\renewcommand{\Re}{\mrm{Re}}
\newcommand{\Bmu}{\bm{\upmu}}
\numberwithin{equation}{section}
\renewcommand{\theequation}{\thesection.\alph{equation}}
\newcounter{everything}
\numberwithin{everything}{section}
\theoremstyle{plain}
\newtheorem{theo}[everything]{\sc Theorem}
\newtheorem{mainthm}{\sc Theorem}
\newtheorem{propo}[everything]{\sc Proposition}
\newtheorem{corol}[everything]{\sc Corollary}
\newtheorem{lem}[everything]{\sc Lemma}
\theoremstyle{remark}
\newtheorem{rem}[everything]{\sc Remark}
\theoremstyle{definition}
\newtheorem*{defi}{\sc Definition}
\renewcommand{\hat}{\widehat}
\renewcommand{\tilde}{\widetilde}
\renewcommand{\bar}{\overline}
\newcommand{\mfr}[1]{\mathfrak{#1}}
\newcommand{\mbf}[1]{\mathbf{#1}}
\newcommand{\msf}[1]{\mathsf{#1}}
\newcommand{\mrm}[1]{\mathrm{#1}}
\newcommand{\mcal}[1]{\mathcal{#1}}
\newcommand{\gen}[1]{\langle{#1}\rangle}
\newcommand{\set}[1]{\left\{{#1}\right\}}
\newcommand{\abs}[1]{\left|#1\right|}
\DeclareMathOperator{\val}{\emph{val}}
\newcommand{\dual}{{\bm{\vee}}}
\newcommand{\dpar}[1]{\!\left(\!\left(#1\right)\!\right)}
\newcommand{\dbN}{\mathbb N}
\newcommand{\dbZ}{\mathbb Z}
\newcommand{\dbF}{\mathbb F}
\newcommand{\dbQ}{\mathbb Q}
\newcommand{\dbR}{\mathbb R}
\newcommand{\dbC}{\mathbb C}
\newcommand{\matr}{\mathbf{M}}
\newcommand{\GL}{\mrm{GL}}
\newcommand{\SL}{\mrm{SL}}
\newcommand{\GU}{\mrm{GU}}
\newcommand{\GG}{\mathsf{G}}
\newcommand{\g}{\msf{g}}
\newcommand{\pp}{\mfr{p}}
\newcommand{\oo}{\mfr{o}}
\newcommand{\PP}{\mfr{P}}
\newcommand{\OO}{\mfr{O}}
\newcommand{\Gal}{\mathsf{Gal}}
\newcommand{\TT}{\mcal{T}}
\newcommand{\CC}{\mbf{C}}
\newcommand{\St}{\mbf{St}}
\newcommand{\bO}{\mbf{O}}
\newcommand{\mat}[1]{\left(\begin{matrix}#1\end{matrix}\right)}
\newcommand{\ceil}[1]{\lceil#1\rceil}
\renewcommand{\ker}{\mathrm{Ker}}
\newcommand{\irr}{\mathrm{Irr}}
\renewcommand{\hom}{\mathrm{Hom}}
\newcommand{\stab}{\textstyle\mathop{Stab}}
\newcommand{\res}{\mathrm{Res}}
\newcommand{\ind}{\mathrm{Ind}}
\newcommand{\Tr}{\mathrm{Tr}}
\newcommand{\Nr}{\mathrm{Nr}}
\newcommand{\Trd}{\mathrm{Trd}}
\newcommand{\Nrd}{\mathrm{Nrd}}
\newcommand{\NNrd}{\msf{Nrd}}
\newcommand{\TTrd}{\msf{Trd}}
\newcommand{\eexp}{\msf{exp}}
\newcommand{\llog}{\msf{log}}
\newcommand{\Char}{{\mathrm{char}}}
\newcommand{\mc}{\bm{j}}
\renewcommand{\setminus}{\smallsetminus}
\newcommand{\inject}{\hookrightarrow}
\newcommand{\project}{\twoheadrightarrow}
\newcommand{\too}{\longrightarrow}
\title[Characters of Norm-One Units of Division Algebras]{Characters of the Norm-One Units of Local Division Algebras of Prime Degree}
\author[S. Shechter]{Shai Shechter}\date{\today}
\email{shais@post.bgu.ac.il}
\address{Department of Mathematics\\ Ben Gurion University of the Negev\\ Beer-Sheva 84105\\Israel}
\thanks{
The author is supported by grant 382/11 of the Israeli Science Foundation. Part of this work was completed with the funding of the NRW-Scholarship Program 2014.}
\keywords{Representation theory, division algebras, Representation growth, $p$-adic analytic groups, Representation zeta function}
\begin{document}%\pagewiselinenumbers
\maketitle\setcounter{tocdepth}{2}
\begin{abstract}
We give an explicit construction of all complex continuous irreducible characters of the group $\SL_1(D)$, where $D$ is a division algebra of prime degree $\ell$ over a local field of odd residual characteristic different from $\ell$. For $\ell$ odd, we show that all such characters of $\SL_1(D)$ are induced from linear characters of compact-open subgroups of $\SL_1(D)$. We also compute an explicit formula for the representation zeta function of $\SL_1(D)$.
 \end{abstract}
\tableofcontents
\section{Introduction}

Let $K$ be a local field with ring of integers $\oo$ of odd residual characteristic $p$, and let $D$ be a central division algebra over $K$. Let $\ell$ denote the degree of $D$, that is $\ell=\left(\dim_K(D)\right)^{1/2}$. Let $L/K$ be an extension of degree $\ell$. The field $L$ embeds in $D$ as a maximal subfield and hence there exists an embedding of $D$ into the full matrix algebra $\matr_{\ell}(L)$. The restriction of the determinant map to the image of $D$ in $\matr_\ell(L)$ gives rise to the reduced norm map, denoted by $\Nrd$. This definition is independent of the choice of extension $L/K$ and of the embedding of $L$ into $D$ (see~\cite{pierce}). The valuation map $\val_K:K\to\dbZ\cup\set{\infty}$ extends uniquely to $D$ via the formula \begin{equation}\label{equa:absolutevalue} \val_D(x)=\frac{1}{\ell}\val_K\left(\Nrd(x)\right)\quad(x\in D).\end{equation}

Let $\SL_1(D)$ be the subgroup of elements of reduced norm $1$ in $D^\times$. In this article we focus on the group $\SL_1(D)$ and give an explicit construction of all complex continuous irreducible characters of $\SL_1(D)$, under the assumption that $\ell$ is a prime number different from $p$. This construction enables us to determine the number of such irreducible characters of $\SL_1(D)$ of any given dimension. In particular, we obtain a formula for the representation zeta function of $\SL_1(D)$. 

Given a topological group $\Gamma$ we write $\irr(\Gamma)$ to denote the set of complex continuous irreducible characters of $\Gamma$. The representation zeta function of $\Gamma$ is defined by the Dirichlet generating function
\begin{equation}\label{equation:zeta}\zeta_\Gamma(s):=\sum_{\chi\in \irr(\Gamma)}\chi(1)^{-s}\quad( s\in\dbC).\end{equation}

The abscissa of convergence of $\zeta_\Gamma(s)$ is the infimum of all $\alpha\in\dbR$ such that the series in~\eqref{equation:zeta} converges on the complex right half-plane $\set{s\in\dbC\mid \Re(s)>\alpha}$. %\BBB{The value of the abscissa of convergence of $\zeta_{\Gamma}(s)$ % is given by $\limsup\tfrac{\log(r_n(\Gamma))}{\log(n)}$, and 
%is finite if and only if $\Gamma$ has \textit{polynomial representation growth }(PRG), i.e. if the sequence $r_n(\Gamma)$ is bounded by a polynomial in $n$.

%In the case where $\Gamma$ is an arithmetic group, a fundamental result by Lubotzky and Martin asserts that $\Gamma$ has polynomial representation growth if and only if it has the \textit{congruence subgroup property} (CSP) \cite{LubotzkyMartin}. The representation zeta function of  } 

In the case where $\Gamma$ is a compact $p$-adic analytic group (for $p>2$), Jaikin-Zapirain has shown that the representation zeta function of $\Gamma$ is of the form
\begin{equation}\label{equation:zetarational}
\zeta_\Gamma(s)=\sum_{i=1}^r n_i^{-s} f_i(p),\end{equation}
where $n_1,\ldots, n_r\in\dbN$ and $f_1(t),\ldots,f_r(t)$ are rational functions with integer coefficients (see \cite{jaikin}). Jaikin-Zapirain's proof relies on an interpretation of the representation zeta function of such groups as a $p$-adic integral, based on Howe's version of the Kirillov orbit method for compact $p$-adic analytic groups, and on a model theoretic proof of the rationality of such integrals.

Avni, Klopsch, Onn and Voll considered the case where $\Gamma$ arises as a principal congruence subgroup of the completion of an arithmetic group over a number field% semisimple algebraic group, defined over a number field
, at a non-archimedean prime, and studied the variation of the representation zeta function of $\Gamma$ along the set of non-archimedean places in \cite{AKOVarithmetic}. In particular, it is shown that the representation zeta function of $\Gamma$ can be expressed by a universal formula in the residual cardinality of the ring of integers. Furthermore, the formula given in \cite[Theorem~A]{AKOVarithmetic} is stable under extending the base field. 
%\BBB{
%In this context, the group $\SL_1(D)$ naturally arises as the completion of semisimple algebraic groups of type $\msf{A}_{\ell-1}$, defined  over a number field, at a non-splitting prime (see~\cite[Chapter~9]{PlatonovRapinchuk}). 

%}

In this global context, the group $\SL_1(D)$ naturally arises as the completion of certain arithmetic groups of type $\msf{A}_{\ell-1}$ at a non-archimedean prime (see~\cite[Chapter~9]{PlatonovRapinchuk}). As such, the representation zeta function of $\SL_1(D)$ occurs as constituent in the Euler product decomposition of the representation zeta function of such groups, as described in \cite[Proposition~1.3]{LarsenLubotzky}. Larsen and Lubotzky considered the group $\SL_1(D)$ as a special case in \textit{loc. cit.} and showed an anomalous behaviour of the abscissa of convergence of $\zeta_{\SL_1(D)}(s)$ in comparison to other groups arising as non-archimedian completions of arithmetic groups (viz. isotropic groups). Specifically, it is shown that the abscissa convergence of $\zeta_{\SL_1(D)}(s)$ is $\frac{2}{\ell}$ if $K$ is of characteristic $0$, whereas the abscissa of convergence of an isotropic group is bounded away from $0$ (see~\cite[Theorems~7.1 and 8.1]{LarsenLubotzky}).

We remark that our computation of the representation zeta function shows that the value of the abscissa of convergence of $\zeta_{\SL_1(D)}$ equals $\frac{2}{\ell}$ in positive odd characteristic as well, for division algebras of prime degree. In fact, the representation zeta function of $\SL_1(D)$ depends only on the cardinality of the residue field of $K$ and admits a unique simple pole at $s=\frac{2}{\ell}$.

%The methods applied in this article are based on Jaikin-Zapirain's computation of the representation zeta function of the group $\SL_2(\oo)$, in the case where $\oo$ is a compact discrete valuation ring, as described in \cite{jaikin}.

%Let $K$ is be local field of residual characteristic $p$ and let $D$ be a central division algebra over $K$. Let $\ell$ denote the degree of $D$, that is $\ell=(\dim_K(D))^{1/2}$. Let $L/K$ be the unique (up to isomorphism) unramified extension of $K$ of degree $\ell$. It is well known (see e.g.~\cite[Chapter 17]{pierce}) that $L$ embeds in $D$ as a maximal subfield and hence there exists an embedding of $D$ as a subalgebra of the matrix algebra $\matr_\ell(L)$. The restriction of the determinant map to the image of $D$ in $\matr_\ell(L)$ gives rise the reduced norm of $D$, denoted by $\Nrd$. Similarly, one defines the reduced trace $\Trd$, by restricting the trace map to the image of $D$. The definitions of $\Nrd$ and $\Trd$ are independent of the choice of embedding of $L$ in $D$.

%In this article we focus on the group $\SL_1(D)$ of elements of reduced norm $1$ in the case where $\ell$ is a prime number different form $p$.
From a local point of view, the group $\SL_1(D)$ is also an important object in the study of the multiplicative group $D^\times$ of $D$ (see \cite{NakayamaMatsushima}). The representations of the group $D^\times$ were studied by Corwin and Howe with a view towards the classification of supercuspidal representations of $\GL_\ell(K)$, for arbitrary $\ell$. In \cite{CorwinHowe}, the authors present an explicit bijection between families of irreducible admisible representations of $D^\times$ of a given level and certain representations of $D_1^\times$, where $D_1\subseteq D$ is a sub-division algebra. Using an inductive argument, this method is used in order to parametrize unitary dual of $D^\times$ by means of distinguished characters of subfield extensions $K\subseteq M\subseteq D$. Koch and Zink investigated this subject further and gave a more complete account, describing an explicit bijection between the set of characters of the absolute Galois group of $K$ of degree dividing $\ell$ and essentially all irreducible representations of $D^\times$ with finite image \cite{KochZink}. For additional information, we refer to \cite[Chapter~5]{BushnellFroelich}. A formula for the number $A_{m,f}$ of irreducible representations of $D^\times$ of level $m$, which split into $f$ irreducible constituents upon restriction to the maximal compact subgroup of $D^\times$, was also computed by H. Koch in~\cite[Section~7]{koch}. 
\subsection{Statement of Main Results}
Our computation of the representation zeta function of $\SL_1(D)$ yields the following.

\begin{mainthm}\label{theo:zeta} Let $\ell$ be a prime number. Let $K$ be a non-archimedean local field of residual cardinality $q$ and odd residual characteristic $p$, different from $\ell$. Put $\iota_\ell(q)=\gcd(q-1,\ell)$ to denote the number of $\ell$-th roots of unity in $K$. Let $D$ be a central division algebra of degree $\ell$ over $K$. Then \[\zeta_{\SL_1(D)}(s)=\frac{\frac{q^\ell-1}{q-1}(1-q^{-\binom{\ell}{2}s})+\left(\frac{q^\ell-1}{\iota_\ell(q)\cdot(q-1)}\right)^{-s}\cdot\iota_\ell(q)^2\cdot (q-1)\cdot\left(\sum_{\lambda=0}^{\ell-2}q^{\lambda(1-\frac{\ell-1}{2}\cdot s)}\right)}{1-q^{(\ell-1)-\binom{\ell}{2}s}}.\]
\end{mainthm}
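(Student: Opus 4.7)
The plan is to use the explicit classification of $\irr(\SL_1(D))$ developed in the preceding sections, stratify characters by the depth at which they are first detected on the congruence filtration, and collect the resulting contributions into a closed-form Dirichlet series.

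First I would decompose $\irr(\SL_1(D)) = \bigsqcup_{m \geq 0} \irr_m(\SL_1(D))$ according to the \emph{level} (smallest $m$ such that the character is trivial on $\SL_1^{m+1}(D)$, using the congruence filtration by $\SL_1^m(D) = \SL_1(D) \cap (1 + \pp_D^m)$). At level zero, the quotient $\SL_1(D)/\SL_1^1(D)$ is the norm-one subgroup of $\dbF_{q^\ell}^\times$, which is cyclic of order $\frac{q^\ell-1}{q-1}$. All such characters are linear, and they account for the leading constant $\frac{q^\ell-1}{q-1}$ in the numerator. For $m \geq 1$, the explicit construction (based on a Heisenberg/orbit-method parametrization available once $p \neq \ell$ and $p$ is odd, since then the correspondence $1 + x \mapsto x$ linearizes $\Nrd$ up to sufficient order) expresses characters in $\irr_m(\SL_1(D))$ as representations induced from linear characters of compact-open subgroups parametrized by a maximal subfield $L \subseteq D$.

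Next I would subdivide $\irr_m(\SL_1(D))$ by the type of the underlying subfield $L$: either $L$ is the (essentially unique) unramified maximal subfield with residue field $\dbF_{q^\ell}$, or $L = K(\pi_D \zeta)$ is totally ramified, in which case the different conjugacy classes are indexed by the $\iota_\ell(q)$-th roots of unity in $K$. For the unramified family, the centralizer in $\SL_1(D)$ of a regular character at level $m$ is a torus of rank $\ell-1$, so the induced characters have dimension $q^{\binom{\ell}{2}}$ times their level-$(m-1)$ counterparts; the number of such characters grows by a factor of $q^{\ell-1}$ per level (the order of the additive quotient $\pp_D^m/\pp_D^{m+1}$ intersected with the trace-zero hyperplane). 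This geometric recursion produces the denominator $1 - q^{(\ell-1) - \binom{\ell}{2}s}$ and, combined with the level-zero count, the first summand $\frac{q^\ell-1}{q-1}(1 - q^{-\binom{\ell}{2}s})$ in the numerator after telescoping. For the ramified family, each of the $\iota_\ell(q)$ conjugacy classes of subfields contributes characters whose dimensions run through the finite geometric progression $\frac{q^\ell-1}{\iota_\ell(q)(q-1)} \cdot q^{\lambda(\ell-1)/2}$ for $\lambda = 0, 1, \ldots, \ell-2$, with multiplicity $\iota_\ell(q)^2 (q-1)$ per $\lambda$; summing and folding into the overall geometric recursion yields the second summand in the numerator.

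The main obstacle I anticipate is the correct handling of the lowest levels, where the orbit structure is exceptional: the level-zero and level-one behaviour do not fit the clean orbit-counting recursion valid for $m \geq 2$, so one must verify that the appropriately-shifted ``boundary'' contributions aggregate with the generic pieces to give exactly the stated numerator. This amounts to checking that the two ``anomalous'' contributions, namely the finite sum of ramified characters and the level-zero linear character count, are precisely what is needed to make the infinite telescoping geometric series recover the closed-form rational expression. Once the linearization is correctly set up (requiring $p \neq \ell$ for $\Nrd/\Trd$ to be well-behaved on each $\pp_D^m$-layer), the remainder is a careful but essentially mechanical manipulation of finite and geometric sums.
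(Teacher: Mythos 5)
Your high-level plan (stratify by level, determine the degree $d^\ell_m(q)$ and count $a^\ell_m(q)$ at each level, then sum the Dirichlet series) is precisely the paper's route: Theorem~\ref{theo:zeta} is deduced from Theorem~\ref{theo:leveldimension} by a formal comparison of the generating function $\sum_{m\ge 0} a^\ell_m(q)\,d^\ell_m(q)^{-s}$ with the stated rational expression. However, there is a genuine conceptual gap in the middle of your argument. You propose to ``subdivide $\irr^m(\SL_1(D))$ by the type of the underlying subfield $L$'' into an unramified and a ramified family living side by side at each level. This is not how the classification works: the type is determined by the level. Via the duality of Corollary~\ref{corol:dualforabelian} and Lemma~\ref{lem:mctraceless}, a character of level $m$ corresponds to an element $Y$ with jump $\mc(Y)=-m$, and by Lemma~\ref{lem:mcproperties} such $Y$ is unramified precisely when $\ell\mid m$ and ramified precisely when $\ell\nmid m$. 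So the unramified contribution comes exactly from levels $m\in\ell\dbN$, and the ramified contribution from the complementary levels; neither family has a ``level-$(m-1)$ counterpart'' within the same family in the way you describe.

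The second missing ingredient is the mechanism behind the count. The crucial fact is that \emph{all} characters of level $m$ have the identical degree $d^\ell_m(q)$ (Theorem~\ref{theo:proveseverything}(1)), which turns the enumeration into a mass-formula: $a^\ell_m(q)=\bigl(\abs{\GG_{m+1}}-\abs{\GG_m}\bigr)/d^\ell_m(q)^2$, where $\abs{\GG_{m+1}}=\zeta_{\GG_{m+1}}(-2)$. Your heuristic ``the number of such characters grows by a factor of $q^{\ell-1}$ per level, the order of $\pp_D^m/\pp_D^{m+1}$ intersected with the trace-zero hyperplane'' does not quite identify this; the growth factor $q^{\ell-1}$ applies per step of size $\ell$ in the level (from $m$ to $m+\ell$), not per unit step. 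Finally, the anticipated ``exceptional orbit structure at low levels'' does not materialize in the paper's argument: only $m=0$ is treated separately (the abelian quotient $\GG_1\cong\SL_1(\dbF_{q^\ell}\mid\dbF_q)$ of order $(q^\ell-1)/(q-1)$), and from $m=1$ on the odd/even-level constructions and the counting identity close uniformly.
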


The specific case where $D$ is a quaternion algebra (i.e. $\ell=2$) over a field of characteristic $0$ was computed in \cite{AKOVsome-p}, by applying the machinery of $p$-adic integration for computing the representation zeta function, developed in \cite{AKOVarithmetic}. Using similar methods, the case where $\ell=3$ was computed in the author's M.Sc. thesis.

The valuation on $D$ endows $\SL_1(D)$ with the structure of a profinite group. In particular, it is a totally disconnected compact group, and the kernel of any complex continuous representation of $\SL_1(D)$ must be open in $\SL_1(D)$. We show that there exists a tight connection between the degree of an irreducible character of $\SL_1(D)$ and the 'depth' of the open subgroup on which it trivializes.

To be more precise, let $\OO$ denote the ring of integers of $D$ and $\PP$ be its maximal ideal (see Section~\ref{section:preliminaries}). We fix a system of open subgroups of $\SL_1(D)$ given by ${G^m:=\SL_1(D)\cap (1+\PP^m)},$ for any $m\in\dbN$. The \textit{level} of a complex continuous representation $\rho$ of $\SL_1(D)$ is defined to be the minimal number $m\in\dbN_0$ such that the restriction of $\rho$ to $G^{m+1}$ is trivial. The level of a character $\chi\in \irr(\SL_1(D))$ is defined to be the level of any representation affording it. For every $m\in\dbN_0$ let $\irr^m(\SL_1(D))$ denote the subset of $\irr(\SL_1(D))$ consisting of characters of level $m$.

Given natural numbers $\ell$ and $m$ we define the following integer-valued functions\nopagebreak
\begin{align}
a^\ell_m(t)&:=\begin{cases}\iota^2_\ell(t)\cdot(t-1)t^{m-\ceil{\frac{m}{\ell}}}&\text{ if }\ell\nmid m\\~\\
\frac{t^\ell-1}{t-1}(t^{\ell-1}-1)\cdot t^{(\ell-1)(\frac{m}{\ell}-1)}&\text{ otherwise},
\end{cases}\label{equa:adefin}\\
\intertext{and} \notag~\\
d^\ell_m(t)&:=\begin{cases}\frac{t^\ell-1}{\iota_\ell(t)\cdot (t-1)}t^{\frac{\ell-1}{2}(m-1)}&\text{ if }\ell\nmid m\\~\\
t^{\frac{\ell-1}{2}m}&\text{ othewise},
\end{cases}\label{equa:ddefin}
\end{align}
where $\iota_\ell(t):=\gcd(\ell,t-1)$. We also put $a_0^\ell(t):=\frac{t^\ell-1}{t-1}$ and $d_0^\ell(t):=1$ for any $\ell\in\dbN$.

\begin{mainthm}\label{theo:leveldimension} Let $\ell$ be a fixed prime and let $K$ be a non-archimedean local field of residual cardinality $q$ and odd residual characteristic $p$, different from $\ell$. Let $D$ be a central division algebra of degree $\ell$ over $K$. 

For every $m\in\dbN_0$, the set $\irr^m(\SL_1(D))$ of complex continuous irreducible characters of $\SL_1(D)$ of level $m$ consists of $a_m^\ell(q)$ distinct characters. Furthermore, all such characters are of degree $d_m^\ell(q)$.
\end{mainthm}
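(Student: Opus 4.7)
My plan is to handle level $0$ by hand and, for $m\geq 1$, to analyse characters of $\SL_1(D)$ via Clifford theory along the normal chain $G^{m+1}\lhd G^1 \lhd \SL_1(D)$, combined with a coadjoint-orbit analysis of characters of $G^1$.

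\textbf{Level $0$.} Characters of level $0$ factor through $\SL_1(D)/G^1$, which by reduction modulo $\PP$ is identified with the kernel of $\Nrd:\dbF_{q^\ell}^\times\to\dbF_q^\times$, a cyclic group of order $(q^\ell-1)/(q-1)=a_0^\ell(q)$. All of its characters are linear, matching $d_0^\ell(q)=1$.

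\textbf{Level $m\geq 1$.} Since $\ell\neq p$ and $p$ is odd, $D/K$ is tame and the pro-$p$ group $G^1$ is saturable, so the truncated logarithm identifies each $G^m/G^{m+1}$ with the $\dbF_q$-subspace of $\PP^m/\PP^{m+1}\cong\OO/\PP\cong\dbF_{q^\ell}$ cut out by the reduced-trace condition. This subspace has $\dbF_q$-dimension $\ell-1$ when $\ell\mid m$ (since $\pp^m\in K^\times$ makes the condition non-trivial) and dimension $\ell$ otherwise (the condition being vacuous). Fix a non-trivial additive character $\tau$ of $K$ of conductor $\oo$; then $\widehat{G^m/G^{m+1}}$ is parametrised by classes $\bar y\in\OO/\PP$ via the pairing $(1+\pp^m x,\bar y)\mapsto\tau(\Trd(x\bar y))$. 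By the Kirillov--Howe orbit method, coadjoint $\SL_1(D)$-orbits of such non-zero classes---the action factoring through the finite quotient $(\OO/\PP)^\times\rtimes\langle\pp\rangle$---are in bijection with $\irr^m(\SL_1(D))$. Each orbit contributes a single irreducible character, induced from a Heisenberg extension on its stabiliser, with degree equal to the orbit size multiplied by $[\mathrm{Stab}\cap G^1:G^m]^{1/2}$. A case analysis on $\ell\mid m$ versus $\ell\nmid m$, together with the consistency identity $\sum_{\chi\in\irr^m}\chi(1)^2=|\SL_1(D)/G^{m+1}|-|\SL_1(D)/G^m|$, will then yield precisely $a_m^\ell(q)$ orbits, each of degree $d_m^\ell(q)$.

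\textbf{Main obstacle.} The crux is the careful tracking of stabilisers, especially in the case $\ell\nmid m$: here conjugation by $\pp$ on $\OO/\PP\cong\dbF_{q^\ell}$ is a Galois automorphism of order $\ell$, and its interaction with the action of the torus $(\OO/\PP)^\times$ and with the characters of $\SL_1(D)/G^1$ determines whether a given $G^1$-orbit merges under $\SL_1(D)$ or splits into multiple Clifford constituents. The arithmetic quantity $\iota_\ell(q)=\gcd(\ell,q-1)$---the number of $\ell$-th roots of unity in $\dbF_q$---governs these splittings and is precisely what gives rise to the factor $\iota_\ell(q)^2(q-1)$ in $a_m^\ell(q)$ and the denominator $\iota_\ell(q)$ in $d_m^\ell(q)$.
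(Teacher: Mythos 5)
The proposal has a fundamental gap in the orbit-counting step, which undermines both the enumeration and the degree formula.

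You claim that the coadjoint $\SL_1(D)$-orbits of non-zero classes in $\widehat{G^m/G^{m+1}}$ are in bijection with $\irr^m(\SL_1(D))$. This cannot be correct: the set $\widehat{G^m/G^{m+1}}$ has at most $q^\ell$ elements, so the number of orbits in it is bounded independently of $m$, whereas $a_m^\ell(q)=\iota_\ell(q)^2(q-1)\,q^{m-\lceil m/\ell\rceil}$ grows without bound as $m\to\infty$. The conjugation action of $G^1$ on $G^m/G^{m+1}$ is trivial (because $[G^1,G^m]\subseteq G^{m+1}$), so the action factors through $\GG_1\cong\SL_1(\dbF_{q^\ell}\mid\dbF_q)$ and every stabiliser contains $G^1$; Clifford theory then tells you only that a level-$m$ character lies over a well-defined $G$-orbit in $\widehat{G^m/G^{m+1}}$, not that this assignment is injective. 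Indeed many level-$m$ characters share the same restriction to $G^m$; they are distinguished at depths between $\lceil(m+1)/2\rceil$ and $m$. The correct version of the Kirillov–Howe correspondence (for the pro-$p$ group $G^1$, then Clifford theory up to $G$) matches $\irr(\GG^1_{m+1})$ with $\GG_{m+1}$-orbits in $(\g^1_{m+1})^\dual\cong\g^{-m}_0$, a much larger space, not with orbits in $(\g^m_{m+1})^\dual$.

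This in turn makes the proposed degree formula problematic. Since every stabiliser in $\SL_1(D)$ of an element of $\widehat{G^m/G^{m+1}}$ contains $G^1$, your expression $[\mathrm{Stab}\cap G^1:G^m]^{1/2}$ equals $[G^1:G^m]^{1/2}$ for every orbit; for $\ell=2$ and $m$ even this gives $q^{(3m-2)/4}$, far from $d_m^2(q)=q^{m/2}$. What is actually needed (and what the paper does, Sections~\ref{section:generalcase}--\ref{section:zeta}) is: restrict $\varphi$ to $G^r$ with $r=\lceil(m+1)/2\rceil$, identify the resulting constituent with an element $y\in\g^{-m}_{-r+1}$ via Lemma~\ref{lem:dualpairing} and Corollary~\ref{corol:dualforabelian}, show that the inertia group of this constituent is $\CC_G(Y)\cdot G^r$ via the stabiliser description of Proposition~\ref{propo:centralizerformula}, and then extend and induce (with Heisenberg lifts in the even-level, $\ell\nmid r$ case to handle the non-abelian subquotient $\GG^r_{2r+1}$). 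Once the degree formula $d_m^\ell(q)$ is established for all levels, the count $a_m^\ell(q)$ does follow from the second-moment identity $\zeta_{\GG_{m+1}}(-2)=\abs{\GG_{m+1}}$ exactly as you propose at the end — but you cannot avoid the intermediate analysis of the restriction to $G^{\lceil(m+1)/2\rceil}$ and its inertia group, which is the technical heart of the argument and is not captured by orbits in $\widehat{G^m/G^{m+1}}$.
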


The analysis leading up to Theorem~\ref{theo:leveldimension} involves an explicit construction of all irreducible characters of $\SL_1(D)$. Namely, given an irreducible character $\chi$ of $\SL_1(D)$ of level $m$, we consider the irreducible constituents of the restriction of $\chi$ to the group $G^{\ceil{\frac{m+1}{2}}}$, and show that $\chi$ can be recovered by means of extension and induction of such a constituent. In particular, in the case where $\ell\ne 2$ we obtain the following.

\begin{mainthm}\label{theo:monomial}Let $D$ be a division algebra of odd prime degree $\ell$ over a local field $K$ of odd residual characteristic $p$, different form $\ell$. Then all irreducible characters of the group $\SL_1(D)$ are monomial. 
\end{mainthm}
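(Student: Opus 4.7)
The plan is to prove the theorem by induction on the level $m$ of an irreducible character $\chi \in \irr(\SL_1(D))$, directly building on the Clifford-theoretic construction already used to establish Theorem~\ref{theo:leveldimension}. The base case $m = 0$ is immediate: the quotient $\SL_1(D)/G^1$ is isomorphic to the kernel of the norm map $\dbF_{q^\ell}^\times \to \dbF_q^\times$, a cyclic group, so every character of level zero is already linear.

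For the inductive step with $m \geq 1$, set $n = \lceil (m+1)/2 \rceil$. The commutator inclusion $[G^n, G^n] \subseteq G^{2n} \subseteq G^{m+1}$ combined with the level condition on $\chi$ forces every constituent $\psi$ of $\chi|_{G^n}$ to factor through the abelian quotient $G^n/G^{m+1}$, so $\psi$ is a linear character of $G^n$. Let $T = \stab_{\SL_1(D)}(\psi)$. Clifford theory then furnishes some $\tilde\psi \in \irr(T)$ restricting to a multiple of $\psi$ with $\chi = \ind_T^{\SL_1(D)} \tilde\psi$, and the task reduces to showing $\tilde\psi$ itself is monomial.

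To describe $T$ I would invoke the additive duality between $G^n/G^{m+1}$ and a quotient of $\PP^{-m}$ obtained from the reduced trace paired with a fixed additive character of $K$; under it $\psi$ corresponds to a coset $\beta + \PP^{-n+1}$ with $\beta \in \PP^{-m}$, and $\SL_1(D)$-conjugation of $\psi$ reflects $D^\times$-conjugation of $\beta$. Since $\ell$ is prime, the centralizer in $D$ of any non-central element is a maximal subfield $K(\beta)$, yielding a clean dichotomy. In the \emph{regular} case $\beta \notin K + \PP^{-n+1}$, the stabilizer $T$ coincides with $(K(\beta)^\times \cap \SL_1(D)) \cdot G^n$, an abelian compact-open subgroup, so $\tilde\psi$ is itself linear and $\chi$ is manifestly monomial. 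In the \emph{singular} case $\beta \in K + \PP^{-n+1}$, $\psi$ is $\SL_1(D)$-invariant, and I would show that it extends to a linear character $\lambda$ of $\SL_1(D)$; the twist $\chi \otimes \lambda^{-1}$ then has strictly smaller level, so the inductive hypothesis applies and monomiality transfers back to $\chi$ after tensoring by $\lambda$.

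The principal obstacle is precisely the singular case: one must certify that the $\SL_1(D)$-invariant linear character $\psi$ genuinely extends to a linear character of the entire group, rather than merely lifting to an irreducible projective representation. This is where the hypothesis $\ell \neq 2$ is essential. For odd $\ell$, the element $-1 \in D^\times$ does not belong to $\SL_1(D)$ (since $\Nrd(-1) = (-1)^\ell = -1$), so $\SL_1(D)$ avoids the quaternion-type $2$-torsion that obstructs linear extensions when $\ell = 2$; the resulting Schur multiplier of the relevant finite quotient $\SL_1(D)/G^n$ vanishes, and the required extension $\lambda$ can be produced explicitly from the reduced-norm presentation of $\SL_1(D)$. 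For $\ell = 2$, by contrast, this extension step fails and genuinely non-monomial representations of quaternion type appear, which is exactly why the theorem excludes that case.
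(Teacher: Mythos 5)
Your proposal has a genuine gap in the case where the level $m$ is even and not divisible by $2\ell$, which is precisely where the paper has to work hardest.

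You take $n = \ceil{(m+1)/2}$, note that $G^n/G^{m+1}$ is abelian, and claim the inertia subgroup $T$ of a linear constituent $\psi$ equals $(K(\beta)^\times \cap \SL_1(D))\cdot G^n$, an \emph{abelian} compact-open subgroup. This is off by one when $m$ is even. Identifying $\psi$ with $\phi_y$ for $y \in \g^{-m}_{-n+1}$ and lifting to $Y$ with $\mc(Y)=-m$, the $G$-equivariance of the duality gives $T = \St_G^{-n+1}(Y) = \CC_G(Y)\cdot G^{(-n+1)-(-m)} = \CC_G(Y)\cdot G^{m/2}$ (using Proposition~\ref{propo:centralizerformula}), which strictly contains $\CC_G(Y)\cdot G^{n}$ since $n = m/2+1$. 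The group $G^{m/2}/G^{m+1}$ is $2$-step nilpotent (its commutators land in $G^m/G^{m+1}$, which is nontrivial), so $T$ is \emph{not} abelian modulo $G^{m+1}$, and Clifford theory only gives you some $\tilde\psi\in\irr(T)$ of degree $q^{(\ell-1)/2}$; you still have to show \emph{that} character is monomial. This is exactly the content of the Heisenberg-lift analysis in Section~\ref{subsect:evenlevel}: one must exhibit a subgroup of $\Gamma^1 = G^{m/2}/G^{m+1}$ lying over a maximal isotropic subspace $\mfr{j}_\theta$ of $\mfr{f}\cong\dbF_{q^\ell}$ that is normalized by $\CC_G(Y)$, glue a linear extension to it, and induce. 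The feasibility of choosing such a $\mfr{j}_\theta$ is where the hypothesis $\ell>2$ actually enters: for $\ell$ odd the radical of the form $\beta_\theta$ is one-dimensional and $\CC_G(Y)$ (being the units in a totally ramified maximal subfield) acts trivially on $\mfr{f}$, so any maximal isotropic subspace is preserved; for $\ell=2$ the radical is trivial and $\CC_G(Y)$ acts transitively on the lines of $\dbF_{q^2}$, so no isotropic line is invariant and the construction genuinely breaks down (Proposition~\ref{propo:radicals}).

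Two smaller issues. First, your ``singular case'' $\beta \in K + \PP^{-n+1}$ never actually arises: if $\chi$ has level $m$, the constituent $\psi$ is nontrivial on $G^m/G^{m+1}$, which forces $\mc(Y)=-m$ and hence $Y$ non-central, so the dichotomy collapses to the regular branch. (For $\SL_1(D)$ of prime degree, \emph{every} positive-level character is regular in the sense of Hill — this is one of the features making the problem tractable.) Second, your heuristic that $\ell\neq 2$ matters because $-1 \notin \SL_1(D)$ is not the mechanism at work; indeed $(-1)^\ell = -1$ means $-1 \notin \SL_1(D)$ for all odd $\ell$, but also for $\ell=2$ the issue is not a central $\pm 1$ but the symplectic geometry on $\mfr{f}$ described above.
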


\subsection{Methods and Organization}
\subsubsection{Analysis of Regular Characters} The mechanism applied in this paper for the study of characters of $\SL_1(D)$ is based on the  study of regular characters of $\GL_n(\oo)$, initiated by G. Hill in \cite{Hill} and applied by A. Jaikin-Zapirain in the computation of the representation zeta function of $\SL_2(\oo)$ \cite[Section~7]{jaikin} and by Krakovski, Onn and Singla to the study of the regular representation zeta function of $\GL_n(\oo)$ and $\GU_n(\oo)$ \cite{UriRoiPooja}. 

In the context of $\GL_n(\oo)$, using a finite-type 'Lie-correspondence' between subquotients of $\GL_n(\oo)$ and $\matr_n(\oo)$, one associates to any positive-level character in $\irr(\GL_n(\oo))$  a conjugacy class of matrices over the residue field of $\oo$. Such a character is called \textit{regular} if the corresponding conjugacy class consists of regular matrices, i.e. matrices whose characteristic polynomial and  minimal polynomial coincide;  see~\cite[Definition~1.1]{UriRoiPooja}. 

Viewed as a matrix group over $K$, any non-central element of $D$ generates a field extension of $K$ of degree $\ell$ and in particular has an irreducible minimal polynomial of degree $\ell$, which coincides with its characteristic polynomial. This, along with an analguous version of the Lie-correspondence for $\SL_1(D)$ (described in Section~\ref{section:chatcorres}), makes the group in question amenable to an analysis similar to \cite{Hill}.

\subsubsection{Similarity Classes}
An important ingredient in applying the above-mentioned mechanism is the analysis of the action of the groups $\OO^\times$ and $\SL_1(D)$ on the congruence quotients $D/\PP^m$. This sort of analysis was carried out for division algebras of arbitrary degree by H. Koch in \cite{koch}, where similarity classes in $D$ modulo powers of the prime $\PP$ were shown to be parametrized by means of Eisenstein polynomial sequence associated to the elements of $D$. Koch also obtains a formula for the centralizer modulo $\PP^m$ of an element $y\in D$ \cite[Theorem~9.1]{koch}. In particular, it is shown that the ring of elements of $\OO$, whose image modulo $\PP^m$ commutes with $y+\PP^m$, is additively generated by sets of the form $\CC_{\OO}(y_j)\cap \PP^{m-j}$ ($j\le m$), where  $y_j\in D$ are distinguished elements which are congruent to $y$ modulo $\PP^j$ (viz. minimal elements of $y+\PP^j$), and $\CC_{\OO}(y_j)$ are their centralizers in $\OO$.

The case where $D$ is of prime degree allows for a much simpler analysis, which we present in Section~\ref{section:similarity} (see Proposition~\ref{propo:centralizerformula}). Furthermore, by the Double Centralizer Theorem, the centralizer of a non-central element of $D$ is equal to the field extension of $K$ generated by this element. In particular, this allows us to describe the centralizer modulo $\PP^m$ of $y$ in $\OO^\times$ as a group of the form $\CC_{\OO^\times}(y)\cdot (1+\PP^{m-j})$ where $j$ is determined by $y$, and $\CC_{\OO^\times}(y)$ is an abelian group.

As can be seen in the construction of characters of $\SL_1(D)$ (Section~\ref{section:generalcase}), the above description of centralizers modulo $\PP^m$ supplies us with the tools required in order to describe for the stabilizers of characters of certain congruence subquotients of $D$. The existence of such a description of stabilizers is one of the factors allowing us to extend all such characters to their stabilizers and consequently to construct all characters of $\SL_1(D)$.

\begin{rem} The restriction on the residual characteristic of $\oo$ ($p\ne 2$) is actually superfluous in the description of centralizers modulo $\PP^m$ and only comes into play in the succeeding sections (see Remark~\ref{rem:even-res-char}). It may very well be that this restriction is merely technical, and that the methods applied here can be modulated to account for the case $p=2$ as well.
\end{rem}
\iffalse
\begin{rem} We remark that the description above of centralizers modulo $\PP^m$ holds in the case where the residual characteristic of $\oo$ equals $2$ as well. Consequently, the description of stabilizers of the above mentioned characters is identical in the case where $p=2$. The main obstacle to applying our method in this case lies in a technical difficulty in establishing the Lie correspondence of congruence subquotients of $D$ in Section~\ref{section:chatcorres}. However, as seen in Section~\ref{section:generalcase}, this obstacle only affects our analysis of characters of even level and it is likely that the methods applied here can be modulated to be suitable for handling the case where $p=2$ as well.% However- attempting to do so is beyond the scope of this paper.
\end{rem}
\fi

\subsubsection{Further Research} A natural continuation of the present study would the case where the degree of $D$ is no longer assumed prime. 

An initial observation to be made in this case is that the analysis of centralizers in $D$ becomes significantly more involved. For example, it is often the case the the centralizer of a non-central element in a division algebra of composite degree is a central division algebra over a field extension of $K$, rather than a subfield of $D$. This fact is evident in the added complexity in Koch's centralizer modulo $\PP^m$ formula and is likely to hinder a direct application of our method in this case.

A possible alternative strategy for the construction of characters of $\SL_1(D)$ may be an inductive one, whereby one considers characters arising from characters of subgroups of the form $\SL_1(D_1)$ where $D_1\subseteq D$ is a division algebra of smaller degree whose center is an extension of $K$. Such an approach was utilized by L. Corwin in  \cite{CorwinUnitary} for the parametrization and construction of representations of $1+\PP$ and of $D^\times$ in arbitrary degree.

%It would also be interesting to find out whether our results hold once the restriction on the residual characteristic of $\oo$ and on the degree of $D$ are removed. Regarding the former, we remark that a large portion of the construction we undertake in this paper is independent on the value of $p$, and that the assumption that $p$ is odd only comes into play in the set-up of the Lie-correspondence of $\SL_1(D)$ and in the construction of characters of even degree. Thus, it seems plausible to conjecture that our results hold if $p=2$ as well, so long as $\ell\ne p$. As to the latter restriction- 
%Another possible extensions of our results would be the study of groups of the form $\SL_n(D)$, of $n\times n$ matrices over $D$ with determinant of reduced norm $1$. It may be interesting to consider the variation of the representation zeta function of the groups $\SL_n(D)$ , as the value $n\cdot \deg(D)$ remains constant.

%\subsubsection{Organization} Section~\ref{section:preliminaries} acts as a preliminary section for this paper, in which we recall some basic structural properties of division algebras over local fields, and present the main players of our analysis. In Section~\ref{section:similarity} we present a description of the action of the group of units $\OO^\times$ and of $\SL_1(D)$ on the quotients $D/\PP^m$, reproving Koch's results in this setting. Section~\ref{section:chatcorres} establishes the representation-theoretic foundations for the construction of characters of $\SL_1(D)$, namely 
\subsubsection{Organization} Section~\ref{section:preliminaries} contains preliminary results, including the basic structural properties of division algebras over a local field. In Section~\ref{section:similarity} we consider the action of the multiplicative group $\OO^\times$ and of $\SL_1(D)$ on the quotients $D/\PP^m$ and compute the cardinality of the orbits of this action. Section~\ref{section:chatcorres} gathers the representation-theoretic tools required in the construction- namely, the Lie Correspondence between congruence subquotients of $\SL_1(D)$ and corresponding finite Lie-rings and the method of Heisenberg lifts for $\SL_1(D)$. Finally, Sections~\ref{section:generalcase} and \ref{section:zeta} contain the explicit construction of irreducible characters of $\SL_1(D)$, and the proofs of Theorems~\ref{theo:zeta}, \ref{theo:leveldimension} and \ref{theo:monomial}.
%\subsubsection{Organization} In Section~\ref{section:preliminaries} we review some basic structural properties of division algebras over a local field and present the main players which occur in our analysis. In Section~\ref{section:similarity} we will present an analysis of the similarity classes of the action the group of units $\OO^\times$ and of $\SL_1(D)$ on the quotients $D/\PP^m$ and compute the cardinality of the orbits of this action. In Section~\ref{section:chatcorres} we present the representation-theoretic tools which we require in the construction. Namely, we establish the existence of a finite-type 'Lie Correspondence' between certain sub-quotients of $G$ and corresponding finite Lie-rings, and overview the method of Heisenberg lifts for $\SL_1(D)$, which we utilize in the construction of irreducible characters. Section~\ref{section:generalcase} contains the explicit construction of irreducible characters of $\SL_1(D)$, as well as the proof of Theorem~\ref{theo:monomial}, and the assertion regarding dimensions in Theorem~\ref{theo:leveldimension}. Lastly, in Section~\ref{section:zeta} we collect the results of the previous sections and prove Theorems~\ref{theo:zeta} and \ref{theo:leveldimension}.

\subsection{Notation} 
We denote the ring of integers and maximal ideal of $K$ by $\oo$ and $\pp$ respectively, and fix $\pi$ to be a uniformizer of $\pp$. Let $q=\abs{\oo/\pp}$ and $p=\Char(\dbF_q)$. For any field $K\subseteq M\subseteq D$ we put $\oo_M$ and $\pp_M$ to denote the ring of integers and maximal ideal of $M$ respectively. The letter $L$ will be reserved to denote a fixed subfield of $D$, such that $L/K$ is unramified. The symbol $\iota:=\iota_\ell(q)=\gcd(q-1,\ell)$ denotes the number of $\ell$-th roots of unity in $K$.

The symbol $\val=\val_D$ denotes the valuation on $D$, normalized so that ${\val(K^\times)=\dbZ}$.

We fix the notation $G:=\SL_1(D)$. As already mentioned, for any $m\in\dbN$ we put $G^m=G\cap (1+\PP^m)$, where $\PP$ is the maximal ideal of $\OO$, the ring of integers of $D$. For any $r,m\in\dbN$ with $r\le m$ we write $\GG_m:=G/G^m$ and $\GG^r_m:=G^r/G^m$.

We also use the notation $\mfr{g}:=\mfr{sl}_1(\OO)$ to denote the Lie ring of elements of reduced trace $0$ in $\OO$ (see Section~\ref{subsect:structure}), with the algebra commutator as a Lie-bracket. For any $m\in\dbZ$ write $\mfr{g}^m:=\set{x\in \PP^m\mid \Trd(x)=0}$. Similar to the group case, we write $\g_m:=\mfr{g}/\mfr{g}^m$ and $\g^r_m:=\mfr{g}^r/\mfr{g}^m$, for all $r,m\in\dbZ$ with $r\ge m$.

For any Galois extension $E/F$ we write $\Gal(E/F)$ for the Galois group and  $\Nr_{E/F}$ and $\Tr_{E/F}$ for the field-norm and trace respectively. We put $\SL_1(E\mid F)$ to denote the group of elements of field norm $1$, and  $\mfr{sl}_1(E\mid F)$ to denote the additive group of elements of field trace $0$.

Given a topological group $\Gamma$ we write $\irr(\Gamma)$ to denote the set of continuous complex irreducible characters of $\Gamma$. If no topology is given on $\Gamma$ we simply include all irreducible complex characters of $\Gamma$. Throughout this paper, all representations and characters are assumed to be complex and continuous. For a finite group $\Gamma$ and characters $\chi,\varphi\in \irr(\Gamma)$, we write \[\left[\chi,\varphi\right]_\Gamma:={\abs{\Gamma}}^{-1}\cdot\sum_{\gamma\in\Gamma}\chi(\gamma)
\bar{\varphi(\gamma^)},\] to denote the standard inner-product of characters of $\Gamma$ (see, e.g.~\cite[Definition~2.16]{Isaacs}).

%\color{red}By some abuse of notations we will write $\rho\in\irr(\Gamma)$ to say that $\rho$ is an element of an equivalence class in $\Gamma$ (i.e. that $\rho$ is an irreducible representation of $\Gamma$). We write $\rho\simeq \rho'$ to say that $\rho$ and $\rho'$ are equivalent representations.\color{black}\footnote{@ Uri: I haven't changed anything here- Do you think this needs rewriting?}

Given an abelian group $\Delta$, we write $\Delta^\dual:=\hom_{\msf{Groups}}(\Delta,\dbC^\times)$ for the Pontryagin dual of $\Delta$. If $\Delta$ is equipped with an additional structure (e.g. if $\Delta$ is a Lie-ring), then $\Delta^\dual$ will simply refer to the Pontryagin dual of the underlying additive group.

Given a group $\Gamma$ and elements $g,h\in\Gamma$ we denote the group commutator $ghg^{-1}h^{-1}$ by $(g,h)$. For a Lie-ring $\Delta$ and $x,y\in\Delta$, we denote the Lie bracket of $\Delta$ by $[x,y]$.
	
\subsection*{Acknowledgements}%~\vspace{2cm}%~
%\color{white}
I wish to express my gratitude to Uri Onn for having introduced me to the research question, and for his guidance and many helpful comments in the process of completing this text. I also wish to thank Benjamin Klopsch for hosting and advising part of this work and Christopher Voll for many helpful discussions and remarks.

%\color{black}
\section{Preliminaries} \label{section:preliminaries}
\subsection{The Structure of Local Division Algebras}\label{subsect:structure} The ring of integers $\OO$ of $D$ consists of all elements of non-negative valuation in $D$ and is a non-commutative discrete valuation ring with a unique maximal ideal $\PP$. The ideal $\PP$ is generated by a uniformizing element $\nu\in\PP$ with $\val(\nu)=\frac{1}{\ell}$. Without loss of generality, we may assume that $\nu^\ell=\pi$. 

As $\nu\notin K$, the map $x\mapsto \nu^{-1}x\nu$ is a non-trivial automorphism of $D$. The unique (up to isomorphism) unramified extension $L/K$ of degree $\ell$ can be embedded in $D$ so that the subfield $L$ is invariant under conjugation by $\nu$. Moreover, in this case, the division algebra $D$ is isomorphic to the cyclic algebra $(L,\sigma,\pi)$, where $\sigma$ is a generator of the Galois group $\Gal(L/K)$. That is to say, the following holds (see \cite[Proposition 15.1.a]{pierce}).

\begin{propo}[Structure of Local Division Algebras] \label{propo:structuretheo}Let $D$ be a division algebra over $K$ of degree $\ell$, as above. 
\begin{enumerate}
\item As $K$-vector spaces $D=\bigoplus_{j=0}^{\ell-1} \nu^j L$,
\item For any $d\in L$ we have that $\nu^{-1}d\nu=\sigma(d)$, and
\item $\nu^\ell=\pi$.
\end{enumerate}
\end{propo}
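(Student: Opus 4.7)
The plan is to establish the three assertions using valuation theory together with the Skolem--Noether and Double Centralizer theorems, taking as input the existence of a uniformizer $\nu\in\PP$ with $\val(\nu)=1/\ell$ and the embedding $L\inject\OO$ invariant under conjugation by $\nu$, both of which have been discussed in the lead-in to the proposition.

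For (2), I would let $\alpha\in\aut_K(D)$ denote conjugation by $\nu$ and set $\sigma:=\alpha|_L\in\Gal(L/K)$. The claim is that $\sigma$ generates $\Gal(L/K)$. Since $\ell$ is prime, this amounts to ruling out $\sigma=\mrm{id}_L$; in that case $\nu$ would lie in the centralizer $C_D(L)$, which by the Double Centralizer Theorem (applied to the maximal subfield $L$ of the central simple $K$-algebra $D$) equals $L$, contradicting $\val(\nu)=1/\ell\notin\dbZ=\val(L^\times)$.

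For (1), I would observe that $\val(\nu^j L\setminus\{0\})\subseteq j/\ell+\dbZ$, and that the cosets $j/\ell+\dbZ$ are distinct in $\frac{1}{\ell}\dbZ/\dbZ$ for $j=0,\ldots,\ell-1$; hence the subspaces $\nu^j L$ pairwise intersect trivially and their sum is direct of $K$-dimension $\ell^2=\dim_K D$, so it exhausts $D$. Assertion (3) then follows by noting that $\nu^\ell$ commutes both with $L$ (since $\sigma^\ell=\mrm{id}_L$) and with $\nu$, so by (1) it lies in $Z(D)=K$; its valuation being $1$ identifies it as a $K$-uniformizer, and since $L/K$ is unramified, $\Nr_{L/K}\colon\oo_L^\times\to\oo^\times$ is surjective, so rescaling $\nu$ by an appropriate $u\in\oo_L^\times$ (which multiplies $\nu^\ell$ by $\Nr_{L/K}(u)$) achieves $\nu^\ell=\pi$ without disturbing the previously established properties.

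The main obstacle I anticipate is the centralizer/valuation step forcing $\sigma\ne\mrm{id}_L$; the rest reduces to careful bookkeeping with valuations together with the surjectivity of the norm in the unramified case.
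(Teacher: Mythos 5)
The paper does not supply a proof of this proposition; it is quoted with a citation to Pierce (Proposition 15.1.a), so there is no in-paper argument to compare yours against. Your self-contained proof is correct and well organized: the Double Centralizer Theorem together with $\val(\nu)=1/\ell\notin\dbZ=\val(L^\times)$ forces the automorphism $\sigma=\alpha|_L$ to be nontrivial and hence (for $\ell$ prime) a generator of $\Gal(L/K)$; the valuation-coset observation yields the decomposition; and the twisted-power identity $(\nu u)^\ell=\nu^\ell\,\Nr_{L/K}(u)$, combined with surjectivity of $\Nr_{L/K}$ on units for unramified extensions, lets you normalize $\nu^\ell=\pi$ without disturbing (1) or (2). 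One phrasing caveat in your argument for (1): pairwise trivial intersection of the subspaces $\nu^j L$ is not by itself sufficient to conclude the sum is direct. The argument that actually delivers directness is already implicit in your valuation observation: in any relation $\sum_j x_j=0$ with $x_j\in\nu^j L$ not all zero, the nonzero summands have pairwise distinct valuations (lying in distinct cosets $j/\ell+\dbZ$), so the unique minimum is attained and the sum has a finite valuation, contradicting that it vanishes. It would be cleaner to assert directness directly from that valuation argument rather than route it through pairwise triviality.
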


In particular, under the identification of $D$ with $\bigoplus_{j=0}^{\ell-1} \nu^j L$, we obtain an embedding of $D$ into the algebra of $\ell\times\ell$ matrices over $L$ under which an element $x\in D$ is mapped to the operator $y\mapsto x\cdot y$ ($y\in D$). In particular , if we write ${x=x_0+\nu x_1+\ldots+\nu^{\ell-1}x_{\ell-1}}$ ($x_0,\ldots, x_{\ell-1}\in L$), then the matrix corresponding to $x$ with respect to the ordered basis $\left(1,\nu,\ldots,\nu^{\ell-1}\right)$ is
\begin{equation}\label{equa:matrixrepn}
\mat{x_0&\pi\sigma(x_{\ell-1})&\pi\sigma^2(x_{\ell-2})&\ldots&\pi \sigma^{\ell-1}(x_1)\\
	x_1&\sigma(x_0)&\pi \sigma^{2}(x_{\ell-1})&\ldots&\pi\sigma^{\ell-1}(x_2)\\
	x_2&\sigma(x_1)&\sigma^2(x_0)&&\pi\sigma^{\ell-1}(x_3)\\
	\vdots	&\ddots&\ddots&&\vdots\\
	x_{\ell-1}&\sigma(x_{\ell-2})&\sigma^2(x_{\ell-3})&\cdots&\sigma^{\ell-1}(x_0)}.
\end{equation}
The reduced norm and trace are given explicitly by the determinant and trace of this matrix, respectively. In particular, one easily sees from \eqref{equa:matrixrepn} that $\Trd(x)=\Tr_{L/K}(x_0)$, and that $\Nrd(x)\equiv\Nr_{L/K}(x_0)\pmod\pp.$

\begin{lem}\label{lem:norm1+P} The reduced norm of an element $x\in 1+\PP$ lies in $1+\pp$. Furthermore, for any $x\in 1+\PP$ there exists a unique $\xi\in 1+\pp$ such that $\xi^\ell=\Nrd(x)$.
\end{lem}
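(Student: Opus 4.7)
The plan is to handle the two assertions independently. The first is an immediate residual computation using the explicit matrix representation in \eqref{equa:matrixrepn}, and the second is a short Hensel-lifting argument enabled by the hypothesis $\ell \ne p$.

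For the first assertion, I will write $x = 1 + z$ with $z \in \PP$ and decompose $z = z_0 + \nu z_1 + \cdots + \nu^{\ell-1} z_{\ell-1}$ according to Proposition~\ref{propo:structuretheo}(1). Since $L/K$ is unramified, $\val$ takes integer values on $L^\times$, so the summands $\nu^j z_j$ lie in pairwise distinct cosets of $\dbZ$ in $\frac{1}{\ell}\dbZ$ and $\val(z)=\min_j\bigl(j/\ell + \val(z_j)\bigr)$. The condition $\val(z) \ge 1/\ell$ therefore forces $z_0 \in \pp_L$ and $z_j \in \oo_L$ for $j \ge 1$, so that $x_0 := 1 + z_0 \in 1 + \pp_L$. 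Invoking the observation stated just after \eqref{equa:matrixrepn}, namely $\Nrd(x) \equiv \Nr_{L/K}(x_0) \pmod{\pp}$, together with the elementary fact that $\Nr_{L/K}(1 + \pp_L) \subseteq 1 + \pp$ (each Galois conjugate of $1 + \pp_L$ lies in $1 + \pp_L$), I conclude that $\Nrd(x) \in 1 + \pp$.

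For the second assertion, I will apply Hensel's lemma to $f(T) = T^\ell - \Nrd(x) \in \oo[T]$. The first part gives $f(1) = 1 - \Nrd(x) \in \pp$, while $f'(1) = \ell$ is a unit in $\oo$ precisely because $\ell$ is invertible modulo $p$. Hensel's lemma then yields a unique $\xi \in \oo$ with $\xi \equiv 1 \pmod{\pp}$ and $\xi^\ell = \Nrd(x)$; this $\xi$ is the required element of $1 + \pp$, and the uniqueness statement in $1 + \pp$ is precisely the uniqueness clause of Hensel.

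I do not anticipate any real obstacle here. The only point requiring any care is the valuation bookkeeping in the first step, which critically uses that $L/K$ is unramified so that the components $\nu^j z_j$ occupy distinct classes of $\val$ modulo $\dbZ$. Both steps otherwise rely only on tools already in the paper: the matrix representation \eqref{equa:matrixrepn} and the standing hypothesis $\ell \ne p$ on the residual characteristic.
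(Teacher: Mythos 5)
Your proof is correct and takes essentially the same route as the paper. For the first assertion you both reduce, via the decomposition from Proposition~\ref{propo:structuretheo}, to the statement $\Nrd(x)\equiv\Nr_{L/K}(x_0)\pmod\pp$ with $x_0\in 1+\pp_L$; your valuation bookkeeping (using that $L/K$ is unramified so the $\nu^j z_j$ live in distinct valuation classes mod $\dbZ$) is a slightly more careful rendition of the same step. For the second assertion the paper argues that $\xi\mapsto\xi^\ell$ is an automorphism of the pro-$p$ group $1+\pp$ because $\ell\ne p$, whereas you invoke Hensel's lemma for $T^\ell-\Nrd(x)$ with $f'(1)=\ell$ a unit; these are two phrasings of the same fact, both resting on $\ell$ being invertible mod $p$, and each yields existence and uniqueness of $\xi\in 1+\pp$ in one line.
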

\begin{proof}
Write $x=x_0+\nu x_1+\ldots+\nu^{\ell-1}x_{\ell-1}$, with $x_0,\ldots,x_{\ell-1}\in L$. Then $x-1\in \PP$ implies that $x_1,\ldots, x_{\ell-1}\in\PP$ and $x_0\in 1+\pp_L$. By \eqref{equa:matrixrepn}, we have that \[\Nrd(x)\equiv\Nr_{L/K}(x_0)\pmod\pp\equiv 1\pmod\pp.\]

For the second assertion, note that since $\ell\ne p$, the map $\xi\mapsto \xi^\ell$ is an automorphism of the pro-$p$ group $1+\pp$. In particular, it is bijective and there exists a unique element $\xi\in1+\pp$ such that $\xi^\ell=\Nrd(\xi)=\Nrd(x).$
\end{proof}
\subsection{Coset Representatives of $\OO/\PP$}
\begin{lem}\label{lem:splittingsequence}Reduction modulo $1+\PP$ induces an exact sequence,
\[1\to 1+\PP\too {\OO^\times}\too\dbF_{q^\ell}^\times\to 1.\]
Furthermore, this sequence has a splitting $\dbF^\times_{q^\ell}\inject {\OO^\times}$, whose image is contained in $\oo_L^\times$ and is given by the set of roots of the polynomial $t^{q^\ell-1}-1$.
\end{lem}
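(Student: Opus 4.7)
The plan is to identify the residue structure of $\OO$ first, then produce the splitting by Hensel lifting roots of unity inside the unramified subfield.

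I would begin by computing $\OO/\PP$ using the decomposition in Proposition~\ref{propo:structuretheo}. Since $\PP = \nu\OO$ and $\OO = \bigoplus_{j=0}^{\ell-1} \nu^j \oo_L$, multiplying by $\nu$ and using $\nu^\ell = \pi$ together with $\pi\oo_L = \pp_L$ (as $L/K$ is unramified) yields $\PP = \pp_L \oplus \bigoplus_{j=1}^{\ell-1}\nu^j\oo_L$. Hence the inclusion $\oo_L\hookrightarrow\OO$ induces an isomorphism $\oo_L/\pp_L \xrightarrow{\sim} \OO/\PP$, and since $L/K$ is unramified of degree $\ell$, $\oo_L/\pp_L \cong \dbF_{q^\ell}$.

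For exactness, any $x \in \OO \setminus \PP$ has $\val(x) = 0$, hence is a unit, so reduction modulo $\PP$ maps $\OO^\times$ onto $(\OO/\PP)^\times = \dbF_{q^\ell}^\times$ with kernel precisely $1+\PP$.

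The main (though still standard) step is the splitting. I would apply Hensel's lemma in the complete local ring $\oo_L$ to the polynomial $f(t) = t^{q^\ell-1}-1$: its reduction modulo $\pp_L$ splits into distinct linear factors in $\dbF_{q^\ell}[t]$, and because $\gcd(q^\ell-1,p)=1$ the derivative $f'(t) = (q^\ell-1)t^{q^\ell-2}$ is a unit at every nonzero residue. Hence each of the $q^\ell-1$ roots of $f$ in $\dbF_{q^\ell}^\times$ lifts uniquely to a root in $\oo_L^\times$, and the set of these lifts forms a cyclic subgroup of $\oo_L^\times\subseteq\OO^\times$ of order $q^\ell-1$ mapping isomorphically onto $\dbF_{q^\ell}^\times$. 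This assignment is the desired splitting, and it visibly lands in $\oo_L^\times$ and coincides with the zero set of $t^{q^\ell-1}-1$ in $\OO$. No step is truly hard here; the only point demanding care is verifying that the identification $\OO/\PP\cong\dbF_{q^\ell}$ coming from the structure theorem really is compatible with the embedding of $\oo_L$, which is what makes the Teichmüller lift live inside $\oo_L^\times$ rather than merely inside $\OO^\times$.
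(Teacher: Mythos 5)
Your proof is correct and follows the same route the paper intends — the paper's proof is the single line ``Follows from Hensel's Lemma,'' and your argument is simply the detailed version: identify $\OO/\PP\cong\oo_L/\pp_L\cong\dbF_{q^\ell}$ via the decomposition in Proposition~\ref{propo:structuretheo}, observe exactness from the valuation, and lift the roots of $t^{q^\ell-1}-1$ into $\oo_L^\times$ using Hensel's lemma (valid since $p\nmid q^\ell-1$).
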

\begin{proof}
Follows from Hensel's Lemma.
\end{proof}
Note that in particular, the image of $\dbF_{q^\ell}^\times$ in $\oo_L^\times$ is invariant under the action of $\Gal(L/K)$.

Let $\TT_L$ be a set of coset representatives of $\OO/\PP$ such that $0\in\TT_L$ and such that $\TT^\times_L:=\TT_L\setminus\set{0}$ is the subgroup of $\oo_L^\times$, given by Lemma~\ref{lem:splittingsequence}. We note that $\TT:=\TT_L\cap K$ is a set of coset representatives for $\oo/\pp$ and $\TT^\times:=\TT\setminus\set{0}$ is isomorphic to $\dbF_{q}^\times$.

Any element $x\in L$ can be uniquely written as a convergent series
\[x=\sum_{j\ge j_0}\pi^j t_j,\]
with $j_0=\val(x)$, $t_j\in \TT_L$ for all $j$'s and $t_{j_0}\ne 0$. Moreover, $x\in K$ if and only if $t_j\in \TT$ for all $j\in\dbN$. By Proposition~\ref{propo:structuretheo}, it follows that any $x\in D$ can be written uniquely as a convergent series 
\begin{equation}\label{equa:nuexpnsion}x=\sum_{j\ge j_0}\nu^j t_j,\end{equation}
where here $j_0=\ell\cdot \val(x)$, $t_j\in \TT$ for all $j$'s and $t_{j_0}\ne 0$. 

In the sequel, we refer to the expansion \eqref{equa:nuexpnsion} as the \textit{$\nu$-expansion of $x$}.

\subsection{Ramified and Unramified Elements}

The structure of subfields within a local division algebra of prime degree allows us to divide the elements of $D$ into three classes, according to the nature of the field extension of $K$ which they generate. Namely, we call an element $y\in D$ \textit{unramified} if $K(y)/K$ is a non-trivial unramified extension and \textit{ramified} if $K(y)/K$ is a non-trivial totally ramified extension. Note that, since the degree of any subfield of $D$ over $K$ must divide $\ell$ (see e.g. \cite[Corollary 13.1.a]{pierce}), it follows that any element which is neither ramified nor unramified is an element of $K$ and hence central in $D$. 

\begin{rem} Note that this type of trichotomy fails if the assumption that $\ell$ is prime is dropped. For a more general classification in the case of arbitrary degree we refer the reader to \cite{koch}.
\end{rem}

The following definition gives a numerical criterion for assigning elements of $D$ into these three classes.
\begin{defi} Let $y\in D$ be given. The \textit{jump of $y$} is defined by
\[\mc(y):=\ell\cdot \sup\set{\val(y-\lambda)\mid\lambda\in K}.\]
\end{defi}
We remark that the value $\mc(y)$ coincides with the first \textit{ramified jump} of $y$, as is defined in \cite[\S~3]{koch}, in the case where $\ell\nmid\mc(y)$. If $y=\sum_{j\ge j_0}\nu^j t_j$ is the $\nu$-expansion of $y$ then $\mc(y)$ equals the minimal index $j$ such that $\nu^j t_j\notin K$. 

%The following Lemma lists some basic properties of $\mc$.
\begin{lem}\label{lem:mcproperties}For $y\in D$ the following hold.
\begin{enumerate}
\item For any $\lambda\in K$,\: $\mc(y+\lambda)=\mc(y)$.
\item For any $g\in D^\times$, $\mc(gyg^{-1})=\mc(y)$.
\item An element $y\in D$ central if and only if $\mc(y)=\infty$.
\item An element $y\in D$ is ramified if and only if $\mc(y)<\infty$ and $\ell\nmid\mc(y)$.
\item An element $y\in D$ is unramified if and only if $\mc(y)<\infty$ and $\ell\mid \mc(y)$. 
\end{enumerate}
\end{lem}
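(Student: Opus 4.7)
Parts (1)--(3) should follow immediately from the definition. For (1), I would reindex $\mu \mapsto \mu - \lambda$ in the supremum defining $\mc(y+\lambda)$ to recover the supremum defining $\mc(y)$. For (2), since any $\lambda \in K$ is central, one has $gyg^{-1} - \lambda = g(y-\lambda)g^{-1}$, and $\val$ is invariant under conjugation because $\Nrd$ is multiplicative; so the two suprema range over the same set of valuations. For (3), if $y \in K$ then choosing $\mu = y$ gives $\val(y-\mu) = \infty$; conversely, $\mc(y) = \infty$ produces $\mu_n \in K$ with $\val(y-\mu_n) \to \infty$, so $y = \lim \mu_n$ lies in the closed subspace $K \subseteq D$.

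For (4) and (5), I would handle both simultaneously by a case analysis on whether $\ell$ divides $m := \mc(y)$. Assume $y$ is non-central, so by (3) we have $m < \infty$. Using (1), I would first normalize $y$ by subtracting from it the (finite) sum of those terms $\nu^j t_j$ in its $\nu$-expansion with $j < m$; by the characterization of $\mc(y)$ as the smallest index with $\nu^j t_j \notin K$, this truncation lies in $K$, and after subtraction we may assume $\val(y) = m/\ell$ with leading term $\nu^m t_m \notin K$.

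In the case $\ell \nmid m$, the valuation $m/\ell$ lies outside $\dbZ = \val(K^\times)$, so the ramification index of $K(y)/K$ is at least $\ell$. Since $[K(y):K]$ is a divisor of the prime $\ell$ and $y \notin K$, this degree equals $\ell$ and the extension is totally ramified, so $y$ is ramified. In the case $\ell \mid m$, write $m = k\ell$; then $\nu^m = \pi^k \in K$, and $\nu^m t_m \notin K$ forces $t_m \in \TT_L \setminus \TT$. Hence $\pi^{-k}y \in \OO^\times$ has residue in $\dbF_{q^\ell} \setminus \dbF_q$, so the residue field of $K(y) = K(\pi^{-k}y)$ strictly contains $\dbF_q$ and thus equals $\dbF_{q^\ell}$. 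Residue degree $\ell$ together with $[K(y):K] = \ell$ forces $K(y)/K$ to be unramified, so $y$ is unramified.

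The bookkeeping of the $\nu$-expansion and the initial normalization via (1) is the only place that requires care, and the main potential obstacle is keeping straight the distinction between $\TT$ and $\TT_L$ in identifying precisely when a term $\nu^j t_j$ lies in $K$; once that identification is in place, the case split on $\ell \mid m$ versus $\ell \nmid m$ is exactly the standard dichotomy between unramified and totally ramified extensions of prime degree.
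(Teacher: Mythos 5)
Your proof is correct and follows essentially the same route as the paper: reduce (4) and (5) to the ``if'' directions via the trichotomy central/ramified/unramified, normalize $y$ by subtracting an element of $K$ so that $\val(y) = \mc(y)/\ell$, and then read off totally ramified versus unramified from the fractional valuation (when $\ell \nmid \mc(y)$) or the residue in $\dbF_{q^\ell}\setminus\dbF_q$ (when $\ell \mid \mc(y)$), using that $[K(y):K]$ divides the prime $\ell$. The only cosmetic difference is that you construct the normalizing element $\lambda \in K$ explicitly as the truncated $\nu$-expansion, whereas the paper simply invokes the definition of $\mc$ to assert its existence.
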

\begin{proof}
Assertions (1),(2) and (3) are clear from the definition. Additionally, note that by the trichotomy discussed above, in order to show the equivalences in (4) and (5), it would suffice to prove the \textit{if} implication in both assertions. We start by proving this implication in (4).

Suppose $\ell\nmid \mc(y)$, and in particular $y\notin K$ and the extension $K(y)/K$ is non-trivial. By definition of $\mc$, there exists an element $\lambda\in K$ such that $\val(y-\lambda)=\frac{1}{\ell}\mc(y)\notin \dbZ$ and hence $K(y)/K$ is ramified. Since $\abs{K(y):K}=\ell$, there are no non-trivial sub-extensions of $K$ in $K(y)$ and hence $K(y)/K$ is totally ramified and $y$ is ramified.

Lastly, to prove (5), note that if $\ell\mid\mc(y)$, then up to subtracting an element of $K$ from $y$ and multiplying by $\pi^{-\val(y)}\in K$, we may assume that $y\in\OO^\times\setminus\oo^\times$. Thus $\ell\mid\mc(y)$ if and only if the reduction modulo $\PP$ of $y$ is not an element of $\dbF_q$ and hence the residual degree $f(K(y)/K)$ is greater then $1$. It follows, since $f(K(y)/K)\cdot e(K(y)/K)=\abs{K(y):K}=\ell$, that $e(K(y)/K)=1$ and hence $y$ is unramified.
\end{proof}

The following is a useful property of unramified elements.
\begin{lem}\label{lem:conjtoL} Let $y\in D$ be unramified. Then there exists $g\in {\OO^\times}$ such that $gyg^{-1}\in L$.
\end{lem}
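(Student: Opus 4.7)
The plan is to invoke the Skolem--Noether theorem to conjugate $y$ into $L$ by some element of $D^\times$, and then to refine this element to lie in $\OO^\times$ using the fact that $L$ is stable under conjugation by powers of the uniformizer $\nu$.

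First, I would dispose of the trivial case $y\in L$ (taking $g=1$), and otherwise observe that, since $y$ is unramified, $K(y)/K$ is a non-trivial unramified extension of degree dividing $\ell$; as $\ell$ is prime, this forces $\abs{K(y):K}=\ell$. By the uniqueness of the unramified extension of $K$ of a given degree, there is a $K$-algebra isomorphism $\phi\colon K(y)\to L$. Set $y':=\phi(y)\in L$; then $y$ and $y'$ have the same minimal polynomial over $K$.

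Next, applying the Skolem--Noether theorem to the two $K$-algebra embeddings $K(y)\inject D$ (the inclusion) and $K(y)\xrightarrow{\phi} L\inject D$, one obtains an element $h\in D^\times$ with $hyh^{-1}=y'\in L$. This is the standard conjugacy-of-embeddings result for simple subalgebras of a central simple algebra.

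Finally, to produce a conjugator in $\OO^\times$, I would use the fact that $\nu$ is a uniformizer of the (non-commutative) discrete valuation ring $\OO$, so that $D^\times=\bigsqcup_{k\in\dbZ}\nu^k\OO^\times$. Write $h=\nu^k u$ with $u\in\OO^\times$; then $hyh^{-1}=\nu^k(uyu^{-1})\nu^{-k}\in L$. By Proposition~\ref{propo:structuretheo}(2), conjugation by $\nu$ preserves $L$ (acting as $\sigma$), so conjugation by $\nu^{-k}$ sends $L$ into $L$, giving $uyu^{-1}=\nu^{-k}(hyh^{-1})\nu^{k}\in L$. Thus $g:=u\in\OO^\times$ has the desired property. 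No genuine obstacle arises here; the only nontrivial input is Skolem--Noether, and the rest is a direct use of the structure theorem.
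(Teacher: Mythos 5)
Your proof is correct and follows essentially the same route as the paper's: reduce to the case $K(y)\cong L$, apply Skolem--Noether to get a conjugator $h\in D^\times$, and then strip off the $\nu$-power part of $h$ using $D^\times=\bigsqcup_k\nu^k\OO^\times$ together with the $\nu$-stability of $L$ from Proposition~\ref{propo:structuretheo}(2). The paper phrases the adjustment as multiplying $u$ by $\nu^{-\ell\val(u)}$ rather than factoring $h=\nu^k u$, but the two are the same observation.
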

\begin{proof}
As the extension $K(y)/K$ is unramified of order $\ell$, it is isomorphic over $K$ to $L$. Let $f:K(y)\to L$ be a $K$-isomorphism. By the Skolem-Noether Theorem, there exists $u\in D^\times$ such that $f(x)=uxu^{-1}$ for all $x\in K(y)$. Set $g=\nu^{-\val(u)}u\in {\OO^\times}$. Then \[gyg^{-1}=\nu^{-\val(u)}\left(uyu^{-1}\right)\nu^{\val(u)}=\sigma^{\val(u)}\left(f(y)\right)\in L.\]
\end{proof}

\begin{corol}\label{corol:G1conjtoL} Let $y\in D$ be unramified. Then there exist $h\in G^1$ and $y'\in L$ such that $y=hy'h^{-1}.$
\end{corol}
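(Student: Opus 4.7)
The plan is to take the element $g\in\OO^\times$ provided by Lemma~\ref{lem:conjtoL}, massage it modulo $1+\PP$ using the Teichm\"uller splitting of Lemma~\ref{lem:splittingsequence}, and then rescale by a central unit using Lemma~\ref{lem:norm1+P} so that the resulting conjugator lies in $G^1$.

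Concretely, by Lemma~\ref{lem:conjtoL} there exists $g\in\OO^\times$ such that $y':=gyg^{-1}\in L$. By Lemma~\ref{lem:splittingsequence}, the short exact sequence $1\to 1+\PP\to\OO^\times\to \dbF_{q^\ell}^\times\to 1$ splits via $\TT_L^\times\subseteq \oo_L^\times$, so we may write $g=t\,u$ with $t\in\TT_L^\times\subseteq L^\times$ and $u\in 1+\PP$. Since both $t$ and $y'$ lie in the commutative field $L$, conjugation by $t$ fixes $y'$, and hence
\[
y\;=\;g^{-1}y'g\;=\;u^{-1}\,t^{-1}y't\,u\;=\;u^{-1}y'u.
\]
Thus $u^{-1}\in 1+\PP$ already conjugates $y'$ to $y$; it only remains to upgrade it to an element of reduced norm $1$.

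By Lemma~\ref{lem:norm1+P}, $\Nrd(u^{-1})\in 1+\pp$, and there exists a unique $\xi\in 1+\pp$ with $\xi^{\ell}=\Nrd(u^{-1})$. Set $h:=\xi^{-1}u^{-1}$. Since $\xi^{-1}\in 1+\pp\subseteq 1+\PP$, we have $h\in 1+\PP$, and
\[
\Nrd(h)\;=\;\xi^{-\ell}\,\Nrd(u^{-1})\;=\;1,
\]
so $h\in G^1$. Because $\xi\in K$ is central in $D$, it commutes with $y'$, so
\[
h\,y'\,h^{-1}\;=\;\xi^{-1}u^{-1}y'u\,\xi\;=\;u^{-1}y'u\;=\;y,
\]
which completes the proof.

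The only conceptual step is the rescaling in the last paragraph, which relies on $\ell\ne p$ to guarantee the existence of the $\ell$-th root $\xi$ in $1+\pp$; everything else is a bookkeeping consequence of the splitting in Lemma~\ref{lem:splittingsequence} and the commutativity of $L$.
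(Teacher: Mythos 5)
Your proof is correct and takes essentially the same route as the paper: both extract a Teichm\"uller part $t\in\TT_L^\times$ from the conjugator, observe that $t$ commutes with $y'$ because both lie in $L$, and then rescale the remaining pro-$p$ factor by a central $\ell$-th root of its reduced norm (via Lemma~\ref{lem:norm1+P}) to land in $G^1$. The only cosmetic difference is that you factor $g=tu$ with the Teichm\"uller part on the left, whereas the paper reads off $t_0$ from the $\nu$-expansion and uses $gt_0^{-1}\in 1+\PP$; since $\TT_L^\times$ normalizes $1+\PP$, these are interchangeable.
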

\begin{proof}
Let $g\in {\OO^\times}$ and $y'\in L$ be such that $y=gy'g^{-1}$, as given by Lemma~\ref{lem:conjtoL}. Let $\sum_{j\ge 0}\nu^j t_j$ be the $\nu$-expansion of $g$. Then $t_0\in \TT_L^\times$ and $gt_0^{-1}\in 1+\PP.$ By Lemma~\ref{lem:norm1+P} there exists an element $\xi\in 1+\pp$ such that $\xi^\ell=\Nrd(gt_0^{-1})$. Put $h:=\xi^{-1} gt_0^{-1}\in (1+\PP)\cap G=G^1$. As $\xi$ is central and $y't_0=t_0y'$, we have that \[hy'h^{-1}=g(t_0^{-1}y't_0)g^{-1}=gy'g^{-1}=y,\] as required.
\end{proof}
\section{Similarity Classes}\label{section:similarity}
%In what follows, we require two main technical tools. The first of which is an explicit correspondence between the set of irreducible characters of the subgroups $G^r$ and orbits under the conjugation action of $G$ inside finite Lie rings which arise as subquotients of $D$. The second is a description of the stabilizers of such characters under the conjugation action of $G$. %The results of this section are required in order to address the latter.

Our main objective in this section is to present a general description of the similarity classes of elements of $D$ under the conjugation action of $\OO^\times$, from which we will collect data regarding the conjugation action of $G$ on subquotients of $D $. Let $\Delta\subseteq D^\times$ be a subgroup. Given $m\in\dbZ$, the \textit{$m$-th similarity class} of an element $y\in D$ under $\Delta$ is the orbit in $D/\PP^m$ of its image modulo $\PP^m$, under the conjugation action of $\Delta$. That is
\begin{equation}\label{equation:simclassdefi}\bO^m_\Delta=\set{ gyg^{-1}+\PP^m\mid g\in \Delta}\subseteq D/\PP^m.\end{equation}
The \textit{$m$-th congruence stabilizer} of $y$ under $\Delta$ in $\OO^\times$ is defined by 
\begin{equation}\label{equation:congstabdefi}\St^m_{\Delta}(y):=\set{g\in {\Delta}\mid gyg^{-1}-y\in \PP^m}.\end{equation}
%Similarly, for any subgroup $\Delta\subseteq \OO^\times$ we define the $m$-th congruence stabilizer of $y$ in $\Delta$ by \[\St_{\Delta}^m(y):=\St_{\OO^\times}^m\cap \Delta,\]and the $m$-th similarity class of $y$ under $\Delta$ as its pre-image under the conjugation action of $\Delta$. 
Of special interest to us are the cases where $\Delta$ is $\OO^\times,G$ or $G^1$.

As already mentioned, the similarity classes of $D$ have been studied in the past by H. Koch in \cite{koch}, where a necessary and sufficient condition for two elements $y$ and $y'$ to lie in the same similarity class is established. Koch's result is based on the method of Eisenstein polynomial sequence and is valid for division algebras of arbitrary degree. Moreover, a formula for the centralizer modulo $\PP^m$ of an element $y\in D$ is presented in \cite[Theorem~9.1]{koch}. 

In the language of Koch, any element $y\in D$ can be associated to a sequence of \textit{minimal elements} $y_j\in D$ ($j\in\dbZ$), which are congruent to $y$ modulo $\PP^j$ and generate the smallest possible field extension of $K$. The centralizer modulo $\PP^m$ of $y$ is then shown to be additively generated by subsets of the form $\CC_{\OO}(y_j)\cap\PP^{m-j}$, where the indices $j$ occurring arise as \textit{ramified jumps} (see \cite[(3.1)]{koch}) of the element $y$.

In the case where $D$ is of prime degree is significantly more tractable, as an element $y\in D$ can have at most one ramified jump in this case. In particular, it enables us to present a much simpler formula for the congruence stabilizers of $y$, whose proof avoids much of the technical difficulties and terminology of \cite{koch} and is presented here for the sake of completeness. We obtain the following.
%A necessary and sufficient condition for $y$ and $y'$ to lie in the same $m$-th similarity class was given by Koch in \cite{koch}, using the method of Eisenstein polynomial sequences, for the case where $D$ is a division algebra of arbitrary degree. Moreover, a formula for the $m$-th similarity class of an element $y\in D$ is computed in Section 9 of \textit{loc. cit.}, in the same generality. 

%The case where $D$ is of prime degree allows for a much simpler computation, which we present here. Namely, we prove the following proposition, which the reader could recognize as a special case of \cite[Theorem~9.1]{koch}.

\begin{propo}\label{propo:centralizerformula} Let $y\in D$ and let $m\in \dbZ$ be given.
\begin{enumerate}
\item If $\mc(y)\ge m$ then $\St^m_{\OO^\times}(y)={\OO^\times}$.
\item If $\mc(y)<m$ then $\St_{\OO^\times}^m(y)=\CC_{\OO^\times}(y)\cdot ({1+\PP}^{m-\mc(y)}).$
\end{enumerate}
Here $\CC_{\OO^\times}(y)$ is the centralizer of $y$ in ${\OO^\times}$.
\end{propo}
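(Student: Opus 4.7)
The proof splits along the two assertions. For (1), the hypothesis $\mc(y)\geq m$ furnishes $\lambda \in K$ with $y-\lambda \in \PP^m$; since $\PP^m$ is a two-sided $\OO$-ideal, every $g \in \OO^\times$ satisfies $gyg^{-1}-y = g(y-\lambda)g^{-1}-(y-\lambda) \in \PP^m$, and so $\St^m_{\OO^\times}(y) = \OO^\times$.

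For (2), set $j := \mc(y)$; by Lemma~\ref{lem:mcproperties} we have $y\notin K$, so $F := K(y)$ is a proper degree-$\ell$ subfield of $D$. The Double Centralizer Theorem identifies $\CC_D(y) = F$, and hence $\CC_{\OO^\times}(y) = \oo_F^\times$. Fix $\lambda \in K$ with $\val(y-\lambda) = j/\ell$ and write $y-\lambda = \nu^j t_j + y''$ with $t_j \in \TT_L\setminus\{0\}$ and $y'' \in \PP^{j+1}$. The basic valuation bound
\[\val([w,y]) \;=\; \val\bigl(w(y-\lambda)-(y-\lambda)w\bigr) \;\geq\; \tfrac{m-j}{\ell}+\tfrac{j}{\ell} \;=\; \tfrac{m}{\ell} \qquad (w \in \PP^{m-j})\]
together with $cyc^{-1}=y$ for every $c\in \oo_F^\times$ gives the inclusion $\oo_F^\times(1+\PP^{m-j}) \subseteq \St^m_{\OO^\times}(y)$ upon expanding $(1+w)y(1+w)^{-1}-y = [w,y](1+w)^{-1}$.

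For the reverse inclusion I would prove by induction on $m-j$ the uniform statement that every $g\in \OO$ with $[g,y]\in \PP^m$ admits a decomposition $g = c + w$ with $c\in \oo_F$ and $w\in \PP^{m-j}$; this implies the target inclusion because $m-j\geq 1$ forces $c\equiv g\pmod{\PP}$, so $c\in \oo_F^\times$ and $c^{-1}g\in 1+\PP^{m-j}$. The base case $m=j$ is vacuous; in the inductive step, applying the hypothesis at level $m-1$ gives $g = c' + w'$ with $w' \in \PP^{m-1-j}$, and $[c',y]=0$ reduces the problem to the following key lemma: for every $k\geq 0$ and $w \in \PP^k$, one has $[w,y]\in \PP^{k+j+1}$ if and only if $w \in (\oo_F\cap \PP^k) + \PP^{k+1}$.

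The $(\Leftarrow)$ direction of the key lemma is immediate from the valuation bound above and $[c,y]=0$ for $c\in \oo_F$. For $(\Rightarrow)$, the bracket descends to an $\oo$-linear map $\bar\psi_y^{(k)}\colon \PP^k/\PP^{k+1} \to \PP^{k+j}/\PP^{k+j+1}$ between $\dbF_q$-vector spaces of dimension $\ell$; taking a representative $w \equiv \nu^k\bar b \pmod{\PP^{k+1}}$ with $\bar b \in \TT_L$ and iterating the commutation relation $d\nu = \nu\sigma(d)$ of Proposition~\ref{propo:structuretheo}, one computes the closed form
\[\bar\psi_y^{(k)}(\nu^k\bar b) \;\equiv\; \nu^{k+j}\bigl(\sigma^j(\bar b)\bar t_j - \sigma^k(\bar t_j)\bar b\bigr) \pmod{\PP^{k+j+1}}.\]
The kernel of this $\dbF_q$-linear endomorphism of $\dbF_{q^\ell}$ is then matched, case by case, against $(\oo_F\cap \PP^k + \PP^{k+1})/\PP^{k+1}$: when $\ell\mid j$ (so $F/K$ is unramified and $\bar t_j\in\dbF_{q^\ell}\setminus\dbF_q$) one has $\sigma^j=\mrm{id}$ and the kernel is all of $\dbF_{q^\ell}$ when $\ell\mid k$ and $\{0\}$ otherwise; when $\ell\nmid j$ (so $F/K$ is totally ramified) the equation rearranges to $\bar b^{q^j-1} = \bar t_j^{q^k-1}$, and since $(q-1)\mid(q^k-1)$ the right-hand side is automatically a $(q-1)$-st power and hence lies in the image of $b\mapsto b^{q^j-1}$, whose fibres have size $|\dbF_q^\times|=q-1$, so the kernel has exactly $q = |\oo_F/\pp_F|$ elements. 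The main obstacle is precisely this final kernel computation, since the answer depends in an essential way on the ramification type of $F/K$ and a uniform, case-free treatment seems out of reach.
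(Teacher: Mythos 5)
Your proof is correct, and it takes a genuinely different route from the paper's. Part~(1) is the same. For part~(2), where the paper proves the reverse inclusion by a direct induction on $m$ within the group $\OO^\times$ itself, splitting into the ramified and unramified cases at the outset and deploying ad hoc devices in each (commutativity of $\OO^\times/(1+\PP)$ in the base case; the explicit uniformizer $\tilde\nu = \pi^\alpha y^\beta$ of $\oo_{K(y)}$ in the ramified inductive step; an explicit $\nu$-expansion computation after conjugating into $L$ in the unramified step), you instead linearize: you prove by induction the additive statement that $[g,y]\in\PP^m$ forces $g\in\oo_F+\PP^{m-j}$, reduce each inductive step to computing the kernel of a single $\dbF_q$-linear graded piece $\bar\psi_y^{(k)}\colon\PP^k/\PP^{k+1}\to\PP^{k+j}/\PP^{k+j+1}$, and then match kernel sizes against $(\oo_F\cap\PP^k+\PP^{k+1})/\PP^{k+1}$. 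The case split is thereby pushed down into a single finite-field kernel computation rather than running through the whole argument. This buys you a cleaner top-level structure and avoids constructing $\tilde\nu$ or conjugating into $L$; what you pay is that the kernel must still be counted separately according to ramification type, as you yourself note. Two small remarks: the base case $m=j$ is trivially true (take $c=0$, $w=g$) rather than vacuous, since every $g\in\OO$ satisfies $[g,y]\in\PP^j$; and your passage from the additive decomposition to the multiplicative one correctly uses $m-j\geq 1$ to force $c\equiv g\pmod\PP$ and hence $c\in\oo_F^\times$, which is the point where the restriction to units re-enters.
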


%\BBB{
%Proposition~\ref{propo:centralizerformula} affords a rather elementary argument, which eludes use of much of the terminology and technical tools necessary in Koch's proof. In view of this, for the sake of completeness, we present the proof of Proposition~\ref{propo:centralizerformula} here.
%}

We also compute the index of the $m$-th congruence stabilizer to obtain the number of ${\OO^\times}$-conjugates of $y+\PP^m$ in $D/\PP^m$. 
\begin{propo}\label{propo:indexcent}Let $y\in D$ and let $m\in\dbZ$ be given. Assume $\mc(y)<m$. Then
\begin{enumerate}
\item If $y$ is unramified, then the $m$-th similarity class of $y$ under $\OO^\times$ consists of $q^{\ell\left(m-\mc(y)-\ceil{\frac{m-\mc(y)}{\ell}}\right)}$ distinct elements.
\item Otherwise, if $y$ is ramified, then the $m$-th similarity class of $y$ under ${\OO^\times}$ consists of $\frac{q^\ell-1}{q-1}q^{(\ell-1)(m-\mc(y)-1)}$ distinct elements.
\end{enumerate}
\end{propo}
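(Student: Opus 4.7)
The plan is to deduce the orbit sizes from the stabilizer formula already established in Proposition~\ref{propo:centralizerformula} via the orbit--stabilizer principle. Writing $j := \mc(y)$, the hypothesis $j < m$ yields $\St^m_{\OO^\times}(y) = \CC_{\OO^\times}(y) \cdot (1+\PP^{m-j})$, so that the orbit of $y + \PP^m$ in $D/\PP^m$ under $\OO^\times$ has cardinality
\[[\OO^\times : \CC_{\OO^\times}(y)\cdot (1+\PP^{m-j})] = \frac{[\OO^\times : 1+\PP^{m-j}]}{[\CC_{\OO^\times}(y) : \CC_{\OO^\times}(y)\cap (1+\PP^{m-j})]},\]
where I rewrite the denominator using the second isomorphism theorem.

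The numerator is independent of $y$. Filtering $\OO^\times$ through the principal congruence subgroups and using Lemma~\ref{lem:splittingsequence}, together with the fact that each successive layer $(1+\PP^i)/(1+\PP^{i+1})$ is isomorphic to the $1$-dimensional $\dbF_{q^\ell}$-vector space $\PP^i/\PP^{i+1}$, gives $[\OO^\times : 1+\PP^{m-j}] = (q^\ell - 1)\,q^{\ell(m-j-1)}$. For the denominator, the hypothesis $\mc(y) < m < \infty$ forces $y \notin K$ by Lemma~\ref{lem:mcproperties}, which lets me invoke the Double Centralizer Theorem to identify $\CC_D(y) = K(y)$, a subfield of $D$ of degree $\ell$ over $K$, and hence $\CC_{\OO^\times}(y) = \oo_{K(y)}^\times$.

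The only point requiring genuine care is the identification of $\PP^{m-j} \cap \oo_{K(y)}$ as a power of $\pp_{K(y)}$; this hinges on the relation $\val_{K(y)} = e(K(y)/K) \cdot \val_D|_{K(y)}$ together with the integrality of $\val_{K(y)}$. Lemma~\ref{lem:mcproperties} dictates the two cases: in the unramified case $e(K(y)/K) = 1$ and the residue field of $K(y)$ has $q^\ell$ elements, so $\PP^{m-j} \cap \oo_{K(y)} = \pp_{K(y)}^{\ceil{(m-j)/\ell}}$ and the denominator equals $(q^\ell - 1)\,q^{\ell(\ceil{(m-j)/\ell} - 1)}$; in the totally ramified case $e(K(y)/K) = \ell$, the residue field has $q$ elements, and $\PP^{m-j} \cap \oo_{K(y)} = \pp_{K(y)}^{m-j}$, yielding denominator $(q-1)\,q^{m-j-1}$. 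Substituting into the displayed ratio and simplifying will produce the two formulas claimed in (1) and (2). The chief (albeit minor) obstacle is keeping the two valuation normalizations straight in the ramified versus unramified bookkeeping that produces the ceiling function in the exponent of case (1).
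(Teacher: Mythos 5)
Your proposal is correct and follows essentially the same route as the paper: both deduce the orbit size via orbit--stabilizer from Proposition~\ref{propo:centralizerformula}, rewrite the index by the second isomorphism theorem, and then compute the numerator $[\OO^\times:1+\PP^{m-j}]$ and the denominator $[\CC_{\OO^\times}(y):\CC_{\OO^\times}(y)\cap(1+\PP^{m-j})]$ separately in the unramified and ramified cases, using that $\CC_{\OO^\times}(y)=\oo_{K(y)}^\times$. The only cosmetic difference is that the paper handles the unramified case by first conjugating $y$ into $L$ via Lemma~\ref{lem:conjtoL}, whereas you argue directly with the valuation on $K(y)$; the two are equivalent and produce the same $\lceil(m-j)/\ell\rceil$ bookkeeping.
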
 

Finally, we make the transition from similarity classes under ${\OO^\times}$ to similarity classes under $G$. Namely, we show the following.
\begin{propo}\label{propo:centrQtoG}Let $y\in D$ and let $m\in\dbZ$ be given. Assume $\mc(y)<m$.
\begin{enumerate}
\item If $y$ is unramified then the $m$-th similarity classes of $y$ under ${\OO^\times}$ and under $G$ coincide.
\item Otherwise, if $y$ is ramified, then the $m$-th similarity class of $y$ under ${\OO^\times}$ is partitioned into $\iota=\gcd(q-1,\ell)$ distinct $m$-th similarity classes under $G$ of equal size.
\end{enumerate}
\end{propo}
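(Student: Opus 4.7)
The plan is to exploit that $G = \SL_1(D)$ is precisely the kernel of the restriction $\Nrd \colon \OO^\times \to \oo^\times$, and therefore normal in $\OO^\times$. Surjectivity of $\Nrd \colon D^\times \to K^\times$ is classical, and the identity~\eqref{equa:absolutevalue} forces $\Nrd(\OO^\times) = \oo^\times$, so the reduced norm induces an isomorphism $\OO^\times/G \cong \oo^\times$. Normality of $G$ then implies that an $\OO^\times$-orbit on $D/\PP^m$ decomposes into $G$-orbits which are permuted transitively by $\OO^\times/G$ and are therefore of equal cardinality; the number of such $G$-orbits contained in the $\OO^\times$-orbit of $y + \PP^m$ equals
\[
\bigl[\, \OO^\times : G \cdot \St^m_{\OO^\times}(y) \,\bigr] \;=\; \bigl[\, \oo^\times : \Nrd\bigl( \St^m_{\OO^\times}(y) \bigr) \,\bigr],
\]
and the proposition reduces to computing this index.

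Next I would invoke Proposition~\ref{propo:centralizerformula} to write $\St^m_{\OO^\times}(y) = \CC_{\OO^\times}(y) \cdot (1 + \PP^{m - \mc(y)})$; the hypothesis $\mc(y) < m$ forces $y$ to be non-central by Lemma~\ref{lem:mcproperties}, so the Double Centralizer Theorem yields $\CC_{\OO^\times}(y) = \oo_{K(y)}^\times$ with $\Nrd$ restricting to the field norm $\Nr_{K(y)/K}$ on $K(y)^\times$. Since $1 + \pp \subseteq Z(D^\times)$ and $\Nrd$ acts on this central subgroup as the $\ell$-th power map, one has $\Nrd(1 + \PP^{m - \mc(y)}) \subseteq 1 + \pp$, giving
\[
\Nrd\bigl( \St^m_{\OO^\times}(y) \bigr) \;=\; \Nr_{K(y)/K}(\oo_{K(y)}^\times) \cdot \Nrd\bigl( 1 + \PP^{m - \mc(y)} \bigr).
\]
In case~(1) the extension $K(y)/K$ is unramified of degree $\ell$, and the standard surjectivity of the norm on units of unramified local extensions gives $\Nr_{K(y)/K}(\oo_{K(y)}^\times) = \oo^\times$. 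The index is then $1$, which is exactly the statement that the $\OO^\times$- and $G$-orbits coincide.

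In case~(2) the extension $K(y)/K$ is totally ramified of degree $\ell$, and tame because $\ell \ne p$. Here I would show that $\Nr_{K(y)/K}(\oo_{K(y)}^\times)$ has image $(\dbF_q^\times)^\ell$ modulo $1 + \pp$ — the Teichmüller lift $\mu \subseteq \oo_{K(y)}^\times$ coincides with $\mu \subseteq \oo^\times$ and the norm acts on it as the $\ell$-th power — and moreover contains $1 + \pp$ itself, because in the tame totally ramified case $\Nr_{K(y)/K}$ surjects $1 + \pp_{K(y)}$ onto $1 + \pp$. Since $\Nrd(1 + \PP^{m - \mc(y)}) \subseteq 1 + \pp \subseteq \Nr_{K(y)/K}(\oo_{K(y)}^\times)$, the pro-$p$ correction is absorbed and the index becomes $|\dbF_q^\times / (\dbF_q^\times)^\ell| = \iota$. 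The main obstacle is the surjectivity $\Nr_{K(y)/K}(1 + \pp_{K(y)}) = 1 + \pp$; this can be established either by a filtration argument using the tame different $\mathfrak{d}_{K(y)/K} = \pp_{K(y)}^{\ell - 1}$ and the trace relation $\Tr_{K(y)/K}(\pp_{K(y)}^k) = \pp^{\lceil k / \ell \rceil}$, or by invoking local class field theory, which yields $[K^\times : \Nr_{K(y)/K} K(y)^\times] = \ell$ in the Kummer case $\ell \mid q-1$ (concentrated on units since $\Nr(\pi_y)$ is a uniformizer of $\oo$) and a trivial cokernel in the non-Galois case $\ell \nmid q-1$, giving index $\iota$ in both subcases.
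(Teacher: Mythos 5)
Your argument is correct, and it takes a genuinely different route from the paper. The paper proves part (1) directly by factoring an arbitrary conjugator $g\in\OO^\times$ as $\xi t g'$ with $\xi\in 1+\pp$ central, $t\in\TT_L^\times$ commuting with $y\in L$, and $g'\in G^1$, and proves part (2) by establishing two auxiliary orbit-ratio lemmas (Lemmas~\ref{lem:conjclassGtoG1} and \ref{lem:conjclassQtoG1}), which compare the $G$- and $\OO^\times$-orbits to the $G^1$-orbit by explicit computations with congruence stabilizers, $\nu$-expansions, and Teichm\"uller representatives, then taking a quotient. You instead exploit normality of $G$ in $\OO^\times$: the number of $G$-orbits inside a single $\OO^\times$-orbit is $[\OO^\times : G\cdot\St^m_{\OO^\times}(y)]$, and since $\Nrd$ is a surjection $\OO^\times\to\oo^\times$ with kernel $G$ this equals $[\oo^\times : \Nrd(\St^m_{\OO^\times}(y))]$. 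Combined with Proposition~\ref{propo:centralizerformula}, the whole proposition reduces to knowing the image of the norm map on units of the (unramified, resp.\ tame totally ramified) extension $K(y)/K$ — standard facts, or an application of local class field theory. Your approach is noticeably shorter and more conceptual, outsourcing the hard work to classical norm-index results, whereas the paper's proof is self-contained and elementary, never leaving the arithmetic of $\OO$; the paper's Lemma~\ref{lem:conjclassGtoG1} is also reused later (in Section~\ref{subsect:evenlevel}) where your argument would not provide the intermediate $G^1$-orbit count. One small remark: for the tame totally ramified case, the fact $\Nr_{K(y)/K}(1+\pp_{K(y)})=1+\pp$ that you flag as the main obstacle actually follows cheaply — $\Nr_{K(y)/K}$ restricted to $1+\pp\subseteq K$ is the $\ell$-th power map, which is bijective on the pro-$p$ group $1+\pp$ since $\ell\ne p$, so $1+\pp=\Nr(1+\pp)\subseteq\Nr(1+\pp_{K(y)})\subseteq 1+\pp$ forces equality without any appeal to the trace filtration or class field theory.
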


%Propositions~\ref{propo:centralizerformula} and~\ref{propo:indexcent} above can also be deduced from the work of Koch. As our proofs are independent of Koch's work and for the sake of self-containment, we include them in their complete form.
\begin{proof}[Proof of Proposition~\ref{propo:centralizerformula}]
Let $m\in\dbZ$ be arbitrary, and let $y\in D$ be fixed with $\mu:=\mc(y)$.

If $\mu\ge m$ then $y$ is congruent modulo $\PP^m$ to a central element, and in this case $\St_{\OO^\times}^m(y)={\OO^\times}$. Hence assertion (1) of Proposition~\ref{propo:centralizerformula} holds. 

Suppose $\mu<m$. By definition, any element of $\CC_{\OO^\times}(y)$ fixes $y$ by conjugation, and hence fixes its image modulo $\PP^m$, for all $m\in\dbZ$, and the inclusion $\CC_{\OO^\times}(y)\subseteq \St^m_{\OO^\times}(y)$ holds. The inclusion ${1+\PP}^{m-\mu}\subseteq \St_{\OO^\times}^m(y)$ is also immediate, by a routine calculation. Thus, the inclusion $ \CC_{\OO^\times}(y)\cdot (1+\PP^{m-\mu})\subseteq \St_{\OO^\times}^m(y)$ holds. 

To prove the converse inclusion, we divide the proof into two cases, namely ramified and unramified elements, and proceed by induction on $m>\mu$. As already mentioned, since for any $\lambda\in K$ we have that $\St^m_{\OO^\times}(y)=\St_{\OO^\times}^m(y+\lambda)$, we may assume that the $\mu=\ell\cdot\val(y)$.

\subsubsection*{The Case of Ramified Elements} \label{section:ellnmidmu}
Assume $y$ is ramified and hence by Lemma~\ref{lem:mcproperties}(4), $\ell$ does not divide $\mc(y)=\mu$. We first prove the assertion for the case $m=\mu+1$.

Let $y'\in\OO^\times$ be such that $y=y'\nu^\mu$.  Let $g\in \St^{\mu+1}_{\OO^\times}(y)$ be arbitrary, and let $\sum_{j\ge 0}\nu^jt_j$ be its $\nu$-expansion. Then $t_0^{-1} g\in 1+\PP$ and hence $t_0^{-1}g\in \St^{\mu+1}_{\OO^\times}(y)$ and $t_0\in \St^{\mu+1}_{\OO^\times}(y)$ as well. Thus $t_0$ fixes the image of $y$ modulo $\PP^m$ and 
\begin{align*}
y=\nu^{\mu}y'&\equiv t_0(\nu^{\mu}y')t_0^{-1}\pmod{\PP^{\mu+1}}\\
&=\nu^{\mu}\left(\sigma^{\mu}(t_0) y't_0^{-1}\right)\pmod{\PP^{\mu+1}},
\end{align*}
whence $y'\equiv \sigma^{\mu}(t_0)y' t_0^{-1}\pmod\PP$. Since $y'\in\OO^\times$ and ${\OO^\times}/(1+\PP)\cong\dbF_{q^\ell}^\times$ is commutative, we deduce that $t_0$ and $\sigma^\mu(t_0)$ represent the same coset module $\PP$, and hence they are equal. Since $\sigma^\mu$ generates $\Gal(L/K)$ we deduce that $t_0\in K$, and hence \[g=t_0\cdot t_0^{-1}g\in (K\cap {\OO^\times})\cdot(1+\PP)\subseteq \CC_{\OO^\times}(y)\cdot( 1+\PP),\]
proving the case $m=\mu+1.$ 

Proceeding by induction, assume the claim is true for a given $m>\mu$ and let $g\in \St_{\OO^\times}^{m+1}(y)$ be given. Since $\St^{m+1}_{\OO^\times}(y)\subseteq \St_{\OO^\times}^{m}(y)$, by induction hypothesis, there exist $g'\in {1+\PP}^{m-\mu}$ and $s\in \CC_{\OO^\times}(y)$ such that $g=sg'$. Since $s$ centralizes $y$, we have that $g'\in \St^{m+1}_{\OO^\times}(y)$ and it suffice to prove that $g'\in\CC_{\OO^\times}(y)\cdot  (1+\PP^{m+1-\mu})$. If $g'\in {1+\PP}^{m+1-\mu}$ there is nothing to show. We assume henceforth that ${g'\in ({1+\PP}^{m-\mu})\setminus ({1+\PP}^{m+1-\mu})}.$

Let $\alpha,\beta\in\dbZ$ be such that $\ell\alpha+\mu\beta=1$, and put $\tilde{\nu}:=\pi^{\alpha}y^\beta$. Then $\tilde{\nu}$ commutes with $y$ and  $\val(\tilde{\nu})=\frac{1}{\ell}$. Let $h\in \OO^\times$ be such that $g'=1+\tilde{\nu}^{m-\mu} h$, then 
\[g'y-yg'=\tilde{\nu}^{m-\mu}(hy-yh)\equiv 0\pmod{\PP^{m+1}},\]
and hence $h\in \St^{\mu+1}_{\OO^\times}(y).$ By the base of induction, there exists $s'\in \CC_{\OO^\times}(y)$ and $h'\in \OO$ such that $h=s'(1+\tilde{\nu}h')$. Then \[g'=1+\tilde{\nu}^{m-\mu}s'+\tilde{\nu}^{m-\mu+1}s' h',\] and since\nopagebreak
\begin{align*}
(1-\tilde{\nu}^{m-\mu}s') g'&=
(1-\tilde{\nu}^{m-\mu}s') (1+\tilde{\nu}^{m-\mu}s'+\tilde{\nu}^{m-\mu+1}s' h')\\
&\equiv 1-\tilde{\nu}^{2(m-\mu)}{s'}^2\pmod{\PP^{m+1-\mu}}\\
&\equiv 1\pmod{\PP^{m+1-\mu}}
\end{align*}
and $1-\tilde{\nu}^{m-\mu}s'\in \CC_{\OO^\times}(y),$ it follows that $g\in \CC_{\OO^\times}(y)\cdot(1+\PP^{m+1-\mu})$, as wanted.

\subsubsection*{The Case of Unramified Elements}

As before, we assume $\mu=\ell\cdot\val(y)$. By Lemma~\ref{lem:conjtoL}, there exists an element $g\in {\OO^\times}$ such that ${gyg^{-1}\in L}$. Since the groups ${1+\PP}^m$ are normal in ${\OO^\times}$ and since $\St^{m}_{\OO^\times}(gyg^{-1})=g\St^m_{\OO^\times}(y)g^{-1}$ and $\CC_{\OO^\times}(gyg^{-1})=g\CC_{\OO^\times}(y)g^{-1}$, it is enough to prove the assertion for $y\in L$. We write $y=\pi^{\mu/\ell} y'$ with $y'\in \oo_L^\times.$

The base of induction in this case is immediate since the centralizer of $y$ in ${\OO^\times}$ is ${\OO^\times}\cap L=\oo^\times_L$ and \[{\OO^\times}=\oo_L^\times\cdot (1+\PP)\supseteq \St^{\mu+1}_{\OO^\times}(y)\supseteq \CC_{\OO^\times}(y)\cdot (1+\PP).\]

Assume, by induction, that the assertion is true for a given $m>\mu$, and let $g\in \St^{m+1}_{\OO^\times
}(y)\subseteq\St^m_{\OO^\times}(y)$ be given. As in the previous the case, by induction hypothesis, it is enough to prove the assertion under the assumption $g\in {1+\PP}^{m-\mu}$. Using the $\nu$-expansion of $g$, we write $g=1+\nu^{m-\mu}t+\nu^{m+1-\mu} h$,  with $t\in\TT_L$ and $h\in \OO$.

If $\ell$ divides $m$ as well, then we can write $g=1+\pi^{\frac{m-\mu}{\ell}}t+\nu^{m+1-\mu}h$ and then
\[(1-\pi^{\frac{m-\mu}{\ell}}t) g\equiv 1\pmod{\PP^{m+1-\mu}},\]
and hence $g \in \oo_L^\times \cdot (1+\PP^{m+1-\mu})$, and $\St^{m+1}_{\OO^\times}(y)=\CC_{\OO^\times}(y)(1+\PP^{m+1-\mu})$ as wanted.

Otherwise, by $[g,y]\in \PP^{m+1}$, we have that
\[\left[1+\nu^{m-\mu} t+\nu^{m+1-\mu}h,\pi^{\frac{\mu}{\ell}}y'\right]\equiv \pi^{\frac{\mu}{\ell}}\nu^{m-\mu}(ty'-\sigma^{m-\mu}(y')t)\pmod{\PP^{m+1}}\equiv 0\pmod{\PP^{m+1}},\]
and hence $ty'\equiv \sigma^{m-\mu}(y')t\pmod\PP$. We claim that this implies that $t=0$. Otherwise, if $t\ne 0$, by considering the $\nu$-expansion of $y$ and arguing as in the ramified case, we get that $y'\equiv\lambda\pmod\PP$, for some $\lambda\in \TT$, as $\sigma^{m-\mu}$ generates $\Gal(L/K)$. But then \[\mc(y)\ge \val\left(y-\pi^{\frac{\mu}{\ell}}\lambda\right)\ge\mu+1,\]
a contradiction to $\mc(y)=\mu$. Thus, $t=0$ and $g\in {1+\PP}^{m+1-\mu}$, as wanted.

\end{proof}
\begin{proof}[Proof of Proposition~\ref{propo:indexcent}] Having proved Proposition~\ref{propo:centralizerformula}, we can now deduce Proposition~\ref{propo:indexcent}. Fix $m\in\dbZ$ and let $y\in D$ be such that $\mu:=\mc(y)<m$. By Proposition~\ref{propo:centralizerformula} and a standard computation, we have that order of $\bO^m_{\OO^\times}$ is\nopagebreak
\begin{align*}
\abs{{\OO^\times}:\St_{\OO^\times}^m(y)}&=\abs{{\OO^\times}:\CC_{\OO^\times}(y)\cdot (1+\PP^{m-\mu})}\\
&=\abs{{\OO^\times}/({1+\PP}^{m-\mu}):\CC_{\OO^\times}(y)\cdot( {1+\PP}^{m-\mu})/({1+\PP}^{m-\mu})}\\
&=\frac{\abs{{\OO^\times}:({1+\PP}^{m-\mu})}}{\abs{\CC_{\OO^\times}(y):\CC_{\OO^\times}(y)\cap ({1+\PP}^{m-\mu})}}&%&\text{(by the Isomorphism Theorems).} 
\end{align*}

One easily verifies that $\abs{{\OO^\times}:({1+\PP}^{m-\mu})}=\frac{q^\ell-1}{q^\ell}q^{\ell(m-\mu)}.$ Additionally, as $\CC_{\OO^\times}(y)$ is simply the intersection of ${\OO^\times}$ with the centralizer of $y$ in $D$, and $y$ is assumed non-central, we have that $\CC_{\OO^\times}(y)$ is the group of units in the ring of integers $\oo_{K(y)}$ of $K(y)$.

Assuming $K(y)/K$ is unramified, by Lemma~\ref{lem:conjtoL}, we have that
\[\abs{\CC_{\OO^\times}(y):\CC_{\OO^\times}(y)\cap( {1+\PP}^{m-\mu})}=\abs{\oo_L^\times:\left(1+\pp_L^{\ceil{\frac{m-\mu}{\ell}}}\right)}=\frac{q^\ell-1}{q^\ell}q^{\ell\ceil{\frac{m-\mu}{\ell}}}.\]

Otherwise, if $K(y)/K$ is totally ramified, we have that $\oo_{K(y)}$ is generated as an $\oo$-module by a uniformizer of $\PP$ (e.g. by the element $\tilde{\nu}$ in the proof of Proposition~\ref{propo:centralizerformula}). Consequently
\[\abs{\CC_{\OO^\times}(y):\CC_{\OO^\times}(y)\cap ({1+\PP}^{m-\mu})}=\abs{\oo_{K(y)}^\times:\left(1+\pp_{K(y)}^{m-\mu}\right)}=\frac{q-1}{q}q^{m-\mu}.\]

\end{proof}
%\subsection{Computation of Order of Similarity Classes Under $\SL_1(D)$}
%Our interest in this Subsection is to prove Proposition~\ref{propo:centrQtoG} and apply the computations of the previous Subsection in order to compute order of the $m$-th similarity class of an element $y\in D$ under $G$. %Let us introduce some notation that will be of use in this proof. Given a subgroup $\Delta\subseteq {\OO^\times}$, put \[\bO_\Delta^m(y):=\set{hyh^{-1}+\PP^m\mid h\in \Delta}, \]
%to denote the orbit of the congruence class of $y$ modulo $\PP^m$ under $\Delta$.

%Our objective is to compute the quantity ${\abs{\bO^m_{\OO^\times}(y)}}/{\abs{\bO^m_G(y)}}$, for a fixed $m$ and $y\in D$ such that $\mu=\mc(y)<m$.

%In actuality, there is another group which makes computations significantly easier for us, which is the group $G^1=G\cap( 1+\PP)$. 

Lastly, we prove Proposition~\ref{propo:centrQtoG}. As it turns out, in order to understand the relation between the similarity class of an element under $\OO^\times$ and under $G$, it is useful to consider the similarity class of the element under the group $G^1$ as well.

Given an unramified element $y\in D$, by invoking Lemma~\ref{lem:norm1+P}, we will prove the inclusion $\bO_{\OO^\times}^m(y)\subseteq\bO_{G^1}^m(y)$ for all $m\in\dbZ$, from which the first assertion of Proposition~\ref{propo:centrQtoG} follows. For the second assertion, in the case where $y\in D$ is a ramified element, we deduce the ratio between the orders of $\bO^m_{\OO^\times}(y)$ and $\bO^m_G(y)$ from Lemma~\ref{lem:conjclassGtoG1} and \ref{lem:conjclassQtoG1} below.

\begin{lem}\label{lem:conjclassGtoG1} Let $y\in D$ be a ramified element and let $m\in\dbZ$ be greater than $\mc(y)$. Then
\[\frac{\abs{\bO_G^m(y)}}{\abs{\bO_{G^1}^m(y)}}=\abs{\SL_1\left(\dbF_{q^\ell}\mid\dbF_q\right):\SL_1\left(\dbF_{q^\ell}\mid\dbF_q\right)\cap\dbF_q^\times}=\frac{q^\ell-1}{\iota\cdot (q-1)}.\]
\end{lem}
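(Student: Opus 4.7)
The idea is to exploit normality: since $G^1\trianglelefteq G$, every $G^1$-orbit contained in $\bO_G^m(y)$ has the same cardinality as $\bO_{G^1}^m(y)$, so by the orbit-stabilizer theorem
\[\frac{\abs{\bO_G^m(y)}}{\abs{\bO_{G^1}^m(y)}}=\abs{G:G^1\cdot\St_G^m(y)},\]
where $\St_G^m(y)=G\cap\St_{\OO^\times}^m(y)$. The problem therefore reduces to identifying the quotient $G/G^1$ and computing the image of $\St_G^m(y)$ in it.

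First I would identify $G/G^1$ with $\SL_1(\dbF_{q^\ell}\mid\dbF_q)$. Reduction modulo $\PP$ gives a homomorphism $G\to\OO^\times/(1+\PP)\cong\dbF_{q^\ell}^\times$ with kernel $G^1$, and by Lemma~\ref{lem:norm1+P} the reduced norm sends $1+\PP$ onto $1+\pp$. Hence any lift in $\OO^\times$ of an element of $\SL_1(\dbF_{q^\ell}\mid\dbF_q)$ can be rescaled within its coset modulo $1+\PP$ to have reduced norm $1$, which shows that the induced map $G/G^1\to\SL_1(\dbF_{q^\ell}\mid\dbF_q)$ is an isomorphism.

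Next, since $\mc(y)<m$, Proposition~\ref{propo:centralizerformula} yields
\[\St_G^m(y)=G\cap\bigl(\CC_{\OO^\times}(y)\cdot(1+\PP^{m-\mc(y)})\bigr).\]
The factor $1+\PP^{m-\mc(y)}\subseteq G^1$ contributes trivially to the image in $G/G^1$. By the Double Centralizer Theorem, $\CC_{\OO^\times}(y)=\oo_{K(y)}^\times$; since $y$ is \emph{ramified}, the extension $K(y)/K$ is totally ramified, so $\oo_{K(y)}^\times$ reduces mod $\PP$ into $\dbF_q^\times\subseteq\dbF_{q^\ell}^\times$. Consequently the image of $\St_G^m(y)$ in $G/G^1$ is contained in $\SL_1(\dbF_{q^\ell}\mid\dbF_q)\cap\dbF_q^\times$.

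For the reverse inclusion I would exhibit lifts using roots of unity: the $\iota$ many $\ell$-th roots of unity $\mu_\ell(K)\subseteq K\cap\oo^\times\subseteq\CC_{\OO^\times}(y)$ satisfy $\Nrd(\zeta)=\zeta^\ell=1$, so $\mu_\ell(K)\subseteq\St_G^m(y)\cap K^\times$, and their reductions exhaust $\SL_1(\dbF_{q^\ell}\mid\dbF_q)\cap\dbF_q^\times$ for cardinality reasons. The conclusion then follows from the standard count $\abs{\SL_1(\dbF_{q^\ell}\mid\dbF_q)}=(q^\ell-1)/(q-1)$ and $\abs{\SL_1(\dbF_{q^\ell}\mid\dbF_q)\cap\dbF_q^\times}=\iota$. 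No single step is a serious obstacle here; the mild delicacies are keeping track of which factors in the centralizer formula reduce trivially modulo $\PP$, and ensuring that the preimages one produces really lie in $G\cap\CC_{\OO^\times}(y)$, which is precisely why the $\ell$-th roots of unity in $K$ are the right lifts to use.
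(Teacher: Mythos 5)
Your proof is correct and follows the same overall skeleton as the paper's: both reduce the orbit ratio to $\abs{G:G^1}\big/\abs{\St_G^m(y):\St_{G^1}^m(y)}$ (you phrase this as $\abs{G:G^1\St_G^m(y)}$, which is the same thing since $\St_{G^1}^m(y)=G^1\cap\St_G^m(y)$), identify $G/G^1$ with $\SL_1(\dbF_{q^\ell}\mid\dbF_q)$, and identify the image of $\St_G^m(y)$ with $\SL_1(\dbF_{q^\ell}\mid\dbF_q)\cap\dbF_q^\times$; the surjectivity step via $\ell$-th roots of unity in $K$ is also the paper's argument, since the Teichmüller lifts $t\in\TT^\times$ of $\ell$-th roots of unity in $\dbF_q^\times$ are precisely $\mu_\ell(K)$. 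The one place you genuinely diverge is in proving that the image of $\St_G^m(y)$ lands in $\dbF_q^\times$: you invoke Proposition~\ref{propo:centralizerformula} together with the Double Centralizer Theorem and the fact that a totally ramified extension has residue field $\dbF_q$, whereas the paper re-runs a short direct $\nu$-expansion computation (pulling off the leading coefficient $t_0$ of $g$, deducing $\sigma^\mu(t_0)=t_0$, hence $t_0\in K$). Your route is a legitimate shortcut — the centralizer formula is proved earlier and is in fact used in the companion Lemma~\ref{lem:conjclassQtoG1} — and it arguably makes the structural reason for the containment ($\CC_{\OO^\times}(y)=\oo_{K(y)}^\times$ with trivial residue extension) more transparent, at the cost of depending on a heavier preceding result than the paper strictly needs here.
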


\begin{proof} Consider the restriction of the reduction map ${\OO^\times}\project \dbF_{q^\ell}^\times$ to $\St_G^m(y)$. By definition, the kernel of this map is $\St^m_G(y)\cap (1+\PP)=\St^m_{G^1}(y)$. We claim that the image of this map is $\SL_1(\dbF_{q^\ell}\mid\dbF_q)\cap \dbF_q^\times$.  

Since taking norms commutes with the reduction map, the image of this map is included in $\SL_1(\dbF_{q^\ell}\mid\dbF_q)$. Let us show that the image is also included in $\dbF_q^\times$. 

As before, we may assume that $\mu:=\mc(y)=\ell\val(y)$, and write $y=\nu^\mu y'$ with ${y'\in \OO^\times}$. Let $g\in \St_G^m(y)$ be given, with $g=\sum_{j\ge 0}\nu^jt_j$ its $\nu$-expansion. Then the image of $g$ in $\SL_1(\dbF_{q^\ell}\mid\dbF_q)$ equals the image of $t_0$. By assumption, we have that  $gy\equiv yg\pmod\PP^m$, and since $\mu<m$ it follows that $gy=g\nu^{\mu}y'\equiv \nu^\mu y'g\pmod{\PP^{\mu+1}}$, and hence
\[t_0\nu^\mu y'=\nu^\mu\sigma^\mu(t_0) y'\equiv \nu^\mu t_0 y'\pmod{\PP^{\mu+1}}.\]

Since $y\in\OO^\times$, this implies that $\sigma^\mu(t_0)\equiv t_0\pmod\PP$, and hence $t_0$ is invariant under $\Gal(L/K)$. Thus, $t_0\in K$ and the image of $g$ modulo $1+\PP$ is in $\dbF_q^\times$.

Lastly, note that for any $\mbf{t}\in\SL_1(\dbF_{q^\ell}\mid\dbF_q)\cap \dbF_q^\times$, the corresponding coset representative $t\in \TT^\times$ is central in $D$ and in particular an element of $\St^m_G(y)$. Thus, the reduction map is onto $\SL_1(\dbF_{q^\ell}\mid\dbF_q)\cap \dbF_q^\times$.

%Thus, we have an exact sequence 
%\[1\to\St^m_{G^1}(y)\too\St_G^m(y)\too \SL_1(\dbF_{q^\ell}\mid\dbF_q)\cap \dbF_q^\times\to 1.\]

It follows that $\abs{\St^m_{G}(y):\St^m_{G^1}(y)}=\abs{\SL_1\left(\dbF_{q^\ell}\mid\dbF_q\right)\cap\dbF_q^\times}=\iota$ and hence \[\frac{\abs{\bO_{G}^m(y)}}{\abs{\bO_{G^1}(y)}}=\frac{\abs{G:\St_{G}^m(y)}}{\abs{G^1:\St_{G^1}^m(y)}}=\frac{\abs{G:G^1}}{\abs{\St_G^m(y):\St_{G^1}^m(y)}}=\frac{q^\ell-1}{\iota\cdot(q-1)}.\]
\iffalse
\begin{equation}\label{dimonddiagram}
\begin{tikzpicture}[description/.style={fill=white,inner sep=2pt}]
\matrix (m) [matrix of math nodes, row sep=1em,
column sep=1.5em, text height=1.5ex, text depth=0.25ex]
{&G\\ \St^m_G(y) & & G^1 \\
& \St^m_{G^1}(y)& \\ };
%\draw[double,double distance=5pt] (m-1-1) – (m-1-3);
\path[-,font=\scriptsize]
(m-2-1) edge[thick] node[auto] {} (m-1-2)
(m-1-2) edge[thick] node[auto] {} (m-2-3)
(m-2-3) edge[thick] node[auto] {} (m-3-2)
(m-2-1) edge[thick] node[auto] {} (m-3-2);
\end{tikzpicture}
\end{equation}
\fi
\end{proof}

\begin{lem}\label{lem:conjclassQtoG1} Let $y\in D$ be a ramified element and let $m\in\dbZ$ be greater than $\mc(y)$. Then
\[\frac{\abs{\bO^m_{\OO^\times}(y)}}{\abs{\bO^m_{G^1}(y)}}=\abs{\SL_1\left(\dbF_{q^\ell}\mid\dbF_q\right)}=\frac{q^\ell-1}{q-1}.\]
\end{lem}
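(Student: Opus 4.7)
The strategy is to mirror the proof of Lemma~\ref{lem:conjclassGtoG1}, replacing the group $G$ by $\OO^\times$. First I would consider the reduction map $\OO^\times\to\OO^\times/(1+\PP)\cong\dbF_{q^\ell}^\times$ and restrict it to $\St^m_{\OO^\times}(y)$. The $\nu$-expansion argument used in the proof of the previous lemma never invoked the norm-one condition on $g$; it only used $\mu<m$, the ramified-ness of $y$ (i.e.\ $\ell\nmid\mu$), and the fact that $\sigma^\mu$ generates $\Gal(L/K)$. Applied verbatim to an arbitrary $g\in\St^m_{\OO^\times}(y)$, it forces the leading coefficient $t_0$ of the $\nu$-expansion of $g$ to lie in $K$. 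Combined with the trivial inclusion $\TT^\times\subseteq\St^m_{\OO^\times}(y)$ (central elements of $\OO^\times$), this identifies the image of $\St^m_{\OO^\times}(y)$ in $\dbF_{q^\ell}^\times$ as $\dbF_q^\times$, and consequently yields
\[ |\OO^\times:(1+\PP)\cdot\St^m_{\OO^\times}(y)| \;=\; \frac{q^\ell-1}{q-1}. \]

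Next I would relate the remaining factor $|(1+\PP):(1+\PP)\cap\St^m_{\OO^\times}(y)|$ to $|G^1:\St^m_{G^1}(y)|$ via the reduced norm. By Lemma~\ref{lem:norm1+P}, the map $\Nrd\colon 1+\PP\to 1+\pp$ is surjective with kernel $G^1$; restricting it to $(1+\PP)\cap\St^m_{\OO^\times}(y)$ produces a homomorphism with kernel $\St^m_{G^1}(y)$. The crucial point is that this restricted map remains surjective: given any $\eta\in 1+\pp$, the second assertion of Lemma~\ref{lem:norm1+P} produces a unique $\xi\in 1+\pp$ with $\xi^\ell=\eta$, and since $\xi\in K$ is central in $D$ it lies in $\St^m_{\OO^\times}(y)\cap(1+\PP)$ and satisfies $\Nrd(\xi)=\xi^\ell=\eta$. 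By the second isomorphism theorem one then has $G^1\cdot\bigl[(1+\PP)\cap\St^m_{\OO^\times}(y)\bigr]=1+\PP$, whence
\[ |(1+\PP):(1+\PP)\cap\St^m_{\OO^\times}(y)| \;=\; |G^1:\St^m_{G^1}(y)|. \]

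Multiplying these two index identities yields $|\OO^\times:\St^m_{\OO^\times}(y)|=\frac{q^\ell-1}{q-1}\cdot |G^1:\St^m_{G^1}(y)|$, which is precisely the claimed ratio $|\bO^m_{\OO^\times}(y)|/|\bO^m_{G^1}(y)|=\frac{q^\ell-1}{q-1}=|\SL_1(\dbF_{q^\ell}\mid\dbF_q)|$. I do not anticipate any serious obstacles: the only new ingredient relative to Lemma~\ref{lem:conjclassGtoG1} is the surjectivity of the restricted reduced norm, which is handled cleanly by the central $\ell$-th root supplied by Lemma~\ref{lem:norm1+P}; the remaining bookkeeping of quotients and cosets is entirely parallel to the previous lemma.
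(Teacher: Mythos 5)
Your proof is correct, but it takes a genuinely different route from the paper. The paper's argument is combinatorial: it exhibits an explicit partition of the $\OO^\times$-orbit into $G^1$-orbits by showing that the $\TT_L^\times$-orbit of $y+\PP^m$ (which has $\tfrac{q^\ell-1}{q-1}$ elements) forms a complete and irredundant set of representatives for the $G^1$-orbits inside $\bO^m_{\OO^\times}(y)$; this requires verifying three separate claims, and the equal size of the $G^1$-orbits is only implicit (coming from normality of $G^1$ in $\OO^\times$). Your argument instead works entirely through the orbit--stabilizer theorem, factoring the index $\abs{\OO^\times:\St^m_{\OO^\times}(y)}$ through the normal filtration $\OO^\times\supseteq 1+\PP\supseteq G^1$: the top layer is handled by the same $\nu$-expansion computation as in Lemma~\ref{lem:conjclassGtoG1} (which you correctly observe never used the norm-one hypothesis), and the middle layer is handled by pushing $\St^m_{\OO^\times}(y)\cap(1+\PP)$ along $\Nrd\colon 1+\PP\project 1+\pp$, whose restriction stays surjective because central $\ell$-th roots (furnished by the bijectivity of $\xi\mapsto\xi^\ell$ on $1+\pp$, which is the content behind the second half of Lemma~\ref{lem:norm1+P}) already fill $1+\pp$. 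Both approaches rest on the same two structural facts --- the splitting $\OO^\times=\TT_L^\times\ltimes(1+\PP)$ and the exact sequence $1\to G^1\to 1+\PP\xrightarrow{\Nrd}1+\pp\to 1$ --- but yours packages them as index identities rather than as an orbit partition, which makes the equality of $G^1$-orbit sizes automatic and avoids the bookkeeping of the paper's claims (1)--(3). One small imprecision worth tightening: the second assertion of Lemma~\ref{lem:norm1+P} as stated only produces a unique $\ell$-th root of $\Nrd(x)$ for $x\in 1+\PP$; what you actually need is the surjectivity of $\xi\mapsto\xi^\ell$ on all of $1+\pp$, which is established in the lemma's proof (the map is an automorphism of the pro-$p$ group $1+\pp$ since $\ell\ne p$) but is slightly stronger than the statement cited.
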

\begin{proof}Consider the orbit of $y+\PP^m$ under the action of the group $\TT_L^\times$,
\[\bO^m_{\TT^\times_L}(y)=\set{tyt^{-1}+\PP^m\mid t\in \TT_L^\times}\subseteq\bO_{\OO^\times}^m(y).\]
We claim the following assertions from which Lemma~\ref{lem:conjclassQtoG1} easily follows.
\begin{enumerate}
\item The orbit $\bO^m_{\TT^\times_L}$ consists of $\frac{q^\ell-1}{q-1}$ elements.
\item Let $y'\in \bO^m_{\TT_L^\times}(y)\cap\bO^m_{G^1}(y)$. Then $y'\equiv y\pmod{\PP^m}$. 
\item Suppose $x\in \bO^m_{{\OO^\times}}(y)$. Then there exists $y'\in D$ such that $x\in\bO^m_{G^1}(y')$ and $y'+\PP^m\in \bO^m_{\TT_L^\times}(y)$.
\end{enumerate}
As before, it is enough to prove the lemma for the case where $\mu=\mc(y)=\ell\val(y)$. Put $y=\nu^{\mu} y'$ with $y'\in \OO^\times$. 

Suppose $t\in \TT^\times_L$ fixes $y$. Then, similarly to the proof of Lemma~\ref{lem:conjclassGtoG1}, we deduce that $t=\sigma^{\mu}(t)$ and hence $t\in \TT^\times$. As any element of $\TT^\times$ centralizes $y$, by the Orbit-Stabilizer Theorem we obtain that \[\abs{\bO^m_{\TT^\times_L}(y)}=\abs{\TT_L^\times:\TT^\times}=\frac{q^\ell-1}{q-1},\] which proves (1).

Additionally, assertion (3) is an immediate consequence of the decomposition ${\OO^\times}=\TT_L^\times\ltimes (1+\PP)$ of Lemma~\ref{lem:splittingsequence} and Corollary~\ref{corol:G1conjtoL}. Indeed, if $g\in {\OO^\times}$ is such that $gyg^{-1}\equiv x\pmod{\PP^m}$, take $t\in \TT_L^\times$ and $\xi\in 1+\pp$ such that $gt^{-1}\in 1+\PP$ and $\xi^\ell=\Nrd(gt^{-1})$. Then for $y'=tyt^{-1}$ and $g':=\xi^{-1}gt^{-1}\in G^1$ we have that $y'+\PP^m\in \bO^m_{\TT^\times_L}(y)$, and 
\[g'y'{g'}^{-1}=\xi^{-1}(gt^{-1})y'(gt^{-1})^{-1}\xi= gyg^{-1}\equiv x\pmod{\PP^m}.\] Thus $x\in\bO_{G^1}^m(y')$.

Finally, we prove (2). Let $t\in \TT_L^\times$ and $g\in G^1$ be such that 
\[y'\equiv tyt^{-1}\pmod{\PP^m}\equiv gyg^{-1}\pmod{\PP^m}.\]
Then in particular $gt^{-1}$ fixes $y$ modulo $\PP^m$, i.e. $gt^{-1}\in \St^m_{\OO^\times}(y).$ By Proposition~\ref{propo:centralizerformula} and since $\mu<m$ and $g\in 1+\PP$, we get that the image of $t$ modulo $1+\PP$ is in $\CC_{\OO^\times}(y)\cdot(1+\PP)/(1+\PP)$. Since $t$ is a coset representative of ${\OO^\times/(1+\PP)}$, this implies that $t$ centralizes $y$ and hence $y'\equiv y\pmod{\PP^m},$ as wanted.\end{proof}

\begin{proof}[Proof of Proposition~\ref{propo:centrQtoG}]

%\begin{lem}\label{lem:ellsameconj} Let $y\in D$ be an unramified element. Then
%\[\bO_{G^1}^m(y)=\bO^m_G(y)=\bO^m_{\OO^\times}(y).\]
%\end{lem}

\begin{enumerate}
\item Let $y\in D$ be an unramified element. We prove the equality
\[\bO^m_{G^1}(y)=\bO^m_G(y)=\bO^m_{\OO^\times}(y).\]

As the inclusion 
$\bO_{G^1}^m(y)\subseteq\bO^m_G(y)\subseteq\bO^m_{\OO^\times}(y)$ is clear from the definition \eqref{equation:simclassdefi}, we only need to show that $\bO^m_{\OO^\times}(y)\subseteq \bO^m_{G^1}(y)$. By Corollary~\ref{corol:G1conjtoL}, without loss of generality, we may assume that $y\in L$.

Let $y'\in \bO_{\OO^\times}^m(y)$, and let $g\in {\OO^\times}$ be such that $gy'g^{-1}\equiv y\pmod{\PP^m}$. % and let $\tilde{y'}\in D$ be a lift of $y$ (i.e. such that $y'=\tilde{y'}+\PP^m$). 
By considering the $\nu$-expansion of $g$, there exists $t\in \TT_L^\times$ such that $t^{-1}g\in 1+\PP$. By Lemma~\ref{lem:norm1+P} there exists $\xi\in 1+\pp$ such that $\xi^\ell=\Nrd(t^{-1}g)$ and thus $g':=\xi^{-1}t^{-1}g\in (1+\PP)\cap G=G^1.$ Additionally, \[{g'}^{-1}yg'=g^{-1}(tyt^{-1})g=g^{-1}yg\equiv y'\pmod{\PP^m},\]
since $\xi$ is central in $D$ and $y,t\in L$. Thus, $y'\in\bO^m_{G^1}(y)$.
\item 
Let $y\in D$ be a ramified element. The $m$-th similarity class  $\bO^m_{\OO^\times}(y)$ is partitioned into a disjoint union of similarity classes of $\OO^\times$-conjugates of $y$ under $G$. By Lemmas~\ref{lem:conjclassGtoG1} and \ref{lem:conjclassQtoG1}, the number of such similarity classes is
\[\frac{\abs{\bO^m_{\OO^\times}(y)}}{\abs{\bO^m_G(y)}}={\frac{\abs{\bO^m_{\OO^\times}(y)}}{\abs{\bO^m_{G^1}(y)}}}\cdot{\frac{\abs{\bO^m_{G^1}(y)}}{\abs{\bO^m_{G}(y)}}}=\iota.\]
\end{enumerate}
\end{proof}

\section{Character Correspondence for Subquotients of $\SL_1(D)$}\label{section:chatcorres}

Fix $r,m\in\dbN$, such that $r\le m\le 3r$. %Our purpose in this section is to develop some of the tools which will be used in the analysis of the irreducible representations of $\GG^r_m$ in several key cases for our construction. 
Note that in general, the group $\GG^r_m$ is a class-2 nilpotent group, and is abelian whenever $m\le 2r$. Furthermore, in the case where $r$ is divisible by $\ell$ we also have that $\GG^r_{2r+1}$ is abelian, as the commutator subgroup of $G^r$ is included in $G^{2r+1}$ in this case.

In the following section we construct an explicit $G$-equivariant bijection between the group $\GG^r_m$ and the finite Lie-ring $\g^{r}_{m}$. In particular, in the case where $r\le m\le 2r$, this bijection induces a $G$-equivariant identification of the set $\irr(\GG^r_m)$ with a finite Lie-ring of the form $\g^{r'}_{m'}$, for suitable $r',m'\in\dbZ$ (see Corollary~\ref{corol:dualforabelian}). In Section~\ref{subsect:heisenberg} we describe the method of \textit{Heisenberg lifts} which provides us with the tools to utilize this bijection in order to describe the set of irreducible characters of non-abelian groups of the form $\GG^{r}_{2r+1}$.

\subsection{Lie Correspondence}~\label{subsection:liecorres}
\subsubsection*{Truncated Exponential and Logarithm}\label{subsection:explog}
\begin{lem}\label{lem:exp}Let $r,m\in \dbN$ be given with $r\le m\le 3r$. The truncated exponential map, defined by
\begin{align}\eexp\colon \g^{r}_{m}&\too\GG^r_m\notag\\~
X+\mfr{g}_{m}&\mapsto \left(1+X+\tfrac{1}{2}X^2 \right)G^m,\quad (X\in\mfr{g}^r)\notag,\end{align}
%\begin{align*}\log^m\colon \GG^{r}_{m}&\too\g^r_m\\~\\
%(1+X)G^{m}&\mapsto X-\frac{X^2}{2}+\mfr{g}^m,\quad (1+X\in G^r)\end{align*}
establishes a $G$-equivariant bijection between the group $\GG^r_m$ and the Lie-ring $\g^{r}_{m}$. Furthermore, if $m\le 2r$ then this map is a $G$-equivariant isomorphism of abelian groups.

The inverse map of $\eexp$ is given by the truncated logarithm map \[\llog\left((1+X)G^m\right)=X-\tfrac{1}{2}X^2.\]
\end{lem}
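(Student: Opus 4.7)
The plan is to check that, modulo $\PP^m$, the truncated polynomial expressions
\[
\eexp(X) = 1 + X + \tfrac{1}{2}X^2, \qquad \llog(1+Y) = Y - \tfrac{1}{2}Y^2
\]
formally invert each other. Two features of the hypotheses drive this: $p \ne 2$ makes $\tfrac{1}{2}$ a unit in $\oo$, and $m \le 3r$ ensures that every monomial of degree $\ge 3$ in elements of $\PP^r$ automatically lies in $\PP^{3r} \subseteq \PP^m$ and therefore vanishes in the quotient.

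I would first verify well-definedness of $\eexp$. Given $X \in \mfr{g}^r$, the element $u := 1 + X + \tfrac{1}{2}X^2$ lies in $1 + \PP^r$ but has in general $\Nrd(u) \ne 1$. Applying the formal identity $\log \Nrd = \Trd \log$ to the truncated series and using $\Trd(X) = 0$ yields $\Nrd(u) \in 1 + \pp^{\lceil 3r/\ell \rceil}$. By Lemma~\ref{lem:norm1+P}, there is a unique $\xi \in 1 + \pp^{\lceil 3r/\ell \rceil}$ with $\xi^\ell = \Nrd(u)$, so that $\xi^{-1}u \in G^r$; since $\xi \equiv 1 \pmod{\PP^m}$, the resulting coset in $G^r/G^m$ depends only on $X$ modulo $\mfr{g}^m$. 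An analogous trace correction shows that $\llog$ lands in $\mfr{g}^r/\mfr{g}^m$. Next, direct expansion gives
\[
\llog(\eexp(X)) = X - \tfrac{1}{2}X^3 - \tfrac{1}{8}X^4, \qquad \eexp(\llog(1+Y)) - 1 = Y - \tfrac{1}{2}Y^3 + \tfrac{1}{8}Y^4,
\]
and both tails lie in $\PP^{3r} \subseteq \PP^m$, so both compositions act as the identity on the respective quotients. $G$-equivariance is then immediate from the polynomial form of the maps, since conjugation by any $g \in G$ is a $K$-algebra automorphism of $D$ that preserves sums, products, and the scalar $\tfrac{1}{2}$.

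For the group-homomorphism assertion when $m \le 2r$, both $\GG^r_m$ and $\g^r_m$ are abelian: the former because $[G^r, G^r] \subseteq G^{2r} \subseteq G^m$, the latter trivially. The computation
\[
(1 + X + \tfrac{1}{2}X^2)(1 + X' + \tfrac{1}{2}X'^2) \equiv 1 + X + X' \equiv \eexp(X + X') \pmod{\PP^m}
\]
is immediate, because every non-linear term is a product of at least two elements of $\PP^r$ and thus lies in $\PP^{2r} \subseteq \PP^m$. The main technical obstacle is the first step, namely the trace/norm bookkeeping underlying well-definedness of $\eexp$ and $\llog$; once that is in place, the remaining verifications are direct polynomial manipulation modulo $\PP^m$.
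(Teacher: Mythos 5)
The paper states Lemma~\ref{lem:exp} without proof, so there is no in-paper argument to compare against; your argument supplies the standard verification and is essentially correct. The one place that deserves to be made explicit is the claim that $\Nrd\bigl(1+X+\tfrac12X^2\bigr)\in 1+\pp^{\lceil 3r/\ell\rceil}$: the heuristic ``$\log\Nrd=\Trd\log$'' applied to truncated polynomials is suggestive but not a proof in itself. The clean verification is to write $\Nrd(1+Y)=1+e_1(Y)+\cdots+e_\ell(Y)$ for $Y=X+\tfrac12X^2$ and observe that, since $\Trd(X)=0$, the degree-one homogeneous part of $\Nrd(u)-1$ vanishes and the degree-two part cancels between the contributions $\tfrac12\Trd(X^2)$ from $e_1$ and $-\tfrac12\Trd(X^2)$ from $e_2$; every surviving monomial involves at least three factors of $X$ and hence has valuation at least $3r/\ell$. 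A parallel expansion, now using $\Nrd(1+Y)=1$ to control $\Trd(Y)$ via Newton's identities, justifies the trace correction you invoke to show that $\llog$ lands in $\g^r_m$. With those two computations spelled out, the remainder of your argument (invertibility modulo $\PP^{3r}\subseteq\PP^m$, $G$-equivariance from conjugation being an algebra automorphism fixing $\tfrac12$, and the homomorphism property when $m\le 2r$ because all quadratic cross-terms lie in $\PP^{2r}\subseteq\PP^m$) is correct and complete.
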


%\footnote{@Uri: I don't thin I understand your mutatis mutandis remark. I don't think I ever saw an actual proof of this fact written in a paper}. We remark, however, that the maps $\eexp$ and $\llog$, as defined above, may fail to be inverses if one does not assume $p\ne 2$. For example, if $\Char(K)=0$ and $\val(2)=1$, then %for $r=1,\:m=1$ and $X=\nu$ we have that
 
%\[\eexp(\llog(1+\nu))=1+\nu-\tfrac{1}{2}\nu^3\not\equiv 1+\nu(\mod\PP^3).\]

\iffalse We also remark that by explicit computation, based on \eqref{equa:matrixrepn}, one may verify the following trace identity-
\begin{equation}
\NNrd_m(\eexp(X))=1+\TTrd_m(X)+\tfrac{1}{2}\TTrd_m(X)^2,\quad X\in\g^r_m
\end{equation}
where $\TTrd_m:\OO/\PP^m\to \oo/\pp^{\ceil{m/\ell}}$ is induced from the reduced trace map.
\fi

We also have the following formulas.
\begin{lem}\label{lem:formulasexplog}Let $r,m\in\dbN$ be given such that $r\le m\le 3r$. Then, for any $x,y\in\g^r_m$, we have
\begin{equation}\label{equation:expAdlogad}\llog\left(\left(\eexp(x),\eexp(y)\right)\right)=\left[x,y	\right],\end{equation}
and the truncated Baker-Campbell-Haudorff formula holds
\begin{equation}\label{equation:bch}\llog(\eexp(x)\cdot\eexp(y))=x+y+\tfrac{1}{2}[x,y].\end{equation}
\end{lem}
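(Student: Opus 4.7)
The plan is to prove the BCH identity \eqref{equation:bch} first by direct expansion of the truncated exponential, and then deduce the commutator identity \eqref{equation:expAdlogad} by iterating \eqref{equation:bch} three times.

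First I would lift to representatives $\tilde{x},\tilde{y}\in\mfr{g}^r$. For \eqref{equation:bch}, expand the product $\eexp(\tilde x)\cdot\eexp(\tilde y)$ as a polynomial in $\tilde x,\tilde y$, discard all monomials of total degree at least three (each lies in $\PP^{3r}\subseteq\PP^m$ by the hypothesis $m\le 3r$), and apply $\llog$. The remainder is $1+w$ with $w=\tilde x+\tilde y+\tfrac{1}{2}\tilde x^2+\tilde x\tilde y+\tfrac{1}{2}\tilde y^2$, and $\llog(1+w)=w-\tfrac12 w^2$ together with $w^2\equiv(\tilde x+\tilde y)^2\pmod{\PP^{3r}}$ collapses to $\tilde x+\tilde y+\tfrac{1}{2}[\tilde x,\tilde y]$ modulo $\PP^m$ after expanding $(\tilde x+\tilde y)^2=\tilde x^2+\tilde x\tilde y+\tilde y\tilde x+\tilde y^2$. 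Additivity of the reduced trace and its vanishing on algebra commutators ensure the right-hand side lies in $\mfr{g}^r$, so the identity descends to $\g^r_m$.

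To derive \eqref{equation:expAdlogad}, I would first verify that $\eexp(-\tilde x)$ is the inverse of $\eexp(\tilde x)$ in $\GG^r_m$, which follows from the direct computation
\[(1+\tilde x+\tfrac{1}{2}\tilde x^2)(1-\tilde x+\tfrac{1}{2}\tilde x^2)\;=\;1+\tfrac{1}{4}\tilde x^4\;\equiv\;1\pmod{\PP^m},\]
since $\tilde x^4\in\PP^{4r}\subseteq\PP^m$. Rewriting the group commutator as $\eexp(\tilde x)\eexp(\tilde y)\eexp(-\tilde x)\eexp(-\tilde y)$, I apply \eqref{equation:bch} three times successively: combining $\eexp(\tilde x)\eexp(\tilde y)$ yields $\eexp(\tilde x+\tilde y+\tfrac12[\tilde x,\tilde y])$, then absorbing $\eexp(-\tilde x)$ yields $\eexp(\tilde y+[\tilde x,\tilde y])$, and finally absorbing $\eexp(-\tilde y)$ yields $\eexp([\tilde x,\tilde y])$. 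Since $\eexp$ and $\llog$ are mutually inverse by Lemma~\ref{lem:exp}, this gives \eqref{equation:expAdlogad}.

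The only real obstacle is bookkeeping: I must verify that each intermediate application of \eqref{equation:bch} introduces only correction terms of the shape $[[\tilde x,\tilde y],\tilde x]$ or $[[\tilde x,\tilde y],\tilde y]$. These are triple Lie brackets of elements of $\PP^r$, hence live in $\PP^{3r}\subseteq\PP^m$ by the graded bracket bound $[\PP^s,\PP^t]\subseteq\PP^{s+t}$, and so vanish in $\g^r_m$. I also need to check at each step that the intermediate Lie-algebra element stays in $\mfr{g}^r$, which is immediate from additivity of the reduced trace and its vanishing on commutators.
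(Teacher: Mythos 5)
The paper states this lemma without proof, treating the computation as routine, so your argument fills a gap the author left to the reader rather than taking a different route. Your proof is correct: the direct expansion of $\eexp(\tilde x)\eexp(\tilde y)$, discarding of degree-$\ge 3$ monomials in $\PP^{3r}\subseteq\PP^m$, and application of $\llog$ correctly establish the truncated BCH identity; the computation $(1+\tilde x+\tfrac12\tilde x^2)(1-\tilde x+\tfrac12\tilde x^2)=1+\tfrac14\tilde x^4\equiv 1\pmod{\PP^m}$ justifies using $\eexp(-\tilde x)$ as the inverse; and the threefold iteration of BCH for the group commutator, with the observation that the only corrections are triple Lie brackets of elements of $\PP^r$, hence lie in $\PP^{3r}\subseteq\PP^m$, gives \eqref{equation:expAdlogad}. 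The bookkeeping that intermediate elements stay in $\mfr{g}^r$ (via $\Trd$ being additive and vanishing on commutators, and $[\PP^s,\PP^t]\subseteq\PP^{s+t}$) is also handled correctly.
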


Here $\left[x,y\right]=xy-yx$ denotes the Lie-bracket on $\g^r_m$, inherited from the associative multiplication on $\OO$, and $\left(g,h\right)=ghg^{-1}h^{-1}$ is the group commutator, for $g=\eexp(x),\:h=\eexp(y)\in G^r_m.$

\begin{rem}\label{rem:even-res-char} The definition of the truncated exponential and logarithm maps necessitates the restriction $p\ne 2$. Nevertheless, in the case where $r\le m\le 2r$, a $G$-equivariant isomorphism of $\g^r_m$ onto $\GG^r_m$ is given by $X+\mfr{g}_m\mapsto (1+X)G^m$, regardless of the residual characteristic of $\oo$. This isomorphism supplies us with a, possibly somewhat limited, Lie-correspondence in the case where $p=2$. 

As can be seen in Subsections \ref{subsect:oddlevel} and \ref{subsect:divby2ell}, much of the construction of characters of $G$ carries through within this limited setting and can be applied in even residual characteristic as well. The construction in the complementary case (Subsection~\ref{subsect:evenlevel}), however, requires invoking the assumption $p\ne 2$ at several key points (namely, the above correspondence and Proposition~\ref{propo:radicals}) and it would be of interest to see if an argument bypassing this restriction can be obtained using an alternative method. 
\end{rem}

\subsubsection*{The Dual of Subquotients of $\mfr{g}$}

The Pontryagin dual of the abelian groups $\g^r_m$, for $r\le m$, can be explicitly described as subquotients of the Lie-algebra $\mfr{g}$. We recall that the Lie-algebra $\mfr{sl}_1(D)$ of traceless elements in $D$ is equipped with a non-degenerate bilinear-form (viz. its Killing form), and hence is self-dual as a $K$-vector space. This isomorphism is equivariant with respect to the conjugation action of $G$. 

%As the finite Lie rings $\g^r_m$ bare a conspicuous resemblance to $\mfr{sl}_1(D)$ one might find it plausible to expect a similar form to induce a $G$-equivariant isomorphism of $\g^r_m$ with its Pontryagin dual.
In the case of the finite Lie-rings $\g^r_m$, a $G$-equivariant isomorphism of the ring with its Pontryagin dual does not necessarily exists.  Nonetheless, as we will see shortly, it is still possible to identify the dual of $\g^r_m$ with a Lie-ring of the form $\g^{r'}_{m'}$, where $r',m'$ are determined by $r$ and $m$. 

\newcommand{\Res}{\mathrm{\varrho}}
We begin by selecting a character of $K$ which factors through $K/\oo$ and does not vanish on $\pi^{-1}$. 
Let $\mfr{k}$ be either $\dbQ_p$ if $\Char(K)=0$ or $\dbF_p\dpar{t}$ otherwise, and let $\mfr{R}$ denote its ring of integers. Define $\psi_\mfr{k}$ by the  composition 
\begin{align*}\psi_{\dbQ_p}&\colon {\dbQ_p\project\dbQ_p/\dbZ_p\xrightarrow{\simeq} \Bmu_{p,\infty}}\inject\dbC^\times ,\\
\intertext{if $\Char(K)=0$, or}
 \psi_{\dbF_p\dpar{t}}&\colon \dbF_p\dpar{t}\xrightarrow{\Res_{0}}\dbF_p\xrightarrow{\simeq}\Bmu_p\inject\dbC^\times,\end{align*}
where $\Res_0$ maps a series $\sum_{j}a_jt^j$, with $a_j\in\dbF_q$
and $a_j=0$ for all $j<j_0$ for some $j_0<0$, to the coefficient $a_{-1}$. Let $\delta:=\delta_{K/\mfr{k}}\in\dbN_0$ be the differential exponent of $K/\mfr{k}$, i.e. such that $\pi^{-\delta}$ generates the inverse different $\mfr{d}_{K/\mfr{k}}^{-1}:=\set{x\in \oo\mid \Tr_{K/\mfr{k}}(x\oo)\subseteq\mfr{R}}.$ Note that by definition $\Tr_{K/\mfr{k}}(\pi^{-\delta-1})\in p^{-1}\mfr{R}\setminus\mfr{R}$ and hence $\Tr_{K/\mfr{k}}(\pi^{-\delta-1})\notin\ker(\psi_{\mfr{k}}).$ The map $\Psi:K\to\dbC^\times$, defined by \begin{equation}\label{equation:Psi-definition}\Psi(x):=\psi_{\mfr{k}}\circ\Tr_{K/\mfr{k}}(\pi^{-\delta-1}\cdot x),\end{equation} is the required character.% holds.\footnote{This $\lambda$ is superfluous, but i think it's easier to add it than to explain why it can be taken to be $1$}

\begin{lem}\label{lem:dualpairing}Let $r,m\in\dbN$ be given with $r\le m$. Define a map
\begin{align}\notag \gen{\cdot,\cdot}^r_m&\colon\g^r_m\times\g^{-m+1}_{-r+1}\too\dbC^\times\\
\gen{x,y}^r_m&:=\Psi\left(\pi^{-1}\cdot \Trd \left(X\cdot Y\right)\right),\label{equation:dualformdefi}
\end{align}
where $X\in\mfr{g}^r$ and $Y\in \mfr{g}^{-m+1}$ are lifts of $x$ and $y$ respectively. Then $\gen{\cdot,\cdot}^r_m$ is a well-defined, $G$-invariant and non-degenerate bilinear pairing.  
\end{lem}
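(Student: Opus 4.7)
The proof decomposes into four standard verifications: bilinearity, well-definedness, $G$-invariance, and non-degeneracy.

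\emph{Bilinearity} is immediate: the map $(X,Y)\mapsto\Trd(XY)$ is $K$-bilinear on $D\times D$, hence additive in each variable on $\mfr{g}\times\mfr{g}$, and $\Psi\colon (K,+)\to\dbC^\times$ is a group homomorphism, so the composite is a bilinear pairing.

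\emph{Well-definedness.} Since $\Psi$ is a homomorphism, it suffices to show that $\Psi\bigl(\pi^{-1}\Trd(X'Y)\bigr)=1$ for $X'\in\mfr{g}^m$, $Y\in\mfr{g}^{-m+1}$ (and symmetrically for $Y'\in\mfr{g}^{-r+1}$). By the choice of $\Psi$ via the differential exponent $\delta$, one has $\ker(\Psi|_\oo)=\pp$, so the task reduces to verifying that $\pi^{-1}\Trd(X'Y)\in\pp$. The key tool is the explicit computation, based on~\eqref{equa:matrixrepn} and Proposition~\ref{propo:structuretheo}, that $\Trd(\nu^j t)$ vanishes for $\ell\nmid j$ and equals $\pi^{j/\ell}\,\Tr_{L/K}(t)$ for $\ell\mid j$. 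Writing $X'Y\in\PP^1$ in its $\nu$-expansion, only the $\nu$-components of index divisible by $\ell$ contribute to the trace; the trace-zero constraints on both $X'$ and $Y$ (inherited from $\mfr{g}$) force the lowest such contribution to lie one $\pi$-power deeper, yielding the required bound.

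\emph{$G$-invariance} follows from the cyclicity of the reduced trace: for $g\in G$,
\[
\Trd\bigl((gXg^{-1})(gYg^{-1})\bigr)\;=\;\Trd\bigl(g(XY)g^{-1}\bigr)\;=\;\Trd(XY),
\]
so $\gen{gxg^{-1},gyg^{-1}}^r_m=\gen{x,y}^r_m$.

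\emph{Non-degeneracy} is the main content. The pairing induces a homomorphism $\Phi\colon \g^r_m\to(\g^{-m+1}_{-r+1})^\dual$. A direct cardinality count, using that both $\mfr{g}^r/\mfr{g}^m$ and $\mfr{g}^{-m+1}/\mfr{g}^{-r+1}$ are free $\oo/\pp^{\lceil (m-r)/\ell\rceil}$-modules of equal rank coming from the $(\ell^2-1)$-dimensional $K$-structure of $\mfr{g}$, shows $|\g^r_m|=|\g^{-m+1}_{-r+1}|$, so it suffices to prove injectivity of $\Phi$. Given a nonzero $x\in\g^r_m$ with lift $X\in\mfr{g}^r\setminus\mfr{g}^m$, I would decompose $X$ along an $\oo_L$-basis of $\mfr{g}$ compatible with the $\nu$-grading (essentially using basis elements of the form $\nu^j\zeta$ with $\zeta$ ranging over a basis of $\oo_L$ adapted to the trace-zero splitting), pick out the $\nu^{j_0}$-component of $X$ of minimal valuation $j_0\in[r,m-1]$, and choose $Y\in\mfr{g}^{-m+1}$ of the form $\nu^{\ell-j_0}\eta$ for $\eta\in\oo_L$ suitably selected so that the product $XY$ contains a single nonvanishing $\nu^\ell t$-term with $\Tr_{L/K}(t)\in\oo^\times$. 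Then $\pi^{-1}\Trd(XY)\in\oo^\times$ and $\Psi\bigl(\pi^{-1}\Trd(XY)\bigr)\neq 1$, proving $\Phi(x)\neq 0$.

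The main obstacle will be the non-degeneracy step: orchestrating the choice of basis of $\mfr{g}$ relative to the $\nu$-grading, and in particular handling the two cases according to whether $\ell\mid j_0$ (so the relevant piece of $\mfr{g}$ is trace-zero inside $L$) or not (where $\Trd$ vanishes automatically on the pure $\nu^{j_0}$-component), is where the careful case analysis lives and where one must verify that the image of $\Trd$ can be made to land in $\pp\setminus\pp^2$ by a suitable choice of $Y$.
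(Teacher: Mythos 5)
Your proof has a genuine error in the well-definedness step, rooted in a misidentification of the kernel of $\Psi$.

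You assert $\ker(\Psi|_\oo)=\pp$. This is a literal reading of the formula in~\eqref{equation:Psi-definition}, but it contradicts both the paper's stated requirement that $\Psi$ ``factors through $K/\oo$'' and what is actually needed for the lemma to hold: the intended property is that $\Psi$ vanishes on all of $\oo$ (so $\ker(\Psi)=\oo$), while $\Psi(\pi^{-1})\neq 1$. Taking your reading at face value, you reduce well-definedness to showing $\pi^{-1}\Trd(X'Y)\in\pp$ for $X'\in\mfr{g}^m$, $Y\in\mfr{g}^{-m+1}$, and argue that the trace-zero constraints ``force the lowest contribution one $\pi$-power deeper.'' This claim is false. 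Take $\ell\nmid m$, $X'=\nu^m\in\mfr{g}^m$ and $Y=\nu^{\ell-m}\in\mfr{g}^{-m+1}$ (both automatically traceless since $\ell$ divides neither exponent): then $X'Y=\nu^\ell=\pi$ and $\pi^{-1}\Trd(X'Y)=\ell\in\oo^\times$, a unit. So $\pi^{-1}\Trd(X'Y)$ need not lie in $\pp$; it only lies in $\oo$, which is exactly what the single estimate $\Trd(\PP^k)\subseteq\pp^{\ceil{k/\ell}}$ gives and exactly what is needed once one knows $\ker(\Psi)=\oo$. The paper's proof of well-definedness is just this one-line estimate.

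For non-degeneracy your strategy (inject $\g^r_m$ into the dual of $\g^{-m+1}_{-r+1}$ and count) is legitimate in principle, but you have only sketched the hard part and acknowledge the case analysis on $\ell\mid j_0$ versus $\ell\nmid j_0$ remains to be done; in the $\ell\mid j_0$ case you would additionally need to verify both that $\eta$ can be chosen trace-zero and that $\Tr_{L/K}(t_0\eta)$ is still a unit, which requires knowing $\bar t_0\notin\dbF_q$, a nontrivial extra observation. The paper sidesteps all of this: for a lift $X\in\mfr{g}^r\setminus\mfr{g}^m$ it sets $Y'=\tfrac{1}{\ell}X^{-1}$ (which lies in $\PP^{-m+1}$ since $\ell\in\OO^\times$ and $\val(X)<m/\ell$), replaces $Y'$ by the traceless $Y=Y'-\tfrac{1}{\ell}\Trd(Y')$, and computes $\pi^{-1}\Trd(XY)=\pi^{-1}\Trd(XY')=\pi^{-1}\notin\ker(\Psi)$ in one stroke, with no recourse to $\nu$-expansions or a choice of basis. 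The paper's cardinality identity $\abs{\g^r_m}=\abs{\g^{-m+1}_{-r+1}}$ is also obtained more directly from the explicit formula $\abs{\g^r_m}=q^{\ell(m-r)-(\ceil{m/\ell}-\ceil{r/\ell})}$ together with $\ceil{\tfrac{-m+1}{\ell}}=-\ceil{\tfrac{m}{\ell}}+1$, rather than via a free-module argument that would itself need justification.
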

\begin{proof}

%We sketch the proof of Lemma~\ref{lem:dualpairing} and refer the reader to Appendix~\ref{appendix:proofdualpairing} for a complete proof. 
First note that for any $k\in\dbZ$ we have that $\Trd(\PP^k)\subseteq\pp^{\ceil{k/\ell}},$ as can be seen in \eqref{equa:matrixrepn}. In particular, we have that $\val\left(\pi^{-1}\Trd(XY)\right)\ge 0$ whenever $X\in \mfr{g}^m$ and $Y\in\mfr{g}^{-m+1}$ or $X\in \mfr{g}^r$ and $Y\in\mfr{g}^{-r+1}$. The unambiguity of the definition of $\gen{\cdot,\cdot}^r_m$ follows from this, along with the fact that $\Psi$ vanishes on $\oo$ (see~\eqref{equation:Psi-definition}). 

To prove non-degeneracy, let $x\in \g^r_m$ be non-zero and pick a lift $X\in\mfr{g}^r\setminus\mfr{g}^m$. Consider the element $Y'=\frac{1}{\ell}X^{-1}. $ Since $\ell\notin\PP$ and $\val(X)\le\frac{m}{\ell}$ we have that $Y'\in \PP^{-m+1}$. Taking $Y=\left(Y'-\frac{1}{\ell}\Trd(Y')\right)\in \mfr{g}^{-m+1}$ we have that
\[\pi^{-1}\Trd(XY)=\pi^{-1}\Trd(XY')=\pi^{-1}\notin\ker(\Psi).\]
Non-degeneracy follows by picking $y=Y+\mfr{g}^{-r+1}$. 

The $G$-invariance and bilinearity  of $\gen{\cdot,\cdot}^r_m$ are clear from the definition.
\end{proof}

\begin{defi}\label{defi:dualitymap} Given $r,m\in\dbN$ and $y\in \g^{-m+1}_{-r+1}$ we define the map $\phi_y^{r,m}\in(\g^r_m)^{\dual}$ by
\[\phi_y^{r,m}(x)=\gen{x,y}^r_m.\]
In order to lighten up notation, whenever the indices $r$ and $m$ are clear from context we will omit them from the notation and simply write $\phi_y=\phi_y^{r,m}$.
\end{defi}
\begin{propo}\label{propo:dual} Let $r,m\in\dbN$ be given with $r\le m$. Then, the map $y\mapsto \phi_y$ defined on $\g_{-r+1}^{-m+1}$ is a $G$-equivariant isomorphism of the group $\g^{-m+1}_{-r+1}$ onto the Pontryagin dual of $\g^r_m$, which intertwines the coadjoint action of $G$ on $\left(\g^r_m\right)^\dual$ and the adjoint action on $\g^{-m+1}_{-r+1}.$
\end{propo}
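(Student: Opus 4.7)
The plan is to deduce the four claims of the proposition---that $y\mapsto \phi_y$ is a group homomorphism, is injective, intertwines the adjoint and coadjoint actions, and is surjective---from the properties of the pairing $\gen{\cdot,\cdot}^r_m$ already established in Lemma~\ref{lem:dualpairing}, together with a cardinality count. Additivity is immediate from bilinearity of the pairing, and injectivity is precisely its non-degeneracy. The intertwining property will follow from $G$-invariance of the pairing via the computation
\[\phi_{gyg^{-1}}(x)=\gen{x,gyg^{-1}}^r_m=\gen{g^{-1}xg,y}^r_m=\phi_y(g^{-1}xg),\]
valid for every $g\in G$, $y\in\g^{-m+1}_{-r+1}$ and $x\in\g^r_m$; the right-hand side is exactly the value at $x$ of the coadjoint translate of $\phi_y$ by $g$.

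What remains is surjectivity, which I would obtain by comparing orders. Since $(\g^r_m)^\dual$ is finite of the same cardinality as $\g^r_m$, an injective homomorphism from $\g^{-m+1}_{-r+1}$ into $(\g^r_m)^\dual$ is automatically surjective once the equality $\abs{\g^r_m}=\abs{\g^{-m+1}_{-r+1}}$ is known. To compute $\abs{\g^r_m}$ I plan to use the short exact sequence
\[0\too\g^r_m\too\PP^r/\PP^m\xrightarrow{\Trd}\pp^{\ceil{r/\ell}}/\pp^{\ceil{m/\ell}}\too 0,\]
which reduces matters to verifying surjectivity of $\Trd\colon\PP^r\too\pp^{\ceil{r/\ell}}$ for every $r\in\dbZ$. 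For $\ell\mid r$ this is immediate from the $K$-linearity of $\Trd$ together with the surjectivity of the residual field trace $\dbF_{q^\ell}\too\dbF_q$; for $\ell\nmid r$ I would argue via the matrix realisation~\eqref{equa:matrixrepn} of $D$, using the formula $\Trd(x)=\Tr_{L/K}(x_0)$ and producing the missing powers of $\pi$ from the appropriate off-diagonal coefficients of $x$.

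The resulting formula is $\abs{\g^r_m}=q^{\ell(m-r)-\ceil{m/\ell}+\ceil{r/\ell}}$. Substituting $r\leftrightarrow -m+1$ and $m\leftrightarrow -r+1$ in this formula and applying the elementary identity $\ceil{(1-k)/\ell}=1-\ceil{k/\ell}$, valid for every $k\in\dbZ$ (which is easily verified by distinguishing the cases $\ell\mid k$ and $\ell\nmid k$), will deliver $\abs{\g^{-m+1}_{-r+1}}=\abs{\g^r_m}$ and complete the argument. I expect the only mildly technical point to be the surjectivity of $\Trd$ on $\PP^r$ when $\ell\nmid r$, where the explicit matrix model of $D$ has to be used rather than the mere $K$-linearity of the reduced trace.
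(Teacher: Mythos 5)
Your proposal matches the paper's proof essentially step for step: injectivity from non-degeneracy of $\gen{\cdot,\cdot}^r_m$, the intertwining from $G$-invariance, and surjectivity by counting via the formula $\abs{\g^r_m}=q^{\ell(m-r)-\ceil{m/\ell}+\ceil{r/\ell}}$ together with the ceiling identity. The only difference is that the paper simply asserts the cardinality formula, whereas you propose to derive it from the short exact sequence $0\to\g^r_m\to\PP^r/\PP^m\xrightarrow{\Trd}\pp^{\ceil{r/\ell}}/\pp^{\ceil{m/\ell}}\to 0$, which is a useful elaboration and is correct (surjectivity of $\Trd$ on $\PP^r$ follows since $\Trd(x)=\Tr_{L/K}(x_0)$, $x_0$ ranges over $\PP^r\cap L=\pp_L^{\ceil{r/\ell}}$, and $\Tr_{L/K}$ is surjective onto $\pp^{\ceil{r/\ell}}$ because $L/K$ is unramified). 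One small slip in your plan: when $\ell\nmid r$ you speak of ``producing the missing powers of $\pi$ from the appropriate off-diagonal coefficients,'' but by \eqref{equa:matrixrepn} the reduced trace depends only on the diagonal entry $x_0$; the off-diagonal $x_j$ ($j\ge1$) never enter, and no separate argument for the two residue cases is needed.
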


\begin{proof}
%Given $r,m\in\dbN$ with $r\le m$ and $y\in\g^{-m+1}_{-r+1}$ we put $\phi_y$ to denote the map defined on $\g^r_m$ by \[\phi_y(x)=\gen{x,y}^r_m,\quad(x\in\g^r_m).\] 
By Lemma~\ref{lem:dualpairing} the correspondence $y\mapsto \phi_y$ establishes a $G$-equivariant injection of $\g^{-m}_{-r+1}$ into the Pontryagin dual of $\g^r_m$. The surjectivity of the map follows since 
\[\abs{(\g^r_m)^\dual}=\abs{\g^r_m}=\abs{\g^{-m+1}_{-r+1}}.\]
The last equality follows from $\abs{\g^r_m}=q^{\ell(m-r)-\left(\ceil{\frac{m}{\ell}}-\ceil{\frac{r}{\ell}}\right)}$ for any $r,m\in\dbZ$, and the general equality $\ceil{\frac{-m+1}{\ell}}=-\ceil{\frac{m}{\ell}}+1$, which hold for all $m\in\dbZ$.
\end{proof}

In particular, by Lemma~\ref{lem:exp}, we have the following.
\begin{corol}\label{corol:dualforabelian} Let $r,m\in\dbN$ be such that $r\le m\le 2r$ an let $\vartheta\in\irr(\GG^r_m)$ be given. Then there exists a unique element $y_\vartheta\in\g^{-m+1}_{-r+1}$ such that $\vartheta=\llog^*(\phi_{y_\vartheta})$. Moreover, the correspondence $\vartheta\mapsto y_{\vartheta}$ is a $G$-equivariant bijection of $\irr(\GG^r_m)$ with $\g^{-m+1}_{-r+1}$.
\end{corol}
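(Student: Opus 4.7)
The statement is essentially a bookkeeping corollary: it assembles two isomorphisms that have already been established. My plan is simply to compose them carefully and check $G$-equivariance at each stage.

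First I would observe that since $r\le m\le 2r$, Lemma~\ref{lem:exp} gives a $G$-equivariant isomorphism of \emph{abelian} groups $\eexp\colon \g^r_m\xrightarrow{\sim} \GG^r_m$, with inverse $\llog$. In particular $\GG^r_m$ is abelian, so every irreducible character is one-dimensional, and $\irr(\GG^r_m)$ is just the Pontryagin dual $(\GG^r_m)^\dual$. Pulling back along $\llog$ therefore yields a $G$-equivariant group isomorphism
\[
\llog^*\colon (\g^r_m)^\dual\xrightarrow{\sim}\irr(\GG^r_m),\qquad \varphi\mapsto \varphi\circ\llog.
\]
Here $G$-equivariance is immediate from the $G$-equivariance of $\llog$, which in turn follows from the fact that $\llog$ is a polynomial in the matrix entries and the conjugation action is compatible with polynomial maps.

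Next I would invoke Proposition~\ref{propo:dual}, which supplies the second half of the bijection: the map
\[
\g^{-m+1}_{-r+1}\xrightarrow{\sim}(\g^r_m)^\dual,\qquad y\mapsto \phi_y,
\]
is a $G$-equivariant isomorphism intertwining the adjoint action on $\g^{-m+1}_{-r+1}$ with the coadjoint action on $(\g^r_m)^\dual$.

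Composing, the map $y\mapsto \llog^*(\phi_y)$ is a $G$-equivariant bijection $\g^{-m+1}_{-r+1}\to\irr(\GG^r_m)$. Given $\vartheta\in\irr(\GG^r_m)$, define $y_\vartheta$ to be the unique preimage; existence and uniqueness are built into the bijection, so nothing else needs to be verified. There is no serious obstacle here — the only subtlety worth flagging is the use of $m\le 2r$, which is what makes $\eexp$ into a group isomorphism (rather than a mere bijection of sets) and thus permits identifying $\irr(\GG^r_m)$ with the Pontryagin dual of $\g^r_m$ in the first place.
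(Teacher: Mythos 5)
Your proposal is correct and follows exactly the route the paper takes: since $\GG^r_m$ is abelian in the range $r\le m\le 2r$, one composes the $G$-equivariant isomorphism of Lemma~\ref{lem:exp} (identifying $\irr(\GG^r_m)$ with $(\g^r_m)^\dual$ via $\llog^*$) with the duality isomorphism $y\mapsto\phi_y$ of Proposition~\ref{propo:dual}. The paper records this as an immediate consequence and gives no separate argument, so your write-up is a faithful expansion of the same idea.
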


Here $\llog^*:(\g^r_m)^\dual\to\irr(\GG^r_m)$ stands for pre-composition with $\llog$, i.e. $\llog^*(\phi)(g)=\phi\left(\llog(g)\right)$, for $g\in \GG^r_m$ and $\phi\in(\g^r_m)^\dual$.

\subsection{Heisenberg Lifts for Subquotients of $\SL_1(D)$}\label{subsect:heisenberg}

In this subsection, we limit our description of the method of Heisenberg lifts to the study of irreducible characters of non-abelian subquotients of $\SL_1(D)$ of the form $\GG^r_{2r+1}$. For the more general description, we refer the reader to \cite{BushnellFroelich}, for a general treatment, and \cite[\S~3.2]{UriRoiPooja}, for the case of regular representations of $\GL_n(\oo)$ and $\GU_n(\oo)$.

\newcommand{\ggama}{\bm\upgamma}%In this Subsection we will describe a method which enables us to obtain information regarding the representation theory of a finite nilpotent $p$-group of nilpotency class $2$ from representations of a central subgroup, under some conditions which we describe below. 
We fix $r\in\dbN$ which is not divisible by $\ell$ and put $\Gamma^1:=\GG^{r}_{2r+1}$ and $\Gamma^2:=\GG^{r+1}_{2r+1}$. Note that $\Gamma^2$ is central in $\Gamma^1$, and that $\mfr{f}:=\Gamma^1/\Gamma^2$ is abelian. In fact, since $\ell\nmid r$, one has that $\mfr{f}$ is isomorphic to the additive group of $\dbF_{q^\ell}$ (see \cite[Theorem~7]{riehm}).

Let $\ggama_1:=\g^r_{2r+1}$ and $\ggama_2:=\g^{r+1}_{2r+1}$. In this setting, the truncated exponential map $\eexp:\ggama_1\to \Gamma^1$ is a $G$-equivariant bijection, whose restriction to $\ggama_2$ is an isomorphism onto $\Gamma^2$ (by Lemma~\ref{lem:exp}). Furthermore, this map induces an isomorphism of $\ggama_1/\ggama_2$ with $\Gamma^1/\Gamma^2$, such that the following digram commutes.

\iffalse
Let $\Gamma^1$ be a finite nilpotent $p$-group of nilpotency class $2$, and let $\Gamma^2\subseteq \Gamma^1$ be a central subgroup. Suppose $\mfr{f}:=\Gamma^1/\Gamma^2$ is abelian and is endowed with the structure of a vector space over $\dbF_q$. Assume further that $\ggama_1\supseteq\ggama_2$ are finite Lie-rings, with $\ggama_2$ abelian and $[\ggama_1,\ggama_1]\subseteq \ggama_2$ and $[\ggama_1,\ggama_2]=0$. Assume $\Gamma^1$ acts on $\ggama_1$ and there exists a  $\Gamma^1$-equivariant bijection $\eexp:\ggama_1\to\Gamma^1$, whose inverse we denote by $\llog$, mapping $\Gamma^2$ onto $\ggama_2$ and satisfying the equality
\begin{equation}\label{equaion:heisBCH}\llog(\eexp(X)\eexp(Y))=X+Y+\frac{1}{2}[X,Y],\quad\forall X,Y\in \ggama_1.\end{equation}
In particular, the restriction of $\eexp$ to $\ggama_2$ is an isomorphism of abelian groups. 

\begin{rem}Note that the equalities $\llog(1)=0$ and $\llog((a,b))=[\llog(a),\llog(b)]$ ($a,b\in \Gamma^1$) follow from \eqref{equation:heisBCH} and from the fact that $[\ggama_1,\ggama_1]\subseteq\ggama_2$.
\end{rem}

Finally, we assume that $\ggama_1/\ggama_2\cong \mfr{f}$ and that the diagram below commutes.\fi
\[\begin{tikzpicture}[description/.style={fill=white,inner sep=2pt}]
\matrix (m) [matrix of math nodes, row sep=2.8em,
column sep=3em, text height=1.5ex, text depth=0.25ex]
{\Gamma^1&\ggama_1\\
&\mfr{f}.\\
}; 
\path[->,font=\scriptsize]
(m-1-1) edge[thin] node[above] {$\llog$} (m-1-2);
	
\path[->>,font=\scriptsize]

(m-1-1) edge[thin] node[auto] {} (m-2-2)
(m-1-2) edge[thin] node[auto] {} (m-2-2)
;
\end{tikzpicture}\]
\iffalse
These conditions are summed by the following Diagram (Figure~\ref{figure:basicheisen}), in which all solid lines are group homomorphism.
\begin{figure}[h]\centering
\begin{tikzpicture}[description/.style={fill=white,inner sep=2pt}]
\matrix (m) [matrix of math nodes, row sep=1.8em,
column sep=1.8em, text height=1.5ex, text depth=0.25ex]
{\Gamma^1&\ggama_1&\mfr{f}\\
\Gamma^2&\ggama_2&\set{0}\\
}; 
\path[-,font=\scriptsize]
(m-1-1) edge[thin] node[left] {\rotatebox{90}{$\subseteq$}} (m-2-1)
(m-1-3) edge[thin] node[left] {} (m-2-3)
(m-1-2) edge[thin] node[left] {\rotatebox{90}{$\subseteq$}} (m-2-2);
\path[->,font=\scriptsize]
(m-1-1) edge[dashed] node[below] {$\llog$} node[above] {$1-1$} (m-1-2)
(m-2-1) edge[thin] node[below] {$\llog$} node[above] {$\sim$} (m-2-2)
;

\path[->>,font=\scriptsize]

(m-1-2) edge[thin] node[auto] {} (m-1-3)
(m-2-2) edge[thin] node[auto] {} (m-2-3)
(m-1-1) edge[bend left=55,thin] (m-1-3)
(m-2-1) edge[bend right=55,thin] (m-2-3);

\end{tikzpicture}\caption{Basic Set-up for Applying Heisenberg Lifts.}\label{figure:basicheisen}\end{figure}

\begin{rem}In our application, the groups $\Gamma^2$ will be taken to be of the form $\GG^{k+1}_{2k+1}$, with $k\in\dbN$, and $\Gamma^1$ will be of the form $\GG^{k}_{2k+1}$. In accordance with Lemmas~\ref{lem:exp} and \ref{lem:BCH}, the corresponding Lie-rings would be $\ggama_i:=\g_{2k+1}^{k+(i-1)}$, for $i=1,2$. In all cases discussed, as we will see, the finite vector-space $\mfr{f}$ is identified with $\Gamma^1/\Gamma^2\equiv\dbF_{q^\ell}$.
\end{rem}
\fi

Let $\vartheta$ be an irreducible character of $\Gamma^2$. We describe the set of characters of $\Gamma^1$ which lie above $\vartheta$. As $\Gamma^2\cong\ggama_2$, we have that $\vartheta=\llog^*(\theta)$ for some $\theta\in\left(\ggama_2\right)^\dual$. Define a map on $\Gamma^1\times \Gamma^1$ by \begin{equation}\label{equa:Bthetadefi}B_\theta(a,b):=
\llog^*(\theta)\left((a,b)\right),\quad (a,b\in \Gamma^1),\end{equation}
where $(a,b)=aba^{-1}b^{-1}$ is the group commutator.

As the derived subgroup of $\Gamma^1$ is contained in $\Gamma^2$ and $\Gamma^2$ is abelian, this map is well-defined and corresponds uniquely to an antisymmetric bilinear form 
\[\beta_{\theta}:\mfr{f}\times\mfr{f}\to\dbF_q,\]
such that the diagram \eqref{equa:Pontryadgincomm} commutes, 
where $\psi\in\left(\dbF_q\right)^\dual$ is some fixed non trivial-character of $\dbF_q$.
\begin{equation}\label{equa:Pontryadgincomm}\begin{tikzpicture}[description/.style={fill=white,inner sep=2pt}]
\matrix (m) [matrix of math nodes, row sep=2em,
column sep=.7em, text height=1ex, text depth=0.25ex]
{\Gamma^1/\Gamma^2&\times&\Gamma^1/\Gamma^2&~&\dbC^\times\\
\mfr{f}&\times&\mfr{f}&~&\dbF_q\\
}; 
\path[->,font=\scriptsize]
(m-1-3) edge[thin] node[auto] {$B_\theta$} (m-1-5)
(m-2-3) edge[thin] node[above] {$\beta_\theta$} (m-2-5)
(m-2-5) edge[thin] node[right] {$\psi$} (m-1-5);

\path[-,font=\scriptsize]
(m-1-1) edge[double equal sign distance, thin] node[right] {\rotatebox{90}{$\sim$}} (m-2-1)
(m-1-3) edge[double equal sign distance, thin] node[right] {\rotatebox{90}{$\sim$}} (m-2-3);

\end{tikzpicture}
\end{equation}

Let $\mfr{r}_\theta:=\set{\mbf{x}\in\mfr{f}\mid\beta_\theta(\mbf{x},\mbf{y})=0,\:\text{for all} \mbf{y}\in\mfr{f}}$ be the radical of $\beta_\theta$. Let $\msf{R}_\theta\subseteq\ggama_1$ be the preimage of $\mfr{r}_\theta$ under the reduction map. As the group $\msf{R}_\theta$ is finite and abelian, the character $\theta$ extends to $\msf{R}_{\theta}$ in $\abs{\msf{R}_\theta:\ggama_2}=\abs{\mfr{r}_\theta}$ many ways. Put $R_\theta=\eexp(\msf{R}_\theta)$. Note that since $R_\theta$ coincides with the preimage of $\mfr{r}_\theta$ under the map $\Gamma^1\project \mfr{f}$, we have that $R_\theta$ is a normal subgroup in $\Gamma^1$. Pick an extension $\theta'\in\left(\mfr{r}_\theta\right)^\dual$ of $\theta$, and define $\vartheta':=\llog^*(\theta').$ By Lemma~\ref{lem:formulasexplog} we have that 
\[\vartheta'(a\cdot b)=\theta'\left(\llog(a\cdot b)\right)=\theta'\left(\llog(a)+\llog(b)+\tfrac{1}{2}[\llog(a),\llog(b)]\right)=\vartheta'(a)\vartheta'(b)\]
for any $a,b\in R_\theta$, as $\theta'$ vanishes on commutators in $\msf{R}_\theta$. Moreover, the character $\vartheta'$ is invariant in $\Gamma^1$.

The map $\beta_\theta$ induces a symplectic form on the space $\mfr{f}_\theta:=\mfr{f}/\mfr{r}_\theta$, which we denote by $\beta_\theta$ as well. Let $\mfr{j}_\theta\subseteq\mfr{f}$ be such that $\mfr{j}_\theta/\mfr{r}_\theta$ is a maximal isotropic subspace of $\mfr{f}_\theta$.% That is to say- for any $\mbf{x},\mbf{y}\in\mfr{j}_\theta/\mfr{r}_\theta$ we have that $\beta_\theta(\mbf{x},\mbf{y})=0$ and $\mfr{j}_\theta$ is maximal with respect to this condition. 
 ~It is known that \[\dim_{\dbF_q}\mfr{j}_\theta/\mfr{r}_\theta=
 \dim_{\dbF_q}\mfr{f}/\mfr{j}_\theta=\tfrac{1}{2}\dim_{\dbF_q}\mfr{f}_\theta.\]

Let $\msf{J}_\theta\subseteq \ggama_1$ be the preimage of $\mfr{j}_\theta$ under the reduction map, and let $J_\theta:=\eexp(\msf{J}_\theta)\subseteq \Gamma^1$.
Pick an extension $\theta''$ of $\theta'$ to $\msf{J}_\theta$. As in the case of $R_\theta$, we have that $J_\theta$ is a normal subgroup of $\Gamma^1$. Furthermore,  by Lemma~\ref{lem:formulasexplog}, as in the case of $\vartheta'$, we know that $\vartheta'':=\llog^*(\theta'')$ is a linear character of $J_\theta$.

The main components of the construction are summarized in diagram in Figure~\ref{figure:Heisenberg2}.

\begin{figure}[H]\centering
\begin{tikzpicture}[description/.style={fill=white,inner sep=2pt}]
\matrix (m) [matrix of math nodes, row sep=1.3em,
column sep=1.8em, text height=1.5ex, text depth=0.25ex]
{\Gamma^1&\ggama_1&\mfr{f}\\
J_\theta&\msf{J}_\theta&\mfr{j}_\theta\\
R_\theta&\msf{R}_\theta&\mfr{r}_\theta\\
\Gamma^2&\ggama_2&\set{0}\\
}; 
\path[-,font=\scriptsize]
(m-1-1) edge[thin] node[auto] {} (m-2-1)
(m-2-1) edge[thin] node[auto] {} (m-3-1)
(m-3-1) edge[thin] node[auto] {} (m-4-1)

(m-1-3) edge[thin] node[auto] {} (m-2-3)
(m-2-3) edge[thin] node[auto] {} (m-3-3)
(m-3-3) edge[thin] node[auto] {} (m-4-3)

(m-1-2) edge[thin] node[auto] {} (m-2-2)
(m-2-2) edge[thin] node[auto] {} (m-3-2)
(m-3-2) edge[thin] node[auto] {} (m-4-2);
\path[->,font=\scriptsize]

(m-1-1) edge[dashed] node[below] {$\llog$} (m-1-2)
(m-2-1) edge[dashed] node[below] {$\llog$} (m-2-2)
(m-3-1) edge[dashed] node[below] {$\llog$} (m-3-2)
(m-4-1) edge[thin] node[above] {$\sim$} node[below] {$\llog$} (m-4-2);
\path[->>,font=\scriptsize]
(m-1-2) edge[thin] node[auto] {} (m-1-3)
(m-2-2) edge[thin] node[auto] {} (m-2-3)
(m-3-2) edge[thin] node[auto] {} (m-3-3)
(m-4-2) edge[thin] node[auto] {} (m-4-3)
;
\end{tikzpicture}\caption{Diagram for Heisenberg Lifts}\label{figure:Heisenberg2}\end{figure}
Put $\sigma=\ind_{J_\theta}^{\Gamma^1}(\vartheta'')$. The following holds~\cite[(8.3)]{BushnellFroelich}.
\begin{propo}\label{propo:lift}\begin{enumerate}
\item The character $\sigma$ is irreducible and independent of the choice of $J_\theta$ and $\theta''$.
\item The characters of $\Gamma^1$ lying above $\vartheta$ are in bijection with the set of extensions of $\theta$ to $\msf{R}_\theta$.
\item Any character of $\Gamma^1$ is obtained in this manner.
\end{enumerate}
\end{propo}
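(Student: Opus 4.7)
The strategy is to combine Mackey's irreducibility criterion for induction from the normal subgroup $J_\theta$ with a sum-of-squares counting argument that simultaneously yields exhaustion and independence. The three ingredients I would lean on are: the class-$2$ nilpotency of $\Gamma^1$ (so $[\Gamma^1,\Gamma^1]\subseteq\Gamma^2$), the truncated BCH identity of Lemma~\ref{lem:formulasexplog} translating group commutators into Lie brackets, and the symplectic structure of $\beta_\theta$ on $\mfr{f}_\theta$ with $\mfr{j}_\theta/\mfr{r}_\theta$ a Lagrangian subspace.

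\textbf{Step 1: Well-definedness of $\vartheta'$ and $\vartheta''$.} I would first check that $\theta$ indeed extends linearly to $\msf{R}_\theta$ and thence to $\msf{J}_\theta$. By class-$2$ nilpotency $[\msf{J}_\theta,\msf{J}_\theta]\subseteq\ggama_2$, and for $j_1,j_2\in\msf{J}_\theta$ the value $\theta([j_1,j_2])$ corresponds via the diagram \eqref{equa:Pontryadgincomm} and Lemma~\ref{lem:formulasexplog} to $\psi(\beta_\theta(\bar j_1,\bar j_2))=1$, since $\bar j_1,\bar j_2\in\mfr{j}_\theta/\mfr{r}_\theta$ and this subspace is isotropic; the same argument with $\mfr{r}_\theta$ in place of $\mfr{j}_\theta$ handles $\theta'$. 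The characters $\vartheta'$ and $\vartheta''$ are then obtained as $\llog^*$-pullbacks.

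\textbf{Step 2: Irreducibility of $\sigma$.} The key identity is that for $g\in\Gamma^1$ and $j\in J_\theta$,
\[
({}^g\vartheta'')(j)=\vartheta''(j)\cdot\vartheta\bigl((g,j)\bigr)=\vartheta''(j)\cdot\psi\bigl(\beta_\theta(\bar g,\bar j)\bigr),
\]
using that $(g,j)\in\Gamma^2$ is central and that $\vartheta|_{\Gamma^2}$ recovers the bilinear pairing $B_\theta$. Because $\mfr{j}_\theta/\mfr{r}_\theta$ is a maximal isotropic (hence self-orthogonal) subspace of the symplectic space $\mfr{f}_\theta$, this twist is trivial on all of $J_\theta$ if and only if $\bar g\in\mfr{j}_\theta$, i.e.\ $g\in J_\theta$. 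The inertia group of $\vartheta''$ in $\Gamma^1$ is therefore $J_\theta$ itself, and Mackey's criterion yields that $\sigma=\ind_{J_\theta}^{\Gamma^1}(\vartheta'')$ is irreducible of degree $[\Gamma^1:J_\theta]=q^{d/2}$ with $d=\dim_{\dbF_q}\mfr{f}_\theta$.

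\textbf{Step 3: Bijection with extensions, independence, and exhaustion.} Since $\vartheta'$ is $\Gamma^1$-invariant, Mackey decomposition forces $\sigma|_{R_\theta}=q^{d/2}\cdot\vartheta'$, so distinct extensions $\theta'$ of $\theta$ give distinct $\sigma$'s; there are $[\msf{R}_\theta:\ggama_2]=|\mfr{r}_\theta|$ of them. On the other hand, since $\vartheta$ is central (hence $\Gamma^1$-invariant), standard Clifford theory gives $\sum_{\chi\mid\vartheta}\chi(1)^2=[\Gamma^1:\Gamma^2]=|\mfr{f}|$, where the sum runs over $\chi\in\irr(\Gamma^1)$ lying above $\vartheta$. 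Our construction contributes $|\mfr{r}_\theta|\cdot q^d=|\mfr{r}_\theta|\cdot|\mfr{f}_\theta|=|\mfr{f}|$, saturating this bound. This proves exhaustion (claim (3)), the bijection in claim (2), and, as a free corollary, independence of $\sigma$ from the auxiliary choices of $J_\theta$ and $\theta''$: any two constructions based on the same $\theta'$ must coincide because the space above $\vartheta'$ contains a unique irreducible.

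\textbf{Expected obstacle.} The only non-formal step is the commutator-to-symplectic identity ${}^g\vartheta''(j)=\vartheta''(j)\cdot\psi(\beta_\theta(\bar g,\bar j))$ in Step~2; verifying it cleanly requires combining \eqref{equation:bch}, the equality $\llog((g,j))=[\llog g,\llog j]$, and the compatibility of $\theta$ with the quotient pairing $\beta_\theta$, but this unpacks to a direct computation once the diagram \eqref{equa:Pontryadgincomm} is in hand.
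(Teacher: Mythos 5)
The paper does not prove this proposition; it is quoted directly from Bushnell--Fr\"ohlich~\cite[(8.3)]{BushnellFroelich}, so there is no internal proof to compare against. Your argument is a correct and essentially standard self-contained proof of the Stone--von~Neumann/Heisenberg-lift structure theorem in this finite setting, and it fits the paper's framework. Step~2 is the heart of the matter: the identity $g^{-1}jg=(g^{-1},j)\,j$ together with centrality of $\Gamma^2$ and Lemma~\ref{lem:formulasexplog} gives ${}^g\vartheta''=\vartheta''\cdot\bigl(\psi\circ\beta_\theta(-\bar g,\cdot)\bigr)$ on $J_\theta$ (you dropped an inverse/sign that is harmless for the conclusion), and maximal isotropy of $\mfr{j}_\theta/\mfr{r}_\theta$ in the symplectic quotient forces the inertia group of $\vartheta''$ to be exactly $J_\theta$; since $J_\theta\trianglelefteq\Gamma^1$ this yields irreducibility by Clifford theory. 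Step~3's Lagrangian-dimension count, $\sigma(1)^2=|\mfr{f}_\theta|$, combined with $\sum_{\chi\mid\vartheta}\chi(1)^2=[\Gamma^1:\Gamma^2]=|\mfr{f}|$ for the $\Gamma^1$-invariant linear $\vartheta$, and the $\Gamma^1$-invariance of $\vartheta'$ (because $\mfr{r}_\theta$ is the radical) forcing $\sigma|_{R_\theta}=[\Gamma^1:J_\theta]\,\vartheta'$, does indeed deliver exhaustion, the bijection, and independence simultaneously. One small point worth making explicit: the conclusion of Step~3 is really that, for each fixed extension $\theta'$, there is exactly one irreducible of $\Gamma^1$ above $\vartheta'$ (else the sum of squares would exceed $|\mfr{f}|$), and this is what shows $\sigma$ is independent of the auxiliary choices of $J_\theta$ and $\theta''$; as you wrote it, this is implicit but the reader has to reconstruct it. Otherwise the proposal is sound and matches what one finds in the cited reference.
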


\subsubsection*{Computation of Radicals}

The specific case under consideration allows us to obtain a description of the radicals $\mfr{r}_\theta$, which were presented in the previous section, in the case where $\vartheta$ is an irreducible character of $\Gamma^2$ which does not trivialize on $\GG^{2r}_{2r+1}$. Namely, we show the following.
\begin{propo}\label{propo:radicals} Let $r\in\dbN$ be such that $\ell\nmid r$ and let $\vartheta\in\irr(\Gamma^2)$ be non-trivial on $\GG^{2r}_{2r+1}$. Let $\beta_\theta$ be the associated anti-symmetric bilinear form induced from \eqref{equa:Bthetadefi}, and let $\mfr{r}_\theta\subseteq\mfr{f}$ be its radical. The following hold.
\begin{enumerate}
\item If $\ell=2$ then $r_\theta=\set{0}$.
\item Otherwise, if $\ell$ is odd, then $\mfr{r}_\theta$ is a one-dimensional subspace of $\mfr{f}$.
\end{enumerate}
\end{propo}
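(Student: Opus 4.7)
The plan is to translate the form $\beta_\theta$ into an explicit Lie-theoretic condition on a lift $Y$ of the element $y\in\g^{-2r}_{-r}$ corresponding to $\theta$ under the duality of Proposition~\ref{propo:dual}, and then to read off the dimension of the radical from a short computation in the $\nu$-expansion, finishing with a Hilbert~90 argument.

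\emph{Step 1 (Rewrite $B_\theta$).} Write $\theta=\phi_y^{r+1,2r+1}$ for a unique $y\in\g^{-2r}_{-r}$ via Proposition~\ref{propo:dual}. Pick a lift $Y\in\mfr{g}^{-2r}$. By the truncated Baker--Campbell--Hausdorff formula \eqref{equation:expAdlogad}, for $a=\eexp(X)$, $a'=\eexp(X')\in\Gamma^1$ with $X,X'\in\mfr{g}^r$,
\[
B_\theta(a,a')=\theta\bigl(\llog((a,a'))\bigr)=\theta\bigl([X,X']+\mfr{g}^{2r+1}\bigr)=\Psi\bigl(\pi^{-1}\Trd([X,X']\,Y)\bigr).
\]
Using cyclicity of $\Trd$, rewrite this as $\Psi\bigl(\pi^{-1}\Trd(X\cdot[X',Y])\bigr)$.

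\emph{Step 2 (Radical as a kernel of $\mathrm{ad}_Y$).} Note that $[X',Y]\in\mfr{g}^{-r}$ since $X'\in\mfr{g}^r$ and $Y\in\mfr{g}^{-2r}$. The pairing $\gen{\cdot,\cdot}^r_{r+1}:\g^r_{r+1}\times\g^{-r}_{-r+1}\to\dbC^\times$ of Lemma~\ref{lem:dualpairing} is non-degenerate, so the preceding vanishes for every $X\in\mfr{g}^r$ if and only if $[X',Y]\in\mfr{g}^{-r+1}$. The hypothesis that $\vartheta$ is non-trivial on $\GG^{2r}_{2r+1}$ translates, via the duality and the pairing $\gen{\cdot,\cdot}^{2r}_{2r+1}$, to $Y\notin\mfr{g}^{-2r+1}$, so the leading coefficient $\bar y$ in the $\nu$-expansion of $Y$ is a nonzero element of $\TT_L$. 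One obtains
\[
\mfr{r}_\theta=\ker\Bigl(\mathrm{ad}_Y:\mfr{f}=\mfr{g}^r/\mfr{g}^{r+1}\longrightarrow\mfr{g}^{-r}/\mfr{g}^{-r+1}\Bigr).
\]

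\emph{Step 3 (Compute the leading $\nu$-term).} Write $X'\equiv\nu^r\bar x\pmod{\PP^{r+1}}$ with $\bar x\in\TT_L$ and $Y\equiv\nu^{-2r}\bar y\pmod{\PP^{-2r+1}}$. Using $d\nu^k=\nu^k\sigma^k(d)$ for $d\in L$, a direct calculation gives
\[
[X',Y]\equiv\nu^{-r}\bigl(\sigma^{-2r}(\bar x)\bar y-\sigma^{r}(\bar y)\bar x\bigr)\pmod{\PP^{-r+1}}.
\]
Since $\ell\nmid r$, the map $\PP^{-r}/\PP^{-r+1}\to\mfr{g}^{-r}/\mfr{g}^{-r+1}$ is an isomorphism, so $\bar x$ represents a radical class precisely when $\sigma^{-2r}(\bar x)\bar y=\sigma^r(\bar y)\bar x$ in $\dbF_{q^\ell}$.

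\emph{Step 4 (Case $\ell=2$).} Here $-2r$ is divisible by $\ell$, so the leading $\nu$-term of $Y$ contributes $\pi^{-r}\Tr_{L/K}(\bar y)$ to $\Trd(Y)$; the condition $\Trd(Y)=0$ forces $\Tr_{L/K}(\bar y)=0$, whence $\sigma(\bar y)=-\bar y$. Together with $\sigma^{-2r}=\mathrm{id}$ on $\dbF_{q^2}$, the condition in Step~3 becomes $\bar x\cdot\bar y=-\bar y\cdot\bar x$, i.e.\ $2\bar x\bar y=0$. As $p\ne 2$ and $\bar y\ne 0$, this forces $\bar x=0$, giving $\mfr{r}_\theta=\{0\}$.

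\emph{Step 5 (Case $\ell$ odd, Hilbert~90).} Now $\gcd(-2r,\ell)=1$, so $\tau:=\sigma^{-2r}$ generates $\Gal(L/K)$ and its fixed subfield on residues is $\dbF_q$. Setting $c:=\sigma^r(\bar y)\bar y^{-1}\in\dbF_{q^\ell}^\times$, the condition in Step~3 reads $\tau(\bar x)=c\,\bar x$. A direct computation (reindexing the product defining the norm) shows $\Nr_{\dbF_{q^\ell}/\dbF_q}(c)=1$, so by Hilbert~90 a solution $\bar x_0\in\dbF_{q^\ell}^\times$ exists. The ratio of any two solutions lies in $\dbF_{q^\ell}^{\tau}=\dbF_q$, so the full solution set is $\bar x_0\cdot\dbF_q$, a one-dimensional $\dbF_q$-subspace. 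Hence $\dim_{\dbF_q}\mfr{r}_\theta=1$.

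The main obstacles I anticipate are bookkeeping: first, correctly tracking the Galois twists arising from the relation $d\nu^k=\nu^k\sigma^k(d)$ when expanding $[X',Y]$ in the non-commutative $\nu$-expansion; and second, in the even case, extracting from the traceless condition on $Y$ the key identity $\sigma(\bar y)=-\bar y$, which is exactly what allows $p\ne 2$ to collapse the radical to zero.
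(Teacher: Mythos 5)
Your proof is correct and follows essentially the same route as the paper's: both compute the explicit commutator form in the $\nu$-expansion, reduce the radical to an algebraic condition on residues in $\dbF_{q^\ell}$, split on the parity of $\ell$, and invoke Hilbert~90 (and $p\ne 2$) in the respective cases. The reformulation of $\mfr{r}_\theta$ as $\ker(\ad_Y)$ via the non-degenerate pairing of Lemma~\ref{lem:dualpairing} is a slightly cleaner packaging of the same duality computation, and your $\ell=2$ argument is marginally more direct (extracting $\sigma(\bar y)=-\bar y$ from tracelessness at once rather than locating both factors in $\mbf{i}\dbF_q$), but there is no substantive difference in method.
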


Let us make some preparations to the proof of Proposition~\ref{propo:radicals}. By Corollary~\ref{corol:dualforabelian}, the character $\vartheta$ is of the form $\llog^*(\phi_y)$ for some $y=y_\vartheta\in\g^{-2r}_{-r+1}.$ Also, the assumption that $\vartheta$ does not vanish on $\GG^{2r}_{2r+1}$ is equivalent to the assumption that $\phi_y$ does not vanish on $\g^{2r}_{2r+1}$ and implies that $\val(y)=-2r$. Pick a lift $Y\in\mfr{g}^{-2r}$ of $y$, and let $Y'\in\OO^\times$ be such that $Y=Y'\nu^{-2r}$. 

Let $x_1,x_2\in \Gamma^1$ be arbitrary and let $X_1,X_2\in\OO$ be such that \[x_i\equiv\eexp( X_i\nu^{r})\pmod{ G^{2r+1}},\quad(i=1,2).\]

Unfolding the definitions of $\phi_y$ and $B_\theta$ (Lemma~\ref{lem:dualpairing} and \eqref{equa:Bthetadefi}), we see that
\begin{equation}\tag{\ref{equa:Bthetadefi}$\:'$}\label{equation:BetaExplicit}B_\theta(x_1,x_2)=\phi_y\left(\log\left( (x_1,x_2)\right)\right)=\Psi\left(\pi^{-1}\cdot\Trd\left(\left(\left[X_1\nu^r,X_2\nu^r\right]\right) \cdot Y'\nu^{-2r}\right)\right).\end{equation}

By taking the character $\psi$ in \eqref{equa:Bthetadefi} to be given by $\lambda\mapsto \Psi(\pi^{-1}\lambda)+\oo$, we obtain an explicit description of the form $\beta_\theta$ by
\begin{equation}
\label{equa:betadef}\beta_\theta(\mbf{x}_1,\mbf{x}_2)=\Tr_{\dbF_{q^\ell}\mid\dbF_q}\left(\left(\tau^2(\mbf{x}_1)\tau(\mbf{x}_2)-\tau(\mbf{x}_1)\tau^2(\mbf{x}_2)\right)\mbf{y}'\right),\quad(\mbf{x}_1,\mbf{x}_2\in\mfr{f}),\end{equation}
where $\mbf{y}'=Y'+\PP\in\dbF_{q^\ell}^\times$ and $\tau\in\Gal(\dbF_{q^\ell}\mid\dbF_q)$ is given by reducing the map $X\mapsto \nu^{-r}X\nu^r$ modulo $\PP$.

\begin{proof}[Proof of Proposition~\ref{propo:radicals}]\begin{enumerate}
\item
Assume $\ell=2$. By assumption, $\tau$ is the unique non-trivial element of $\Gal(\dbF_{q^2}\mid\dbF_q)$, and $\eqref{equa:betadef}$ can be rewritten as
\begin{equation}\tag{\theequation$\:'$}
\label{equa:betadef2}\beta_\theta(\mbf{x}_1,\mbf{x}_2)=\Tr_{\dbF_{q^2}\mid\dbF_q}\left(\left(\mbf{x}_1\tau(\mbf{x}_2)-\tau(\mbf{x}_1)\mbf{x}_2\right)\mbf{y}'\right),\quad(\mbf{x}_1,\mbf{x}_2\in\mfr{f}).\end{equation}
As $\dim_{\dbF_q}\dbF_{q^2}=2$, we have that the space $\mfr{sl}_1(\dbF_{q^2}\mid\dbF_q)$ of elements of trace $0$ is $1$-dimensional. Let $\mbf{i}\in\dbF_{q^2}$ generate this space. By definition we have that $\Tr_{\dbF_{q^2}\mid\dbF_q}(\mbf{i})=\mbf{i}+\tau(\mbf{i})=0$. In particular, $\mbf{i}^2=-\Nr_{\dbF_{q^2}\mid\dbF_q}(\mbf{i})\in\dbF_q^\times$.

Note that by the assumption that $\ell=2$ we have that $\nu^2=\pi$ and hence
%Note that by assumption that $Y$ is traceless, we must have that 
$\Trd(Y')=\Trd(Y\nu^{2r})=\pi^{r}\Trd(Y)=0$. In particular, since the trace and reduction maps commute, we have that $\Tr_{\dbF_{q^2}\mid\dbF_q}(\mbf{y}')=0$, and consequently $\mbf{y'}\in\mbf{i}\dbF_q$. 

Additionally, it holds that ${\Tr_{\dbF_{q^2}\mid\dbF_q}\left(\mbf{x}_1\tau(\mbf{x}_2)\right)=
\Tr_{\dbF_{q^2}\mid\dbF_q}\left(
\tau(\mbf{x}_1)\mbf{x}_2\right)}$ for any $\mbf{x}_1,\mbf{x}_2\in\dbF_{q^2}$, and hence $\mbf{x}_1\tau(\mbf{x}_2)-\tau(\mbf{x}_1)\mbf{x}_2$ is also an element of $\mbf{i}\dbF_q$. In particular, we get that \[\left(\mbf{x}_1\tau(\mbf{x}_2)-\tau(\mbf{x}_1)\mbf{x}_2\right)\mbf{y}'\in\mbf{i}^2\dbF_q=\dbF_q,\]
Applying this into \eqref{equa:betadef2}, we get that $\beta_\theta(\mbf{x}_1,\mbf{x}_2)=2\left(\mbf{x}_1\tau(\mbf{x}_2)-\tau(\mbf{x}_1)\mbf{x}_2\right)\mbf{y}'$. 

One easily verifies that given $\mbf{x}_1\in\dbF_{q^2}$ we may find $\mbf{x}_2\in\dbF_{q^2}$ which satisfies $\frac{\mbf{x}_1}{\tau(\mbf{x}_1)}\ne\frac{\mbf{x}_2}{\tau(\mbf{x}_2)}$, and hence (since $p\ne 2$) $\beta_\theta(\mbf{x}_1,\mbf{x}_2)\ne 0$. Thus $\mfr{r}_\theta=\set{0}$, and the first assertion is proved.
\item 
Assume now that $\ell$ is odd. Note that by the invariance of $\Tr_{\dbF_{q^\ell}\mid\dbF_q}$ under $\Gal(\dbF_{q^\ell}\mid\dbF_q)$ we have that 
\begin{align}\label{equa:radicalequivcond}
\beta_\beta(\mbf{x}_1,\mbf{x}_2)&=\Tr_{\dbF_{q^\ell}\mid\dbF_q}\left(\left(\tau^2(\mbf{x}_1)\tau(\mbf{x}_2)-\tau(\mbf{x}_1)\tau^2(\mbf{x}_2)\right)\mbf{y}'\right)&&\text{(by definition)}\notag\\
&=\Tr_{\dbF_{q^\ell}\mid\dbF_q}\left(\left(\mbf{y}'\tau^2(\mbf{x}_1)-\mbf{x}_1\tau^{-1}(\mbf{y}')\right)\tau(\mbf{x}_2) \right)&&\text{(by invariance under $\tau$)}
\end{align}
for all $\mbf{x}_1,\mbf{x}_2\in\dbF_{q^\ell}$. As the trace pairing is non-degenerate, by \eqref{equa:radicalequivcond},
\begin{align*}\mbf{x}_1\in \mfr{r}_\theta&\iff\beta_\theta(\mbf{x}_1,\mbf{x}_2)=0,\quad\forall \mbf{x}_2\in\dbF_{q^\ell}\\
&\iff \mbf{y}'\tau^2(\mbf{x}_1)=\tau^{-1}(\mbf{y}')\mbf{x}_1,\end{align*}
which occurs if and only if \[\mbf{x}_1=0\quad\text{or}\quad  \frac{\mbf{x}_1}{\tau^2(\mbf{x}_1)}=\frac{\mbf{y}'}{\tau^{-1}(\mbf{y}')}.\]

Since the map $\mbf{x}\mapsto\frac{\mbf{x}}{\tau^2(\mbf{x})}$ is a surjective homomorphism  onto $\SL_1(\dbF_{q^\ell}\mid\dbF_q)$ (e.g. by Hilbert 90) its fibers are of order \[\abs{\dbF_{q^\ell}:\SL_1(\dbF_{q^\ell}\mid\dbF_q)}=q-1.\]

Thus the radical $\mfr{r}_\theta$ of $\beta_\theta$ is a linear subspace of $\dbF_{q^\ell}$ of order $q$ and hence one-dimensional.
%Note that in particular, if $\frac{\tau^2(\mbf{y})}{\tau^3(\mbf{y})}\in\dbF_q\cap\SL_1(\dbF_{q^\ell}\mid\dbF_q)$ then this is equation can be re-written as \[\tau^2(\mbf{x})=\lambda\mbf{x},\]
%for some $\lambda$ of order dividing $\ell$ in $\dbF_q$. If $\lambda\ne 1$ then it is an eigenvalue of $\tau^2$. Since $\lambda^j,\:j=1,\ldots,\ell$ are $\ell$-distinct eigenvalues of $\tau^2$ (with $\mbf{x}^j$ the corresponding eigenvectors), the eigenspaces of $\tau^2$ are $1$-dimensional, as wanted. 

%Otherwise, since $\tau^2$ is non-trivial and $\ell$ is prime, it generates $\Gal(\dbF_{q^\ell}\mid\dbF_q)$, and hence $\mbf{x}$ is in the fixed field of $\Gal(\dbF_{q^\ell}\mid\dbF_q)$, i.e. $\mbf{x}\in\dbF_q$, as wanted.
\end{enumerate}
\end{proof}
\section{Construction of Irreducible Characters}\label{section:generalcase}

In this section we construct the irreducible characters of $\SL_1(D)$. Our construction is an application of the method used by Krakovski, Onn and Singla in \cite{UriRoiPooja} for the study of the regular irreducible representations of $\GL_d(\oo)$ and $\GU_d(\oo)$, for any $d\in\dbN$. The case of $\SL_1(D)$ is especially well-disposed to this type of construction as all irreducible characters of $\SL_1(D)$ are regular (see~\cite{Hill} for the definition of a regular character). The prototype for this type of construction is the computation of the representation zeta function of $\SL_2(\mfr{o})$, where $\mfr{o}$ is a compact discrete valuation ring, which was completed by Jaikin-Zapirain in \cite[\S~7]{jaikin}. Note that all irreducible characters of $\SL_2(\mfr{o})$ are regular as well.

%In Subsection~\ref{subsection:Liecorres} we will will describe an explicit bijection between subquotients of $G$, which are finite nilpotent groups of nilpotency class smaller or equal to $2$, and certain finite Lie-rings. This would set up a correspondence between the set of irreducible representations of these finite groups and the Pontryagin dual of the associated Lie-ring, which we refer to as the\textit{ Lie-correspondence for subquotients of $\SL_1(D)$}. In Subsection we will describe the method of \textit{Heisenbreg Lifts}, which allows us to study the representation theory of finite nilpotent $p$-groups of nilpotency class $2$, under certain assumptions on the 'Lie-structure' of the group. Finally, in Subsection~\ref{subsect:construction} we will complete the construction of all irreducible representations of $\SL_1(D)$ and compute the dimensions of all such representations, as well as prove Theorem~\ref{theo:monomial}.In this Subsection we complete the construction of all irreducible representations of $G$. In general lines, the construction is performed as follows. 

Recall that the level of a character $\varphi$ of $G$ is the minimal number $m$ such that $\varphi$ is trivial on $G^{m+1}$. We consider a fixed irreducible character $\varphi$ of $G$ of level $m\in\dbN_0$.

Let $0\le r<m+1$ and let $\vartheta$ be an irreducible constituent of the restriction of $\varphi$ to $G^r$. Recall that the inertia subgroup of the character $\vartheta$ in $G$ is defined by 
\begin{equation}\label{equa:inertiadefi}I_G(\vartheta):=\set{g\in G\mid \vartheta={}^g\vartheta},\end{equation}
where ${}^g\vartheta(x):=\vartheta(g^{-1}xg)$ for all $x\in G^r$. Also recall that in the case where $\vartheta$ extends to a character $\hat{\vartheta}$ of $I_G(\vartheta)$, the induced character $\ind_{I_G(\vartheta)}^G(\hat{\vartheta})$ is irreducible \cite[Theorem~6.11]{Isaacs}.

In the case at hand, Lemmas~\ref{lem:exp} and \ref{lem:dualpairing} allow us to identify $\vartheta$ with an element of a suitable subquotient of the Lie-ring $\mfr{sl}_1(D)$ in a $G$-equivariant manner. In particular, the inertia subgroup of $\vartheta$ in $G$ is equal to the $m'$-th congruence stabilizer of an element of $D$, for a suitable $m'\in\dbZ$. In the case where $m$ is odd, or is divisible by $\ell$, the decomposition shown in Proposition~\ref{propo:centralizerformula} allows us to extend $\vartheta$ to a character of $I_G(\vartheta)$, whose induction is equivalent to $\varphi$. 

In the general case, we show the following theorem, which implies Theorem~\ref{theo:monomial}.

 %Recall that the set of representations of $G$ of level $m$ can be identified with the set of representations of $\GG_{m+1}$ which are non-trivial on $\GG^m_{m+1}$. In particular, $\vartheta$ can be identified with an irreducible representation of the finite abelian group $\GG^{\ceil{m/2}}_{m+1}$ and hence is $1$-dimensional. Additionally, Lemmas~\ref{lem:exp} and~\ref{lem:dualpairing} allow us to identify $\vartheta$ with an element of a 
\begin{theo}\label{theo:proveseverything}Let $\varphi\in\irr(G)$ have level $m\in\dbN_0$. Then
\begin{enumerate}
\item The degree of $\varphi$ is \[d_m^\ell(q)=\begin{cases}q^{\frac{\ell-1}{2}m}&\text{if }\ell\mid m,\\
\frac{q^\ell}{\iota\cdot(q-1)}q^{\frac{\ell-1}{2}(m-1)}&\text{otherwise}.
\end{cases}\]
\item Assuming $\ell>2$ or $\ell=2$ and $m\not\equiv_{(4)}2$, there exists an open subgroup $J\subseteq I_G(\vartheta)$ and a linear character $\hat{\vartheta}\in\irr(J)$ such that $\varphi\simeq \ind_{J}^G(\hat{\vartheta}).$
\end{enumerate}
\end{theo}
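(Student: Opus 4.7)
The plan is to analyze each irreducible character $\varphi \in \irr(G)$ of level $m$ by restricting to a congruence subgroup $G^r$, identifying an irreducible constituent of $\varphi|_{G^r}$ with a coset in the Lie ring $\mfr{g}$ via the correspondence of Section~\ref{subsection:liecorres}, and then reconstructing $\varphi$ by Clifford theory. The base case $m = 0$ is immediate, since $G/G^1 \cong \SL_1(\dbF_{q^\ell}\mid\dbF_q)$ is cyclic, all its characters are linear, and $d_0^\ell(q) = 1$. For $m \ge 1$ I would split into three subcases according to the structure of $G^r/G^{m+1}$: (i) $m$ odd, with $r = (m+1)/2$; (ii) $m$ even with $\ell \mid m/2$, with $r = m/2$; and (iii) $m$ even with $\ell \nmid m/2$, again with $r = m/2$.

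In subcases (i) and (ii), the quotient $G^r/G^{m+1}$ is abelian: in (i) because $2r = m+1$, and in (ii) because $\ell \mid r$ forces $[G^r, G^r] \subseteq G^{2r+1} = G^{m+1}$. Clifford's theorem gives $\varphi|_{G^r} = e\sum_{g \in G/I_G(\vartheta)} {}^g\vartheta$ for a linear constituent $\vartheta$, which, by (a suitable adaptation of) Corollary~\ref{corol:dualforabelian}, equals $\llog^*(\phi_y)$ for some $y \in \g^{-m}_{-r+1}$ whose lift $Y \in \mfr{g}^{-m}$ satisfies $\mc(Y) = -m$. The $G$-equivariance of $\llog^*$ identifies $I_G(\vartheta)$ with the congruence stabilizer $G \cap \St_{\OO^\times}^{-r+1}(Y)$, which by Propositions~\ref{propo:centralizerformula} and \ref{propo:centrQtoG} equals $\CC_G(Y) \cdot G^r$. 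Since $\CC_G(Y) = G \cap K(Y)$ lies in a field, it is abelian, and a standard extension argument yields a linear character $\hat\vartheta$ of $I_G(\vartheta)$ lifting $\vartheta$. Then $\varphi = \ind_{I_G(\vartheta)}^G(\hat\vartheta)$ is monomial, and the degree $[G : I_G(\vartheta)]$ evaluates by Propositions~\ref{propo:indexcent} and \ref{propo:centrQtoG} to $d_m^\ell(q)$.

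In subcase (iii), $G^r/G^{m+1} = G^r/G^{2r+1}$ is genuinely class-$2$ nilpotent, and I would invoke the Heisenberg machinery of Section~\ref{subsect:heisenberg}. Let $\theta$ be a linear constituent of $\vartheta|_{G^{r+1}}$, corresponding via Corollary~\ref{corol:dualforabelian} to an element $y \in \g^{-m}_{-r}$, and let $\sigma = \ind_{J_\theta}^{\Gamma^1}(\vartheta'')$ be the Heisenberg lift of Proposition~\ref{propo:lift}, which is the unique character of $\Gamma^1 = G^r/G^{m+1}$ above $\theta$ and is monomial by construction. For $\ell$ odd, Proposition~\ref{propo:radicals}(2) gives $\dim_{\dbF_q}\mfr{r}_\theta = 1$, so $\dim(\sigma) = [\Gamma^1 : J_\theta] = q^{(\ell-1)/2}$. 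I would then extend $\sigma$ to $I_G(\sigma)$ and induce to $G$ while preserving monomiality by using the centralizer formula of Proposition~\ref{propo:centralizerformula} to align the one-dimensional $\mfr{r}_\theta$ with an abelian direction in $I_G(\sigma)$, producing a compact-open subgroup $J \supseteq J_\theta$ together with a linear character $\hat\vartheta$ satisfying $\ind_J^G(\hat\vartheta) = \varphi$. Multiplying $\dim(\sigma)$ by $[G : I_G(\sigma)]$, computed from the orbit counts of Section~\ref{section:similarity}, yields $d_m^\ell(q) = \frac{q^\ell - 1}{\iota(q-1)} q^{(\ell-1)(m-1)/2}$.

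The main obstacle is precisely the exclusion in assertion (2): when $\ell = 2$, Proposition~\ref{propo:radicals}(1) yields $\mfr{r}_\theta = 0$, leaving no extra abelian direction along which to spread the extension of $\sigma$, so $\sigma$ cannot be expressed as induced from a linear character of any open subgroup of $G$ containing $J_\theta$. This explains the failure of monomiality for $\ell = 2$ with $m \equiv 2 \pmod 4$. The degree formula in assertion (1) nevertheless follows unconditionally, with $\dim(\sigma) = q$ in this exceptional case, via the same orbit counts of Section~\ref{section:similarity}.
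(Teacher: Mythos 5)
Your proposal follows the same case decomposition (level $m$ odd; $m$ divisible by $2\ell$; $m$ even but $\ell\nmid m/2$), the same Lie correspondence $\vartheta = \llog^*(\phi_y)$, the same centralizer formula from Proposition~\ref{propo:centralizerformula}, and the same Heisenberg-lift machinery as the paper, so the overall route is essentially identical.

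Two points of imprecision are worth flagging. First, in case (iii) for $\ell$ odd you describe $\sigma=\ind_{J_\theta}^{\Gamma^1}(\vartheta'')$ as ``the unique character of $\Gamma^1$ above $\theta$''. This is not so: by Proposition~\ref{propo:lift}(2), the characters of $\Gamma^1$ lying above the character $\vartheta\in\irr(\Gamma^2)$ corresponding to $\theta$ are in bijection with the extensions of $\theta$ to $\msf{R}_\theta$, of which there are $|\mfr{r}_\theta|=q$ when $\ell$ is odd. One must choose $\sigma$ to be a constituent of $\varphi|_{\Gamma^1}$, identify the specific extension $\theta'\in(\msf{R}_\theta)^\dual$ it corresponds to, and propagate that extension through the construction (this is precisely what the paper does by choosing $y'$ so that $\phi_{y'}|_{\msf{R}_\theta}=\theta'$). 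Uniqueness does hold in your quaternion case because there $\mfr{r}_\theta=\set{0}$. Second, your closing claim that ``$\sigma$ cannot be expressed as induced from a linear character of any open subgroup of $G$ containing $J_\theta$'' asserts a failure of monomiality that the paper neither claims nor proves: the paper simply does not establish monomiality when $\ell=2$ and $m\equiv 2\pmod 4$, and instead produces $\varphi$ as induced from a degree-$q$ extension $\hat{\Theta}$ of $\Theta$ to $I_{\GG_{m+1}}(\sigma)$, leaving the monomiality question open in that case. Relatedly, the mechanism making monomiality work for $\ell$ odd is not ``aligning $\mfr{r}_\theta$ with an abelian direction'' but the observation that $\CC_G(Y)$, being contained in a totally ramified extension, reduces into $\dbF_q^\times$ and hence acts trivially on $\mfr{f}\cong\dbF_{q^\ell}$, so it normalizes \emph{every} $J_\theta$ — exactly the step that breaks down for $\ell=2$, where $\CC_G(Y)$ reduces onto $\SL_1(\dbF_{q^2}\mid\dbF_q)$ and permutes the lines of $\mfr{f}$ transitively.
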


The proof of Theorem~\ref{theo:proveseverything} is divided into cases, according to $m$, by ascending level of complexity, given in Sections~\ref{subsect:oddlevel},~\ref{subsect:divby2ell} and~\ref{subsect:evenlevel}.

Note that, as the group $\GG_1\cong\SL_1(\dbF_{q^\ell}\mid\dbF_q)$ is abelian, Theorem~\ref{theo:proveseverything} holds trivially in the case $m=0$.

We will require the following lemma, which can be proved by a simple application of $\nu$-expansions.

 \begin{lem}\label{lem:mctraceless}Let $y\in D$ have $\Trd(y)=0$. Then $\mc(y)=\ell\cdot\val(y).$
\end{lem}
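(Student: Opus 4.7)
The plan is to reduce the statement to a direct computation with the $\nu$-expansion of $y$. Write $y=\sum_{j\ge j_0}\nu^j t_j$ with $t_j\in\TT_L$, $t_{j_0}\ne 0$, and $j_0=\ell\cdot\val(y)$. As noted just before Lemma~\ref{lem:mcproperties}, $\mc(y)$ equals the smallest index $j\ge j_0$ such that $\nu^j t_j\notin K$; in particular $\mc(y)\ge j_0$ in all cases. The goal is therefore to show that $\nu^{j_0}t_{j_0}\notin K$ under the hypothesis $\Trd(y)=0$.

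First I would dispose of the easy case $\ell\nmid j_0$. Here $\nu^{j_0}$ has non-integer valuation $j_0/\ell$, and since $t_{j_0}\in\TT_L^\times$ is a unit the product $\nu^{j_0}t_{j_0}$ also has non-integer valuation, so it cannot lie in $K$. Hence $\mc(y)=j_0=\ell\cdot\val(y)$.

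The substantive case is $\ell\mid j_0$, where $\nu^{j_0}=\pi^{j_0/\ell}\in K$ and the condition $\nu^{j_0}t_{j_0}\notin K$ reduces to $t_{j_0}\notin\TT$. I would argue by contradiction: assume $t_{j_0}\in\TT^\times$. Regroup the $\nu$-expansion into the $L$-decomposition $y=\sum_{k=0}^{\ell-1}\nu^k x_k$ using the identity $\nu^{\ell m+k}t_{\ell m+k}=\nu^k\pi^m t_{\ell m+k}$, which is valid because $\pi\in K$ is central. The $L$-component corresponding to $k=0$ is $x_0=\sum_{m\ge m_0}\pi^m t_{\ell m}$, where $m_0:=j_0/\ell$. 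By the reduced trace formula derived from~\eqref{equa:matrixrepn},
\[
\Trd(y)=\Tr_{L/K}(x_0)=\sum_{m\ge m_0}\pi^m\,\Tr_{L/K}(t_{\ell m}).
\]
For the leading index $m=m_0$, the assumption $t_{j_0}\in\TT^\times\subseteq K$ gives $\Tr_{L/K}(t_{j_0})=\ell\cdot t_{j_0}$, which is a unit in $\oo$ precisely because $p\ne\ell$. Thus the leading summand $\pi^{m_0}\ell\,t_{j_0}$ has valuation exactly $m_0$, while every subsequent summand has valuation strictly greater than $m_0$. Consequently $\Trd(y)\ne 0$, contradicting the hypothesis.

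The only obstacle here is bookkeeping: interchanging between the $\nu$-expansion and the $L$-decomposition, and making sure that the residual-characteristic assumption $p\ne\ell$ is used in exactly the right spot so that the leading contribution to $\Trd(y)$ does not vanish.
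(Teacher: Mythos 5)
Your proof is correct, and it is exactly the ``simple application of $\nu$-expansions'' that the paper gestures at without writing out. You correctly identify that the claim reduces to showing that the leading term $\nu^{j_0}t_{j_0}$ of the $\nu$-expansion does not lie in $K$: the case $\ell\nmid j_0$ is handled by a valuation count, and in the case $\ell\mid j_0$ the regrouping into the $L$-decomposition $y=\sum_{k=0}^{\ell-1}\nu^k x_k$, the identity $\Trd(y)=\Tr_{L/K}(x_0)$ from \eqref{equa:matrixrepn}, and the fact that $\Tr_{L/K}(t_{j_0})=\ell\,t_{j_0}$ is a unit when $t_{j_0}\in\TT^\times$ (this is precisely where $p\ne\ell$ enters) force $\val(\Trd(y))=j_0/\ell<\infty$, contradicting $\Trd(y)=0$. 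This matches the intended argument.
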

%\begin{proof}
%That $\mc(y)\ge \ell\val(y)$ is immediate from the definition. Suppose $\mc(y)>\ell\val(y)$, then in particular there exists $\lambda\in K$ such that $\val(y-\lambda)>val(y)$, and hence $\val(\lambda)=\val(y-(y-\lambda))=\min\set{\val(y),\val(y-\lambda)}=\val(y).$ Hence $\val(y)\in \dbZ$, and $y\equiv\lambda(\mod\pp^{\val(y)+1}).$ But then $\Trd(y)\equiv \ell\lambda(\mod\pp^{\val(y)+1})$ and in particular, is non-zero. A contradiction.
%\end{proof}

\subsection{Characters of Odd Level}\label{subsect:oddlevel}
Assume $m$ is odd, and put  $r=\frac{m+1}{2}$. By definition of the level of $\varphi$, it may be identified with a character of the finite group $\GG_{m+1}$ which is non-trivial on $\GG_{m+1}^m$. Let $\vartheta$ be an irreducible constituent of the restriction of $\varphi$ to $\GG^{r}_{m+1}$. By Corollary~\ref{corol:dualforabelian}, there exists $y=y_\vartheta\in\g^{-m}_{-r+1}$ such that $\vartheta=\llog^*(\phi_y)$. Let $Y\in\mfr{g}^{-m}$ be a lift of $y$. As the bijection of Corollary~\ref{corol:dualforabelian} is $G$-equivariant, we have that
\[I_G(\vartheta)=\stab_G(y)=\St^{-r+1}_G(Y).\]
Here $\stab_G(y)$ denotes the stabilizer of $y$ under the conjugation action of $G$. 
\iffalse
Assume $m$ is odd. %, and write $m=2k\ell+\lambda$ with $0<\lambda<2\ell$ odd. 
By definition of level, $\varphi$ may be considered a character of the finite group $\GG_{m+1}$, which is non-trivial on $\GG_{m}$. Let $\vartheta$ be an irreducible constituent of the restriction of $\varphi$ to $\GG^{r}_{m+1}$. By Lemma~\ref{lem:exp}, $\GG^{k\ell+\frac{m+1}{2}}_{m+1}$ is abelian and $\vartheta$ is of the form $\log^*(\theta)$ for some $\theta\in\left(\g^{r}_{m+1}\right)^\dual$.

Let $0<\lambda<2\ell$ be such that $m=2k\ell+\lambda$. Applying Proposition~\ref{propo:dual} (with $f=\lambda+1$) we have that $\theta=\phi_y$, for some $y\in\g^{-\lambda}_{k\ell-\frac{\lambda-1}{2}}$, where $\phi_y(x)$ as in Lemma~\ref{lem:dualpairing}. Let $Y\in \mfr{g}^{-\lambda}$ be some lift of $y$. 

As all isomorphisms described above are $G$-equivariant, and by the definition of the congruence stabilizer, we have that
\[I_G(\vartheta)=\stab_G(\theta)=\CC_G(y)=\St^{k\ell-\frac{\lambda-1}{2}}_G(Y).\]\fi
Furthermore, by Lemma~\ref{lem:mctraceless} and since $\phi_y$ does not trivialize on $\g_{m+1}^{m}$, we have that $\mc(Y)=\ell\cdot\val(Y)=-m$. By Proposition~\ref{propo:centralizerformula}, we deduce that \[I_G(\vartheta)=\CC_G(Y)\cdot G^{r}.\]

Reducing modulo $G^{m+1}$, we describe our situation in Figure~\ref{figure:oddlevel}.    
\begin{figure}\centering
\begin{tikzpicture}[description/.style={fill=white,inner sep=2pt}]
\matrix (m) [matrix of math nodes, row sep=1em,
column sep=1.5em, text height=1.5ex, text depth=0.25ex]
{&I_{\GG_{m+1}}(\vartheta)\\
\msf{C}_{m+1}(Y)&&\GG^{r}_{m+1}\\
& \msf{C}_{m+1}(Y)\cap \GG^{r}_{m+1}& \\~\\
&\set{1}\\ };
%\draw[double,double distance=5pt] (m-1-1) – (m-1-5);
\path[-,font=\scriptsize]
(m-1-2) edge[thin] node[auto] {} (m-2-1)
	    edge[thin] node[auto] {} (m-2-3)
%(m-2-4) edge[double equal sign distance, thick] node[right] {} (m-2-3)
(m-3-2) edge[thin] node[auto] {} (m-5-2)
		edge[thin] node[auto] {} (m-2-1);
\path[-,shorten >=.4cm,font=\scriptsize]
(m-2-3)	edge[thin] node[auto] {} (m-3-2)
;
\end{tikzpicture}\caption{Diagram for odd level}\label{figure:oddlevel}\end{figure}
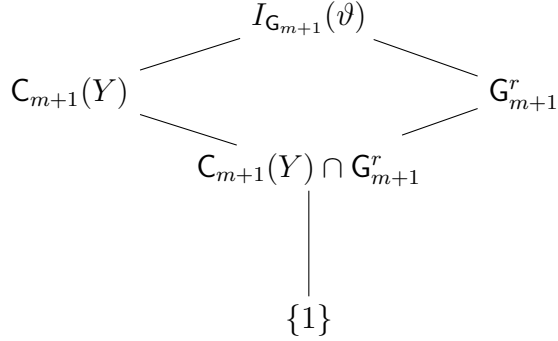
Here $\msf{C}_{m+1}(Y)$ denotes the reduction of $\CC_G(Y)$ modulo $G^{m+1}$. 

As the group $\msf{C}_{m+1}(Y)$ is abelian, we can select a linear character $\chi$ of $\msf{C}_{m+1}(Y)$, lying under $\varphi$ and extending the restriction of $\vartheta$ to $\msf{C}_{m+1}(Y)\cap \GG_{m+1}^{r}$. In this situation, the characters $\chi$ and $\vartheta$ glue to a linear character $\hat{\vartheta}$ of $I_{\GG_{m+1}}(\vartheta)$, extending $\vartheta$ to $I_{\GG_{m+1}}(\vartheta)$ and lying under $\varphi$.

By \cite[Theorem 6.11]{Isaacs}, the induced character $\ind_{I_G(\vartheta)}^G(\hat{\vartheta})$ is irreducible and equal to $\varphi$. Furthermore, by a simple computation based on Propositions~\ref{propo:indexcent} and~\ref{propo:centrQtoG}, we have that 
\begin{align*}\varphi(1)=&\abs{\GG_{m+1}:I_{\GG_{m+1}}(\vartheta)}=\abs{G:\St^{-r+1}_G(Y)}\\=&\begin{cases}q^{\frac{\ell-1}{2}\cdot m}&\text{if }\ell\mid m,\\
\frac{q^\ell-1}{\iota\cdot(q-1)}\cdot q^{\frac{\ell-1}{2}\cdot (m-1)}&\text{if } \ell\nmid m.
\end{cases}\end{align*}
Thus, Theorem~\ref{theo:proveseverything} is true for characters of odd level.

\subsection{Characters of Level Divisible by $2\ell$} \label{subsect:divby2ell}
Assume now that $m$ is divisible by $2\ell$ and put $r=\frac{m}{2}$. This case is somewhat similar to the case of odd level, as the fact that $\GG_{m+1}^r$ is abelian group remains true in this case (see~Section~\ref{section:chatcorres}). A slight modification is required to account for the fact that Corollary~\ref{corol:dualforabelian} does not hold for the group $\GG^{\frac{m}{2}}_{2m+1}$.

Again, $\varphi$ may be regarded as a character of $\GG_{m+1}$, which is non-trivial on $\GG^m_{m+1}$. Let $\vartheta'$ be an irreducible constituent of the restriction of $\varphi$ to $\GG^{r}_{m+1}$. Then $\vartheta'$ is a linear character.
%However, a simple argument as in the previous case is unavailable, as Lemma~\ref{lem:exp} does not imply that $\GG^{k\ell}_{2k\ell+1}$ is isomorphic $\g^{2k\ell}_{2k\ell+1}$ in this case. Nonetheless, this case is not much more complicated.

Let $\vartheta$ be the restriction of $\vartheta'$ to ${\GG^{r+1}_{m+1}}$. Corollary~\ref{corol:dualforabelian} is applicable to $\GG^{r+1}_{m+1}$ and hence

\begin{propo}\label{propo:dual} Let $r,m\in\dbN$ be given with $r\le m$. Then, the Pontryagin dual of $\g^r_m$ is isomorphic to the group $\g^{-m+1}_{-r+1}$ via a $G$-equivariant isomorphism, which intertwines the coadjoint action of $G$ on $\left(\g^r_m\right)^\dual$ and the adjoint action on $\g^{-m+1}_{-r+1}.$
\end{propo}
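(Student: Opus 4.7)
The plan is to construct the isomorphism explicitly via the bilinear pairing $\gen{\cdot,\cdot}^r_m$ supplied by Lemma~\ref{lem:dualpairing}. For each $y \in \g^{-m+1}_{-r+1}$, the character $\phi_y \in (\g^r_m)^\dual$ from Definition~\ref{defi:dualitymap}, defined by $\phi_y(x) = \gen{x,y}^r_m$, gives the natural candidate. Bilinearity in $y$ makes $y \mapsto \phi_y$ a group homomorphism, and the non-degeneracy of the pairing (already built into Lemma~\ref{lem:dualpairing}) forces injectivity: if $\phi_y$ is trivial, the non-degeneracy statement produces a contradiction unless $y=0$.

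Next, I would upgrade injectivity to a bijection by a cardinality comparison. Both $\g^r_m$ and its Pontryagin dual $(\g^r_m)^\dual$ are finite abelian groups of equal order, so it suffices to verify that $|\g^r_m| = |\g^{-m+1}_{-r+1}|$. The first step is the general formula
\[
|\g^{r'}_{m'}| \;=\; q^{\ell(m'-r') - (\lceil m'/\ell \rceil - \lceil r'/\ell \rceil)}
\]
valid for all integers $r' \le m'$, which I would deduce from the short exact sequence
\[
0 \too \mfr{g}^{r'}/\mfr{g}^{m'} \too \PP^{r'}/\PP^{m'} \xrightarrow{\ \Trd\ } \pp^{\lceil r'/\ell\rceil}/\pp^{\lceil m'/\ell\rceil} \too 0,
\]
whose exactness is a direct reading of the matrix description~\eqref{equa:matrixrepn}. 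Applying this formula to the two index pairs $(r,m)$ and $(-m+1,-r+1)$, and invoking the arithmetic identity $\lceil (-k+1)/\ell\rceil = -\lceil k/\ell\rceil + 1$ (valid for all $k\in\dbZ$), shows the two cardinalities coincide.

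The $G$-equivariance and intertwining assertion are automatic once the bijection is in hand, since they are inherited from the $G$-invariance of $\gen{\cdot,\cdot}^r_m$ asserted in Lemma~\ref{lem:dualpairing}. Indeed, for any $g \in G$ this invariance translates into $\gen{gxg^{-1}, y}^r_m = \gen{x, g^{-1}yg}^r_m$, so the coadjoint action of $g$ on $\phi_y$ produces precisely $\phi_{gyg^{-1}}$. Hence $y \mapsto \phi_y$ carries the adjoint action on $\g^{-m+1}_{-r+1}$ to the coadjoint action on $(\g^r_m)^\dual$.

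The only technical obstacle I anticipate is the surjectivity of the truncated reduced trace $\PP^{r'}/\PP^{m'} \twoheadrightarrow \pp^{\lceil r'/\ell\rceil}/\pp^{\lceil m'/\ell\rceil}$ in the exact sequence above, which underpins the cardinality count; but this is a routine consequence of the cyclic algebra decomposition $D = \bigoplus_{j=0}^{\ell-1} \nu^j L$ and the explicit formula $\Trd(x) = \Tr_{L/K}(x_0)$ readable off~\eqref{equa:matrixrepn}.
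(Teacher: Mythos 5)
Your proposal is correct and follows essentially the same route as the paper: both establish $y\mapsto\phi_y$ as a $G$-equivariant injection via the pairing of Lemma~\ref{lem:dualpairing}, then upgrade to a bijection by the cardinality count $\abs{\g^r_m}=q^{\ell(m-r)-(\ceil{m/\ell}-\ceil{r/\ell})}$ together with the ceiling identity $\ceil{(-k+1)/\ell}=-\ceil{k/\ell}+1$. The only difference is that you spell out the short exact sequence $0\to\mfr{g}^{r'}/\mfr{g}^{m'}\to\PP^{r'}/\PP^{m'}\xrightarrow{\Trd}\pp^{\ceil{r'/\ell}}/\pp^{\ceil{m'/\ell}}\to 0$ underlying the cardinality formula, whereas the paper simply asserts it — a welcome bit of extra detail, not a divergence.
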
 $\vartheta=\llog^*(\phi_y)$ for some $y\in \g^{-m}_{-r}$. Picking a lift $Y\in \mfr{g}^{-m}$ of $y$, as in the previous case, we have that
%By Lemma~\ref{lem:exp}, $\vartheta=\log^*(\theta)$ for some $\theta\in\left(\g^{k\ell+1}_{2k\ell+1}\right)^\dual$, and by Proposition~\ref{propo:dual} (applied with $f=1$) we have that $\theta=\phi_y$, with $y\in\g_{k\ell}$. Pick $Y\in\mfr{g}$ to be a lift of $y$. Then, as in the previous case, we have that 
\[I_G(\vartheta)=\stab_G(y)=\St^{-r}_G(Y).\]

Furthermore, by Lemma~\ref{lem:mctraceless} and the fact that $\phi_y$ does not trivialize on $\g^{m}_{m+1}$, we have that $\mc(Y)=-m$. Thus, by Proposition~\ref{propo:centralizerformula}, we have that
\[I_G(\vartheta)=\CC_G(Y)\cdot G^{r}.\] This situation is summarized in Figure~\ref{figure:level2kl}.

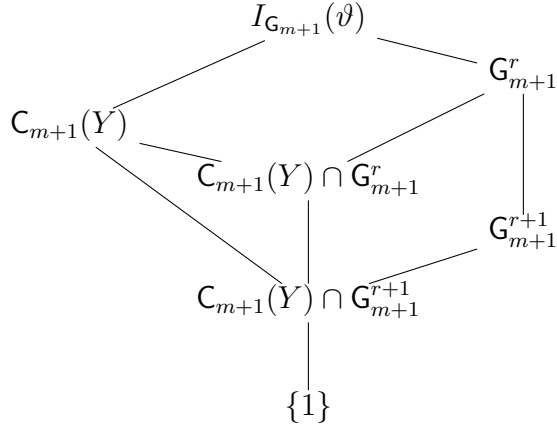
\begin{figure}\centering
\begin{tikzpicture}[description/.style={fill=white,inner sep=2pt}]
\matrix (m) [matrix of math nodes, row sep=.5em,
column sep=1.5em, text height=1ex, text depth=0.25ex]
{&I_{\GG_{m+1}}(\vartheta)\\
&&\GG^{r}_{m+1}\\
\msf{C}_{m+1}(Y) & &  \\
& \msf{C}_{m+1}(Y)\cap \GG^{r}_{m+1}& 
\\&&\GG_{m+1}^{r+1}\\\\
&\msf{C}_{m+1}(Y)\cap \GG_{m+1}^{r+1}& \\~\\
&\set{1}\\ };
%\draw[double,double distance=5pt] (m-1-1) – (m-1-5);
\path[-,font=\scriptsize]
(m-1-2) edge[thin] node[auto] {} (m-3-1)
(m-1-2) edge[thin] node[auto] {} (m-2-3)
(m-3-1) edge[thin] node[auto] {} (m-4-2)
(m-2-3) edge[thin] node[auto] {} (m-4-2)
(m-3-1) edge[thin] node[auto] {} (m-7-2)
(m-5-3) edge[thin] node[auto] {} (m-7-2)
(m-4-2) edge[thin] node[auto] {} (m-7-2)
(m-2-3) edge[thin] node[auto] {} (m-5-3)
(m-9-2) edge[thin] node[auto] {} (m-7-2);
\end{tikzpicture}\caption{Diagram for  $2k\ell$}\label{figure:level2kl}\end{figure}

As $\msf{C}_{m+1}(Y)$ is abelian, we may pick a linear character $\chi$ of $\msf{C}_{m+1}(Y)$ which lies under $\varphi$ and extends the restriction of $\vartheta'$ to $\msf{C}_{m+1}(Y)\cap \GG_{m+1}^{r}$. Furthermore, we have that $\vartheta'$ extends $\vartheta$ to $\GG_{m+1}^{r}$. Thus, the characters $\chi$ and $\vartheta'$ glue to a linear character $\hat{\vartheta}$ of $I_{\GG_{m+1}}(\vartheta)$, which extends $\vartheta$ and lies under $\varphi$. As in the previous case, the induced character $\ind_{I_G(\vartheta)}^G(\hat{\vartheta})$ is irreducible and equal to $\varphi$. Additionally, by Propositions~\ref{propo:indexcent} and \ref{propo:centrQtoG}
\[\varphi(1)=\abs{G:I_G(\vartheta)}=\abs{G:\St^{-r}_G(Y)}=q^{\frac{\ell-1}{2}\cdot m}=d^\ell_{m}(q),\]
as wanted.

\subsection{Characters of Even Level}\label{subsect:evenlevel}

Lastly, we consider the case where the level of $\varphi$ is an even number not divisible by $2\ell$. To complete the construction and prove Theorem~\ref{theo:proveseverything} in this case, we will use the tools described in Section~\ref{subsect:heisenberg}. As seen in Proposition~\ref{propo:radicals}, the case where $\ell=2$ is somewhat different from the case where $\ell$ is odd, and is treated separately.

Let us recall the notation of Section~\ref{subsect:heisenberg}. We assume that $\varphi$ has even level $m=2r$ and that $r$ is not divisible by $\ell$. We write
\[\Gamma^i=\GG^{r+(i-1)}_{2r+1}\quad\text\quad
\ggama_i=\g^{r+(i-1)}_{2r+1},\quad(i=1,2),\]
and $\mfr{f}=\Gamma^1/\Gamma^2\cong\dbF_{q^\ell}$.

%We chose an irreducible constituent $\vartheta$ of the restriction of $\varphi$ to $\Gamma^2$. Then $\vartheta=\llog^*(\theta)$ for some $\theta\in(\ggama^2)^\dual$, which is of the form $\phi_y$ for some $y\in\g^{-2r}_{-r+1}$. We fix $Y\in\mfr{g}^{-2r}$ a lift of $y$. We have that $\mc(Y)=\ell\val(Y)=-2r$.

\subsubsection*{Division Algebras of Odd Degree}
Let $\sigma\in\irr(\Gamma^1)$ be a constituent of the restriction of $\varphi$ to $\Gamma^1$ and let $\vartheta\in \irr(\Gamma^2)$ lie under $\sigma$. Put $\theta=\eexp^*(\vartheta)\in(\ggama_2)^\dual$. By Proposition~\ref{propo:lift}(2), the character $\sigma$ corresponds to a unique extension $\theta'\in (\msf{R}_\theta)^\dual$ lying above $\theta$. By extending $\theta'$ to $\ggama_1$ and applying Proposition~\ref{propo:dual}, there exists an element $y'\in\g^{-2r}_{-r+1}$ such that $\theta'$ coincides with the restriction of $\phi_{y'}$ to $\msf{R}_\theta$. 

Let $Y\in\mfr{g}^{-2r}$ be a lift of $y'$ and put $y=Y+\mfr{g}^{-r}\in\g^{-2r}_{-r}$. One checks from the definition of $\phi_y$ that $\phi_y=\theta$. As before, we have that \[I_G(\vartheta)=\St^{-r}_G(Y)=\CC_G(Y)\cdot G^{r}.\]

Moreover, the bijection of Proposition~\ref{propo:lift}(1) implies that
\[I_G(\vartheta')\subseteq I_G(\vartheta)=\CC_G(Y)\cdot G^r\subseteq I_G(\sigma)=I_G(\vartheta'),\]
where $\vartheta'=\llog^*(\theta')\in\irr(R_\theta)$.

The main components of the construction appear in Figure~\ref{figure:2ml+evenlambda}. The groups $\mfr{j}_\theta,\:\msf{J}_\theta, J_\theta$ will arise by picking an isotropic subspace of $\mfr{f}_\theta=\mfr{f}/\mfr{r}_\theta$. The group $\msf{C}_{m+1}(Y)$ denotes the image of $\CC_G(Y)$ in $\GG_{m+1}$. %Note that by Lemma~\ref{lem:1dimrad} the group $\mfr{r}_\theta=\msf{R}_\theta/\Gamma^2$ is either trivial (if $\ell=2$) or of order $q$ (if $\ell>2$).
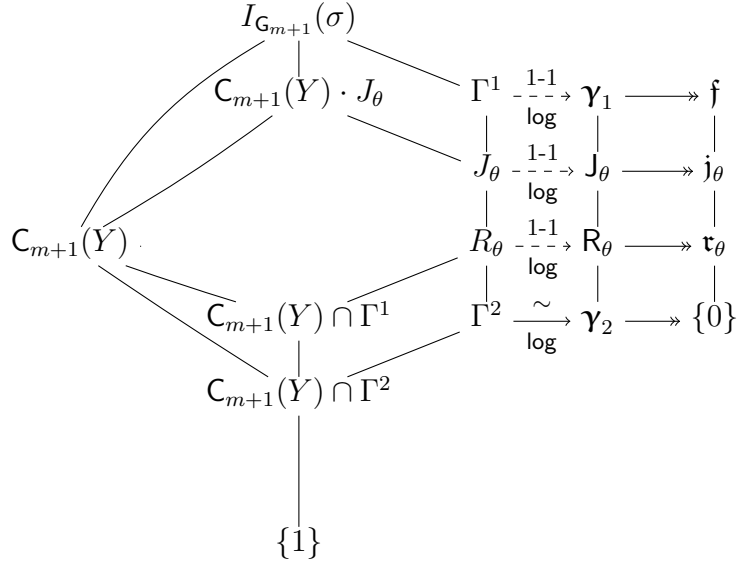
\begin{figure}[h!]
\begin{tikzpicture}[description/.style={fill=white,inner sep=2pt}]
\matrix (m) [matrix of math nodes, row sep=1.2em,
column sep=1.8em, text height=1ex, text depth=0.25ex]
{%&\GG_{m+1}&&\g_{m+1}\\~\\
&I_{\GG_{m+1}}(\sigma)\\
&\msf{C}_{m+1}(Y)\cdot J_\theta&\Gamma^1&\ggama_1&\mfr{f}\\
&&J_\theta&\msf{J}_\theta&\mfr{j}_\theta\\
\msf{C}_{m+1}(Y)&&R_\theta&\msf{R}_\theta&\mfr{r}_\theta\\
&\msf{C}_{m+1}(Y)\cap \Gamma^1&\Gamma^2&\ggama_2&\set{0}\\
&\msf{C}_{m+1}(Y)\cap \Gamma^2\\~\\
&\set{1}&&&\\}; 

\path[-,font=\scriptsize]

%(m-1-2) edge[thin] node[auto] {} (m-3-2)
%(m-1-4) edge[thin] node[auto] {} (m-4-4)
(m-1-2) edge[thin] node[auto] {} (m-2-2)
		edge[bend right=15, thin] node[auto] {} (m-4-1)
		edge[thin] node[auto] {} (m-2-3)
(m-3-3) edge[thin] node[auto] {} (m-2-3)
		edge[thin] node[auto] {} (m-2-2)
		edge[thin] node[auto] {} (m-4-3)
(m-4-1) edge[bend right=3, thin] node[auto] {} (m-2-2)
		edge[thin] node[auto] {} (m-4-1)
		edge[bend right=1,thin] node[auto] {} (m-6-2)
(m-4-1)	edge[bend right=1,thin] node[auto] {} (m-5-2)
%(m-7-3) edge[thin] node[auto] {} (m-8-2)
(m-4-3) edge[thin] node[auto] {} (m-5-2)
(m-5-3) edge[thin] node[auto] {} (m-4-3)
(m-6-2) edge[thin] node[auto] {} (m-5-3)
		edge[thin] node[auto] {} (m-5-2)
		edge[thin] node[auto] {} (m-8-2)
		
(m-3-4) edge[thin] node[auto] {} (m-2-4)
		edge[thin] node[auto] {} (m-4-4)
(m-5-4) edge[thin] node[auto] {} (m-4-4)	

(m-3-5) edge[thin] node[auto] {} (m-2-5)
		edge[thin] node[auto] {} (m-4-5)
(m-5-5) edge[thin] node[auto] {} (m-4-5)		
;

\path[->,font=\scriptsize]
(m-2-3) edge[dashed] node[below] {$\llog$} node[above] {1-1} (m-2-4)
(m-3-3) edge[dashed] node[below] {$\llog$} node[above] {1-1} (m-3-4)
(m-4-3)	edge[dashed] node[below] {$\llog$} node[above] {1-1} (m-4-4)
(m-5-3) edge[thin] node[below] {$\llog$} node[above] {$\sim$} (m-5-4)	
;
\path[->>,font=\scriptsize]
(m-2-4) edge[thin] node[auto] {} (m-2-5)
(m-3-4) edge[thin] node[auto] {} (m-3-5)
(m-4-4) edge[thin] node[auto] {} (m-4-5)
(m-5-4) edge[thin] node[auto] {} (m-5-5)
;
\end{tikzpicture}\caption{Diagram for even level, coprime with $2\ell$, $\ell>2$}\label{figure:2ml+evenlambda}\end{figure}

%Let us analyse the action of the group $\CC_G(Y)$ on $\mfr{f}$. Let $g\in {\OO^\times}$ be arbitrary, and let $X=\pi^k X'\nu^\eta\in \mfr{g}^{k\ell+\eta}$ be given, with $X'\in\OO$. Let $\mbf{g}\in\dbF_{q^\ell}^\times$ and $\mbf{x}'\in\dbF_{q^\ell}$ denote the image of $g$ and $X'$ modulo $\PP$. Then \[g\cdot X\cdot g^{-1}=\pi^k \cdot (g X'\nu^{\eta}g^{-\eta}\nu^{-\eta})\cdot \nu^{\eta}.\]
%Taking $\tau\in\Gal(\dbF_{q^\ell}\mid\dbF_q)$ be as above, we get that the action of ${\OO^\times}$ on $\mfr{f}$ is given by
%\begin{equation}\label{equa:conjCGYonf}\mbf{g}.\mbf{x}':=\mbf{g}\cdot \tau^{-1}(\mbf{g}^{-1})\cdot \mbf{x}.\end{equation}

%In particular, it follows from \eqref{equa:conjCGYonf}, that in the case where $\ell>2$ the action of $\CC_G(Y)$ on $\mfr{%f}$ is trivial. Indeed, as $\ell>2$ and $\val(Y)=\frac{-2\eta}{\ell}\notin\dbZ$, the extension  $K(y)/K$ is totally ramified, and hence any element of $\CC_G(Y)$ is congruent modulo $\PP$ to an element of $K$. In particular, any choice of $\mfr{j}_\theta\subseteq\mfr{f}$ such that $\mfr{j}_\theta/\mfr{r}_\theta$ is maximal isotropic is fixed by $\CC_G(Y).$

Note that the assumption that $\ell\nmid r$ implies that ${\val(Y)=-\frac{2r}{\ell}\notin \dbZ}$ and in particular that $Y$ is ramified. Consequently, the group $\CC_G(Y)$ is the intersection of $G$ with the ring of integers of a totally ramified extension $K(Y)/K$ and hence the image of $\CC_Y(G)$ modulo $1+\PP$ is contained in $\dbF_q^\times$. In particular, it follows that the conjugation action of $\CC_G(Y)$ on $\Gamma^1$ reduces to the trivial action of $\CC_G(Y)$ on $\mfr{f}$.

Let $\mfr{j}_\theta\subseteq \mfr{f}$ be such that $\mfr{j}_\theta/\mfr{r}_\theta$ is a maximal isotropic subspace. Let $J_\theta$ be the preimage of $\mfr{j}_\theta$ under the projection $\Gamma^1\to	\mfr{f}$. Since $\CC_G(Y)$ fixes $\mfr{j}_\theta$, it normalizes $J_\theta$. Let $\vartheta''$ be an extension of $\vartheta'$ to $J_\theta$. As $\msf{C}_{m+1}(Y)$ is abelian and normalizes $J_\theta$, by choosing a constituent $\chi\in\irr(\msf{C}_{m+1}(Y))$ lying under $\varphi$ as in the previous cases, the characters $\chi$ and $\vartheta''$ glue to a linear character $\hat{\vartheta}$ of $\msf{C}_{m+1}(Y)J_\theta$. Put $\Theta:=\ind_{\msf{C}_{m+1}(Y)J_\theta}^{I_{\GG_{m+1}(\sigma)}}(\hat{\vartheta})$.
\begin{samepage}
\begin{propo}\label{propo:Theta}\begin{enumerate}
\item The character $\Theta$ is irreducible and of degree $q^{\frac{\ell-1}{2}}$.
\item The inertia subgroup of $\Theta$ in $\GG_{m+1}$, defined as in \eqref{equa:inertiadefi}, equals $I_{\GG_{m+1}}(\sigma)$.
\end{enumerate}
\end{propo}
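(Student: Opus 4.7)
The strategy is to reduce both assertions to the single identity $\Theta\big\vert_{\Gamma^1}=\sigma$. Once this is established, irreducibility of $\Theta$ is automatic (its restriction being irreducible), the degree equals $\deg\sigma$, and part~(2) follows from Clifford theory applied to the normal subgroup $\Gamma^1\triangleleft\GG_{m+1}$. To prove the identity, I will apply Mackey's restriction formula to
\[
\Theta=\ind_{\msf{C}_{m+1}(Y)J_\theta}^{\msf{C}_{m+1}(Y)\Gamma^1}(\hat\vartheta).
\]
Because $\msf{C}_{m+1}(Y)\subseteq\msf{C}_{m+1}(Y)J_\theta$ and $J_\theta\subseteq\Gamma^1$, the product $\Gamma^1\cdot\msf{C}_{m+1}(Y)J_\theta$ already exhausts $\msf{C}_{m+1}(Y)\Gamma^1$, so there is a single double coset and Mackey collapses to
\[
\Theta\big\vert_{\Gamma^1}=\ind_{\Gamma^1\cap\msf{C}_{m+1}(Y)J_\theta}^{\Gamma^1}\bigl(\hat\vartheta\big\vert_{\Gamma^1\cap\msf{C}_{m+1}(Y)J_\theta}\bigr).
\]

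The argument then hinges on the key inclusion $\msf{C}_{m+1}(Y)\cap\Gamma^1\subseteq J_\theta$. Granting this, for any $ch\in\Gamma^1$ with $c\in\msf{C}_{m+1}(Y)$ and $h\in J_\theta$ one has $c=(ch)h^{-1}\in\msf{C}_{m+1}(Y)\cap\Gamma^1\subseteq J_\theta$, hence $ch\in J_\theta$; combined with the obvious reverse inclusion this gives $\Gamma^1\cap\msf{C}_{m+1}(Y)J_\theta=J_\theta$. Since $\hat\vartheta$ restricts to $\vartheta''$ on $J_\theta$ by construction, one obtains $\Theta\big\vert_{\Gamma^1}=\ind_{J_\theta}^{\Gamma^1}(\vartheta'')=\sigma$, irreducible by Proposition~\ref{propo:lift}.

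I expect the inclusion $\msf{C}_{m+1}(Y)\cap\Gamma^1\subseteq J_\theta$ to be the main obstacle, since this is where the arithmetic of the centraliser of $Y$ interacts with the symplectic structure of $\beta_\theta$. My plan for it is to show that the image of $\msf{C}_{m+1}(Y)\cap\Gamma^1$ under $\Gamma^1\twoheadrightarrow\mfr{f}$ lies inside $\mfr{r}_\theta$. Given a lift $c\in\CC_G(Y)\cap G^r$ and any $g\in\Gamma^1$, the explicit commutator formula~\eqref{equation:BetaExplicit} together with cyclicity of the reduced trace rewrites $B_\theta(c,g)$ as $\Psi\bigl(\pi^{-1}\Trd(\llog(g)\,[Y,\llog(c)])\bigr)$, which vanishes because (a suitable lift of) $\llog(c)$ commutes with $Y$. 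Hence the image sits in $\mfr{r}_\theta$; a dimension count, comparing the one-dimensional image coming from the total ramification of $K(Y)/K$ with the one-dimensional radical provided by Proposition~\ref{propo:radicals}(2), shows equality, so the image lies in $\mfr{j}_\theta\supseteq\mfr{r}_\theta$ as required.

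With $\Theta\big\vert_{\Gamma^1}=\sigma$ in hand, part~(1) is immediate: $\Theta$ inherits irreducibility from $\sigma$, and $\deg\Theta=\deg\sigma=[\mfr{f}:\mfr{j}_\theta]=q^{(\ell-1)/2}$ using $\dim_{\dbF_q}\mfr{j}_\theta=(\ell+1)/2$. For part~(2), any $g\in\GG_{m+1}$ stabilising $\Theta$ must normalise $I_{\GG_{m+1}}(\sigma)$ (the domain of $\Theta$) and, since $\Gamma^1\triangleleft\GG_{m+1}$, act on $\Gamma^1$; applying ${}^g(\cdot)$ to $\Theta\big\vert_{\Gamma^1}=\sigma$ then forces ${}^g\sigma=\sigma$, placing $g$ in $I_{\GG_{m+1}}(\sigma)$. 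The reverse inclusion is clear, since inner automorphisms of $I_{\GG_{m+1}}(\sigma)$ preserve all its characters.
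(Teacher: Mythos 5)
Your proof is correct and arrives at the same pivot as the paper --- the identity $\Theta\vert_{\Gamma^1}=\sigma$ --- but gets there by a genuinely different route. The paper first computes $\Theta(1)=\abs{\Gamma^1:J_\theta}=q^{(\ell-1)/2}$ by an index count (using the same inclusion $\msf{C}_{m+1}(Y)\cap\Gamma^1\subseteq R_\theta$ you establish), then applies Frobenius reciprocity twice to show that $\sigma=\ind_{J_\theta}^{\Gamma^1}(\vartheta'')$ occurs in $\res_{\Gamma^1}(\Theta)$, and finally invokes equality of degrees to force $\res_{\Gamma^1}(\Theta)=\sigma$. You instead apply Mackey's restriction formula to $\Theta=\ind_{\msf{C}_{m+1}(Y)J_\theta}^{\msf{C}_{m+1}(Y)\Gamma^1}(\hat\vartheta)$, observe that normality of $\Gamma^1$ together with $J_\theta\subseteq\Gamma^1$ gives a single $(\Gamma^1,\,\msf{C}_{m+1}(Y)J_\theta)$-double coset, and then use $\Gamma^1\cap\msf{C}_{m+1}(Y)J_\theta=J_\theta$ to collapse the formula to $\res_{\Gamma^1}(\Theta)=\ind_{J_\theta}^{\Gamma^1}(\vartheta'')=\sigma$. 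This is cleaner: Mackey delivers the restriction in one step, whereas the paper needs the degree count, two Frobenius applications, and the observation that an irreducible constituent of matching degree must exhaust the restriction. The cost is that Mackey needs the precise intersection $\Gamma^1\cap\msf{C}_{m+1}(Y)J_\theta=J_\theta$, not merely a non-vanishing inner product, but your reduction to $\msf{C}_{m+1}(Y)\cap\Gamma^1\subseteq J_\theta$ handles this. Parts (1) and (2) then follow exactly as in the paper. One small inefficiency: the dimension count at the end of your inclusion argument (matching the one-dimensional image of $\msf{C}_{m+1}(Y)\cap\Gamma^1$ against the one-dimensional radical of Proposition~\ref{propo:radicals}(2)) is unnecessary --- once you know the image lands in $\mfr{r}_\theta$, it is automatically in $\mfr{j}_\theta\supseteq\mfr{r}_\theta$, and that is all you need.
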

\end{samepage}
\begin{proof}
From the explicit definition \eqref{equation:BetaExplicit} of $B_\theta$, we have that ${{\msf{C}_{m+1}(Y)\cap \Gamma^1} \subseteq R_\theta}$. It follows that $(\msf{C}_{m+1}(Y)\cdot J_\theta)\cap \Gamma^1=J_\theta$, as \[J_\theta\subseteq (\msf{C}_{m+1}(Y)\cdot J_\theta)\cap \Gamma^1\subseteq J_\theta\cdot(\msf{C}_{m+1}(Y)\cap\Gamma^1)\subseteq J_\theta,\]
Hence \[\Theta(1)=\abs{I_{\GG_{m+1}}(\sigma):\msf{C}_{m+1}(Y)J_\theta}=\abs{\Gamma^1:J_\theta}=q^{\frac{\ell-1}{2}},\] and the claim regarding the degree of $\Theta$ is proved.

%By Frobenius Reciprocity, there exists a non-zero map of representations of $\msf{C}_{m+1}(Y)\cdot J_\theta$ from $\hat{\vartheta}$ into $\res^{I_{\GG_{m+1}}(\sigma)}_{\msf{C}_{m+1}J_{\theta}}(\Theta)$. Restricting this map to a map of representations of $J_\theta$, we obtain an embedding of $\vartheta''$ as a sub-representation of $\res^{I_{\GG_{m+1}(\sigma)}}_{J_\theta}(\Theta)$.
By Frobenius Reciprocity, we have that  \[\left[\hat{\vartheta},\res^{I_{\GG_{m+1}}(\sigma)}_{\msf{C}_{m+1}J_{\theta}}(\Theta)\right]_{\msf{C}_{m+1}(Y)\cdot J_\theta}=\left[\Theta,\Theta\right]_{I_{\GG_{m+1}}(\sigma)}\ne 0.\] 
Restricting the inner-product on the left-hand side to the group $J_\theta$, one easily deduces that $\left[\vartheta'',\res^{I_{\GG_{m+1}}(\sigma)}_{J_{\theta}}(\Theta)\right]_{J_\theta}\ne 0.$
%Restricting this map to a map of representations of $J_\theta$, we obtain an embedding of $\vartheta''$ as a sub-representation of $\res^{I_{\GG_{m+1}(\sigma)}}_{J_\theta}(\Theta)$.
%Furthermore, note that $\vartheta''$ naturally embeds into $\res^{I_{\GG_{m+1}(\sigma)}}_{J_\theta}(\Theta)$, by restricting any map $\hom_{\msf{C}_{m+1}(Y)J_\theta}\left(\hat{\vartheta},\res^{I_{\GG_{,+1}(\sigma)}}_{\msf{C}_{m+1}(Y)J_\theta}(\Theta)\right)$ to a map of representations of $J_\theta$. Thus, 
Again, by Frobenius Reciprocity, we have that 
\[\left[\ind_{J^\theta}^{\Gamma^1}(\vartheta''),\res^{I_{\GG_{m+1}(\sigma)}}_{\Gamma^1}(\Theta)\right]_{\Gamma^1}=\left[\vartheta'',\res^{I_{\GG_{m+1}(\sigma)}}_{J_\theta}(\Theta)\right]_{J_\theta}\ne 0.\]

%\[\hom_{\Gamma^1}\left(\ind_{J^\theta}^{\Gamma^1}(\vartheta''),\res^{I_{\GG_{m+1}(\sigma)}}_{\Gamma^1}(\Theta)\right)\cong \hom_{J_\theta}\left(\vartheta'',\res^{I_{\GG_{m+1}(\sigma)}}_{J_\theta}(\Theta)\right)\ne 0.\]
Since $\ind_{J^\theta}^{\Gamma^1}(\vartheta'')=\sigma$ is irreducible (Proposition~\ref{propo:lift}(1)), and of the same degree as $\Theta$, we deduce that the value on the left-hand side of the above equation is $1$ and hence $\res^{I_{\GG_{m+1}}(\sigma)}_{\Gamma^1}(\Theta)=\sigma.$ It follows that $\Theta$ is irreducible, for if not, its restriction to $\Gamma^1$ would have been reducible as well. Assertion (1) of the proposition is now proved.

To prove (2), note that is $\Theta^g$ and $\Theta$ are equal, for some $g\in \GG_{m+1}$, then in particular $g$ stabilizes the restriction of $\Theta$ to $\Gamma^1$. But this restriction is equal to $\sigma$, whence $g\in I_G(\sigma)$. The converse inclusion is clear.
\end{proof}
\begin{corol}The induced character $\ind_{\msf{C}_{m+1}(Y)J_\theta}^{\GG_{m+1}}(\hat{\vartheta})$ is irreducible and its pullback to $G$ is isomorphic to $\varphi$.\end{corol}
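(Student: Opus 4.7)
The plan is to combine induction in stages with Clifford's theorem, first to establish irreducibility and then to identify the induced character with $\varphi$.

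The starting observation is that $\Theta$ is defined as $\ind_{\msf{C}_{m+1}(Y) J_\theta}^{I_{\GG_{m+1}}(\sigma)}(\hat\vartheta)$, so induction in stages immediately gives
\[\ind_{\msf{C}_{m+1}(Y) J_\theta}^{\GG_{m+1}}(\hat\vartheta) \;=\; \ind_{I_{\GG_{m+1}}(\sigma)}^{\GG_{m+1}}(\Theta).\]
Since Proposition~\ref{propo:Theta} shows that $\Theta$ is irreducible with inertia subgroup exactly $I:=I_{\GG_{m+1}}(\sigma)$, the Clifford induction theorem (Isaacs, Theorem~6.11) yields that this induced character is irreducible.

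For the identification with $\varphi$, the approach is through the Clifford correspondence relative to $\sigma$. Because $\sigma$ was chosen as an irreducible constituent of $\res^{\GG_{m+1}}_{\Gamma^1}(\varphi)$, there is a unique irreducible $\Psi\in\irr(I)$ lying above $\sigma$ with $\varphi=\ind_I^{\GG_{m+1}}(\Psi)$. Since $I=\msf{C}_{m+1}(Y)\cdot\Gamma^1$, the quotient $I/\Gamma^1$ is a quotient of the abelian group $\msf{C}_{m+1}(Y)$, whence the Clifford multiplicity is forced to be one; thus both $\Psi$ and $\Theta$ are extensions of $\sigma$ to $I$ (indeed $\Theta|_{\Gamma^1}=\sigma$ was established in the proof of Proposition~\ref{propo:Theta}). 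By Gallagher's theorem, one has $\Psi=\beta\cdot\Theta$ for a unique linear character $\beta$ of $I/\Gamma^1$, and the remaining task is to prove $\beta=1$.

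The key point--and the main obstacle--is the rigidity provided by the chosen character $\chi\in\irr(\msf{C}_{m+1}(Y))$ lying under $\varphi$. By construction of $\hat\vartheta$ as the gluing of $\chi$ and $\vartheta''$, the character $\Theta$ lies above $\chi$; on the other hand $\Psi$, being a constituent of $\varphi|_I$, also lies above $\chi$. Since $\beta$ is completely determined by its restriction to $\msf{C}_{m+1}(Y)$ (as it is trivial on $\Gamma^1$), I plan to compare the $\chi$-components of $\Theta|_{\msf{C}_{m+1}(Y)}$ and $\Psi|_{\msf{C}_{m+1}(Y)}$ via a Mackey decomposition of $\Theta|_{\msf{C}_{m+1}(Y)}=\res^I_{\msf{C}_{m+1}(Y)}\ind^I_{\msf{C}_{m+1}(Y)J_\theta}(\hat\vartheta)$; the commutator relation $[\msf{C}_{m+1}(Y),\Gamma^1]\subseteq J_\theta$ (which comes from $\msf{C}_{m+1}(Y)\cap\Gamma^1\subseteq R_\theta\subseteq J_\theta$) should force the distinct Mackey pieces to be twists of $\chi$ by pairwise distinct characters of $\msf{C}_{m+1}(Y)/(\msf{C}_{m+1}(Y)\cap\Gamma^1)$. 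Combined with Gallagher, this will identify $\beta$ with the trivial character and give $\Psi=\Theta$.

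Granting $\Psi=\Theta$, we conclude $\varphi=\ind_I^{\GG_{m+1}}(\Theta)=\ind_{\msf{C}_{m+1}(Y)J_\theta}^{\GG_{m+1}}(\hat\vartheta)$ as characters of $\GG_{m+1}$. Since $\varphi$ has level $m$ and so factors through $\GG_{m+1}$, inflating both sides to $G$ proves that the pullback of the induced character is isomorphic to $\varphi$, completing the argument.
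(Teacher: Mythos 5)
Your proposal diverges from the paper's (implicit) argument and, as written, does not close. The paper's intended proof is much shorter: induction in stages gives
\[\ind_{\msf{C}_{m+1}(Y)J_\theta}^{\GG_{m+1}}(\hat\vartheta)=\ind_{I_{\GG_{m+1}}(\sigma)}^{\GG_{m+1}}(\Theta),\]
which is irreducible by Proposition~\ref{propo:Theta} together with \cite[Theorem~6.11]{Isaacs}, exactly as you observe. But for the identification with $\varphi$ the paper does not go through the Clifford correspondent at all. The relevant point is that $\hat\vartheta$ itself lies \emph{under} $\varphi$, because it is glued from $\chi$ (chosen under $\varphi$) and $\vartheta''$ (whose restriction $\vartheta'$ is the unique extension of $\theta$ to $\msf{R}_\theta$ corresponding to $\sigma$, hence again under $\varphi$), compatibly on $\msf{C}_{m+1}(Y)\cap J_\theta\subseteq R_\theta$. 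Once $\hat\vartheta$ is known to be a constituent of $\res^{\GG_{m+1}}_{\msf{C}_{m+1}(Y)J_\theta}(\varphi)$, Frobenius reciprocity forces $\varphi$ to appear in the (irreducible) induced character, and equality follows for free. Your route through Gallagher and a Mackey computation is therefore doing extra work that the construction already built in.

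There are two genuine gaps in the route you chose. First, the assertion that ``$\Psi$, being a constituent of $\varphi|_I$, also lies above $\chi$'' does not follow: $\varphi|_I$ decomposes as $\Psi$ plus constituents whose restriction to $\Gamma^1$ misses $\sigma$, and nothing rules out that the $\chi$-isotypic part of $\varphi|_{\msf{C}_{m+1}(Y)}$ lives entirely in those other constituents. This is the pivot of your argument and it needs an actual proof; the proposed Mackey decomposition of $\res^I_{\msf{C}_{m+1}(Y)}\Theta$ only controls $\Theta$, not $\Psi$, so even carrying it out would not by itself supply the missing fact. Second, ``the quotient $I/\Gamma^1$ is abelian, whence the Clifford multiplicity is forced to be one'' is not a valid inference on its own; abelian quotient does not preclude a nontrivial cocycle. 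What you actually need (and do have) is that $\sigma$ \emph{extends} to $I$ — namely $\Theta|_{\Gamma^1}=\sigma$ from the proof of Proposition~\ref{propo:Theta} — and only then does Gallagher give multiplicity one and the twist description $\Psi=\beta\cdot\Theta$. If you want to retain your route, you should state the extendability explicitly, prove that $\chi$ lives under $\Psi$ (e.g.\ by first showing $\hat\vartheta$ lives under $\varphi$, at which point you are already done by Frobenius reciprocity), and then finish the Mackey calculation rather than leaving it as a plan.
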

In particular, we have that
\begin{align*}\varphi(1)&=\abs{\GG_{m+1}:I_{\GG_{m+1}}(\sigma)}\cdot \dim(\Theta)=\abs{G:\St_G^{-r}(Y)}\cdot q^{\frac{\ell-1}{2}}\\
&=\frac{q^\ell-1}{\iota\cdot(q-1)}\cdot q^{\frac{\ell-1}{2}(m-1)}%&\text{if }\ell\ne 2,
%\color{red}q^{4k+1}&\text{if }\ell=2,\:\eta=1\color{black}
%\end{cases}\\&=
=d_{m}^\ell(q).
\end{align*}

The construction and proof of Theorem~\ref{theo:proveseverything} is now complete for $\ell>2$.

\subsubsection*{Quaternion Algebras}
The argument applied in the previous case fails in the quaternion case, as in the setting where $\ell=2$ and $r$ is even, the image of $\CC_G(Y)$ modulo $1+\PP$ is the group $\SL_1(\dbF_{q^2}\mid\dbF_q)$. In particular, the conjugation action of $\CC_G(Y)$ on $\Gamma^1$ induces a transitive action on the set of lines in $\mfr{f}=\dbF_{q^2}$. It follows that no maximal isotropic subspace of $\mfr{f}$ is fixed by $\CC_G(Y)$. We supplement the argument of the previous case with a different argument, which proves the degree formula, but does not prove the monomiality of $\varphi$.

As before we pick a constituent $\sigma\in\irr(\Gamma^1)$ of $\varphi$ and $\vartheta\in \irr(\Gamma^2)$ a constituent of $\sigma$. As the radical of the form $\beta_\theta$ is trivial (Proposition~\ref{propo:radicals}(1)), we have that $\sigma=\ind_{J_\theta}^{\Gamma^1}(\vartheta')$, for any $J_\theta$ as in Section~\ref{subsect:heisenberg} and any extension $\vartheta'\in\irr(J_\theta)$ of $\vartheta$. Also, we have that $I_{G}(\sigma)=I_G(\vartheta)$. Fix such a subgroup $J_\theta\subseteq \Gamma^1$ and extension $\vartheta'\in\irr( J_\theta)$.
%{\stackrel{{\tiny{\it def.}}}{=}}

Let $P(Y):=\CC_{G^1}(Y)$ be the Sylow pro-$p$ subgroup of $\CC_G(Y)$. Note that by its definition, %$\CC_G(Y)/P(Y)\cong\SL_1(\dbF_{q^2}\mid\dbF_q)$ is cyclic, and that 
$P(Y)$ acts trivially on $\mfr{f}$. Our diagram in this case appears in Figure~\ref{figure:4k+1}, in which $\msf{C}_{m+1}(Y)$ and $\msf{P}_{m+1}(Y)$ are the images of $\CC_G(Y)$ and $P(Y)$ in $\GG_{m+1}$, and $\mfr{j}_\theta$ is a maximal isotropic subspace (i.e. a line) in $\mfr{f}$.

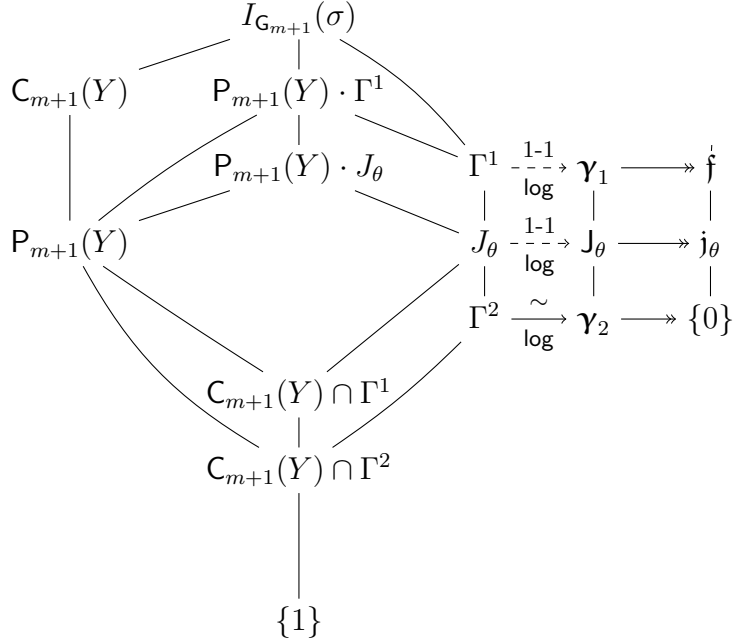
\begin{figure}[h]
\begin{tikzpicture}[description/.style={fill=white,inner sep=2pt}]
\matrix (m) [matrix of math nodes, row sep=1em,
column sep=1.8em, text height=1.5ex, text depth=0.25ex]
{%&\GG_{m+1}&&\g_{m+1}\\~\\
&I_{\GG_{m+1}}(\sigma)\\
\msf{C}_{m+1}(Y)&\msf{P}_{m+1}(Y)\cdot \Gamma^1\\
&\msf{P}_{m+1}(Y)\cdot J_\theta&\Gamma^1&\ggama_1&\mfr{f}\\
\msf{P}_{m+1}(Y)&&J_\theta&\msf{J}_\theta&\mfr{j}_\theta\\
&&\Gamma^2&\ggama_2&\set{0}\\
&\msf{C}_{m+1}(Y)\cap \Gamma^1&\\
&\msf{C}_{m+1}(Y)\cap \Gamma^2\\~\\
&\set{1}&&&\\}; 

\path[-,font=\scriptsize]

%(m-1-2) edge[thin] node[auto] {} (m-3-2)
%(m-1-4) edge[thin] node[auto] {} (m-5-4)
(m-1-2) edge[thin] node[auto] {} (m-2-2)
		edge[thin, bend left=10] node[auto] {} (m-3-3)
(m-2-2) edge[thin] node[auto] {} (m-3-3)
		edge[thin] node[auto] {} (m-3-2)
(m-3-2) edge[thin] node[auto] {} (m-4-3)
(m-3-3) edge[thin] node[auto] {} (m-4-3)
(m-4-1)	edge[bend right=1,thin] node[auto] {} (m-6-2)
		edge[bend right=15,thin] node[auto] {} (m-7-2)
		edge[thin] node[auto] {} (m-3-2)
		edge[thin, bend left=5] node[auto] {} (m-2-2)
		edge[thin] node[auto] {} (m-2-1)
(m-2-1) edge[thin] node[auto] {} (m-1-2)
(m-5-3) edge[thin] node[auto] {} (m-4-3)
(m-7-2) edge[thin, bend right=5] node[auto] {} (m-5-3)
		edge[thin] node[auto] {} (m-9-2)
(m-6-2) edge[thin] node[auto] {} (m-4-3)
		edge[thin] node[auto] {} (m-7-2)			
(m-3-4) edge[thin] node[auto] {} (m-4-4)
(m-5-4) edge[thin] node[auto] {} (m-4-4)	

(m-3-5) edge[thin] node[auto] {} (m-2-5)
		edge[thin] node[auto] {} (m-4-5)
(m-5-5) edge[thin] node[auto] {} (m-4-5)		
;

\path[->,font=\scriptsize]
(m-3-3) edge[dashed] node[below] {$\llog$} node[above] {1-1} (m-3-4)
(m-4-3)	edge[dashed] node[below] {$\llog$} node[above] {1-1} (m-4-4)
(m-5-3) edge[thin] node[below] {$\llog$} node[above] {$\sim$} (m-5-4)	
;
\path[->>,font=\scriptsize]
(m-3-4) edge[thin] node[auto] {} (m-3-5)
(m-4-4) edge[thin] node[auto] {} (m-4-5)
(m-5-4) edge[thin] node[auto] {} (m-5-5)
;
\end{tikzpicture}\caption{Diagram for even level, not divisible by $4$, $\ell=2$}\label{figure:4k+1}\end{figure}

Let $\vartheta'\in\irr(J_\theta)$ be some extension of $\vartheta$ to $J_\theta$. As the group $\msf{P}_{m+1}(Y)$ is abelian and normalizes $J_\theta$, we can extend $\vartheta'$ to a linear character $\hat{\vartheta}$ of $\msf{P}_{m+1}(Y)\cdot J_\theta$ and which lies under $\varphi$. Repeating the proof of Proposition~\ref{propo:Theta}(1) verbatim, one shows that the induced character $\Theta:=\ind_{\msf{P}_{m+1}(Y) J_\theta}^{\msf{P}_{m+1}\Gamma^1}(\hat{\vartheta})$ is irreducible, of degree $q=\abs{\mfr{j}_\theta}$ and lies under $\varphi$. Additionally, by the bijection in Proposition~\ref{propo:lift}(2) and by the proof of Proposition~\ref{propo:Theta}(2), one obtains that the inertia subgroup of $\Theta$ in $\GG_{m+1}$ is $I_{\GG_{m+1}}(\sigma)$.

Finally, by Lemma~\ref{lem:conjclassGtoG1}, we know that \[\msf{C}_{m+1}\Gamma^1/\msf{P}_{m+1}(Y)\Gamma^1= \St_{G}^{-r}(Y)/\St_{G^1}^{-r}(Y)\cong\SL_1(\dbF_{q^2}\mid\dbF_q)\]
is a finite cyclic group. Hence, by \cite[Corollary~11.22]{Isaacs}, the character $\Theta$ extends to a character $\hat{\Theta}$ of $I_{\GG_{m+1}}(\sigma)$. Inducing further to $\GG_{m+1}$, we have that the pullback of $\ind_{I_{\GG_{m+1}}(\sigma)}^{\GG_{m+1}}(\hat{\Theta})$ to $G$ is equal to $\varphi$ and 
\[\varphi(1)=q\cdot\abs{G_{m+1}:I_{\GG_{m+1}}(Y)}=q^{\frac{m}{2}}=d_{m}^2(q).\]

Theorem~\ref{theo:proveseverything} is now proved.\qed

\section{The Representation Zeta Function of $\SL_1(D)$}\label{section:zeta}

At last, we are ready to complete the proof of Theorems~\ref{theo:zeta} and \ref{theo:leveldimension}.

Note that Theorem~\ref{theo:zeta} in fact follows from Theorem~\ref{theo:leveldimension}. Indeed, as can be verified by a formal comparison of the formula for representation zeta function suggested in Theorem~\ref{theo:zeta} and the generating function $\sum_{m=0}^\infty a_m^\ell(q)d^\ell_m(q)^{-s}$, we have that
\[\zeta_{\SL_1(D)}(s)=\frac{\frac{q^\ell-1}{q-1}(1-q^{-\binom{\ell}{2}s})+\left(\frac{q^\ell-1}{\iota_\ell(q)\cdot(q-1)}\right)^{-s}\cdot\iota_\ell(q)^2\cdot (q-1)\cdot\left(\sum_{\lambda=0}^{\ell-2}q^{\lambda(1-\frac{\ell-1}{2}\cdot s)}\right)}{1-q^{(\ell-1)-\binom{\ell}{2}s}}.\]
at any $s\in\dbC$ with $\Re(s)>\frac{2}{\ell}.$ Here $a_m^\ell(t),\:d_m^\ell(t)$ are the integer valued functions given in \eqref{equa:adefin} and \eqref{equa:ddefin}. This implies the equality of the two functions.

Also, recall that the fact that every irreducible character of $G$ of level $m$ is of dimension $d_m^\ell(q)$ has already been proved (Theorem~\ref{theo:proveseverything}). Thus, we are only left to prove that the set $\irr^m(G)$, of irreducible characters of level $m$, consists of precisely $a_m^\ell(q)$ elements, for all $m\in\dbN_0$. This follows from a simple computation, which we outline here.

Note that for any $m\in\dbN_0$ the union $\bigsqcup_{k=0}^{m}\irr^k(G)$ can be naturally identified with the set $\irr\left(\GG_{m+1}\right).$ 
As $\GG_1\cong{\SL_1(\dbF_{q^\ell}\mid\dbF_q)}$ is an abelian group of order $\frac{q^\ell-1}{q-1}$, it is clear that $\abs{\irr^0(G)}=\frac{q^\ell-1}{q-1}=a_0^\ell(q)$.

For characters of positive level, note that for any $r\in\dbN$ the reduced norm induces a surjection $\NNrd_r:\OO^\times/(1+\PP^r)\project\oo^\times/(1+\pp^{\ceil{\frac{r}{\ell}}})$ and that the natural isomorphism $\GG_r=G/G^r\cong G({1+\PP}^r)/({1+\PP}^r)$ identifies $\GG_r$ with $\ker(\NNrd_r)$ (see e.g. \cite[\S~3]{riehm}). Thus, \[\abs{\GG_r}=\frac{\abs{{\OO^\times}:({1+\PP}^r)}}{\abs{\oo^\times:(1+\pp^{\ceil{r/\ell}})}}=\frac{q^{\ell}-1}{q-1}\cdot q^{(r-1)\ell-\ceil{\frac{r}{\ell}}+1},\quad\text{for all } r\ge 1.\]
Moreover, for any $m\in\dbN$ we have that
\begin{align*}
\frac{q^\ell-1}{q-1}q^{m\ell-\ceil{\frac{m+1}{\ell}}+1}&=\abs{\GG_{m+1}}=\zeta_{\GG_{m+1}}(-2)&\\
&=\sum_{k=0}^m\sum_{\rho\in \irr^{k}(G)}\dim(\rho)^2\\%&=\sum_{\rho\in \irr^{\le m-1}(G)}\dim(\rho)^2+\sum_{\rho\in\irr^m(G)}\dim(\rho)^2\\
&=\zeta_{\GG_{m}}(-2)+\abs{\irr^m(G)}\cdot d_m^\ell(q)^2&\text{ (by Theorem~\ref{theo:proveseverything})}\\
&=\frac{q^{\ell}-1}{q-1}q^{(m-1)\ell-\ceil{\frac{m}{\ell}}+1}+\abs{\irr^m(G)}\cdot d^\ell_m(q)^2\end{align*}
The fact that $\abs{\irr^m(G)}=a_m^\ell(q)$ now follows by direct computation, noticing that $\ceil{\frac{m+1}{\ell}}=\ceil{\frac{m}{\ell}}$ whenever ${\ell\nmid m}$.
\qed

\bibliographystyle{siam}
\bibliography{Characters}
\end{document}